\documentclass{amsart}

\usepackage[english]{babel}
\usepackage[utf8x]{inputenc}
\usepackage{amsmath,amsthm,amssymb,amsfonts}
\usepackage{verbatim}
\usepackage{graphicx}
\usepackage[colorinlistoftodos]{todonotes}
\usepackage{stmaryrd} 
\usepackage{hyperref}

\usepackage{setspace} 

\usepackage{tikz}
\usepackage{tkz-berge, tkz-graph}
\tikzset{vertex/.style={circle,draw,fill,inner sep=0pt,minimum size=1mm}}

\theoremstyle{plain}
\newtheorem{thm}{Theorem}
\newtheorem{lem}[thm]{Lemma}

\newtheorem{cor}[thm]{Corollary}
\newtheorem{conj}[thm]{Conjecture}

\theoremstyle{definition}
\newtheorem{definition}[thm]{Definition}
\newtheorem{exl}[thm]{Example}

\numberwithin{thm}{section}

\newcommand{\adj}{\leftrightarrow}
\newcommand{\adjeq}{\leftrightarroweq}
\DeclareMathOperator{\id}{id}
\def\Z{{\mathbb Z}}

\title{Homotopy equivalence in finite digital images}

\author[J. Haarmann]{Jason Haarmann}
\address{Dept. of Mathematics, Eastern Illinois University, Charleston IL, 61920, USA} 
\email{jhaarmann@eiu.edu}
\author[M. Murphy]{Meg P. Murphy}
\address{Dept. of Mathematics, University of North Carolina at Asheville, Asheville NC, 28804, USA}
\email{meg.page.murphy@gmail.com}
\author[C. Peters]{Casey S. Peters}
\address{Mathematics Department, University of Redlands, Redlands CA, 92373, USA}
\email{casey\_peters@redlands.edu}
\author[P. C. Staecker]{P. Christopher Staecker}
\address{Dept. of Mathematics, Fairfield University, Fairfield CT, 06824, USA}
\email{cstaecker@fairfield.edu}
\begin{document}

\begin{abstract}
For digital images, there is an established homotopy equivalence relation which parallels that of classical topology. Many classical homotopy equivalence invariants, such as the Euler characteristic and the homology groups, do not remain invariants in the digital setting. This paper develops a numerical digital homotopy invariant and begins to catalog all possible connected digital images on a small number of points, up to homotopy equivalence. 
\end{abstract}

\maketitle

\section{Introduction}
The field of digital topology has been developed over the past few decades, motivated by computer graphics, image processing, and other applications. See \cite{rosenfield73} for an early foundational work. Three main settings for studying digital objects have emerged: the Khalimsky topology on $\Z^n$ \cite{khalimsky90, slapal13},  geometric realizations of subsets of $\Z^n$ into $\mathbb R^n$ \cite{bykov99}, and graph-like adjacency structures on discrete sets \cite{boxer99,boxer11,ege14} typically called digital images.
This paper works in the final setting but de-emphasizes the traditional focus on spaces given by subsets of $\Z^n$ with various ``rectangular'' adjacency grids. 

As in classical topology, invariants are of particular interest in the study of digital spaces. Homeomorphism invariants, such as the fundamental group, homology groups, and the Euler characteristic are studied in \cite{boxer99, boxer11, ege14}.  
But homeomorphisms of digital images are very strict (for example, they must preserve the number of points), so it is natural to expand this study to homotopy equivalences, which allow for more freedom. In classical topology, the Euler characteristic and homology groups are also homotopy equivalence invariants, but a counterexample is given in \cite{ege14} showing this does not hold for digital images. 

Digital homotopy equivalence often behaves counterintuitively when compared to classical topology. For example, we will see that all cycles of fewer than five points are homotopy equivalent to a point, while any two cycles of different lengths greater than or equal to 5 will not be homotopy equivalent to one another (this result also appears in \cite{boxer05}). The aim of this paper is to develop tools and a numerical invariant to capture properties which are preserved by homotopy equivalences and to use these tools to classify digital images by their homotopy types. Unlike several other existing invariants for digital images, ours is a ``true'' digital invariant: it is not analogous to any invariant in classical topology.

Homotopy equivalence of digital images is defined in the obvious way as in \cite{boxer05}. While the equivalence relation has been mentioned in several papers, no numerical invariants seem to have been developed (other than the number of connected components, which is easily seen to be an  invariant). We note that there is an established notion of homotopy equivalence for graphs described in \cite{chen01}; however, the homotopy equivalence relation used in that theory is not equivalent to the one used in digital topology. A catalog of ``irreducible'' graphs similar to our own, but using this other homotopy notion, appears in \cite{knill12}. Our homotopy relation more closely parallels the homotopy equivalence of classical topology.

The paper is organized as follows: In Section \ref{prelims} we present the necessary background. Then in Section \ref{Lminvariantsec} we propose a ``loop-counting'' homotopy equivalence invariant for digital images and compute this number for some digital images. In Section \ref{catalog} we catalog all connected digital images on 7 and fewer points, up to homotopy equivalence, and give some partial results for images on 8 points.
In Section \ref{question} we show how our techniques answer a question posed by Boxer in \cite{boxer05}. 

This paper is the product of a summer REU project at Fairfield University supported the National Science Foundation (DMS-1358454). The authors would like to thank Laurence Boxer for many helpful corrections and suggestions. 

\section{Preliminaries}\label{prelims}

A \emph{digital image} is a set of points $X$, with some ``adjacency relation'' $\kappa$, which is symmetric and antireflexive. Typically in digital topology the set $X$ is a subset of $\Z^n$ and the relation is based on some notion of adjacency of points in the integer lattice. We will not be considering our sets as points in the integer lattice, but allow more general sets and adjacency relations. 
Digital images not based on the integer lattice can be used (see  \cite{gray71}) to model hexagonal pixel tilings of $\mathbb R^2$
or other nonstandard pixel arrangements in $\mathbb R^n$. Most of the motivation and results of this paper concern finite images, but our machinery usually applies even for infinite images. 

The standard notation for such a digital image is $(X, \kappa)$. With the adjacency relation giving us a notion of closeness, some familiar constructions from classical topology can be made in digital images. 

\subsection{Continuous functions and homotopies}
We will use the notation $x \adj_\kappa y$ when $x$ is adjacent to $y$, and $x \adjeq_\kappa y$ when $x$ is adjacent or equal to $y$. The particular adjacency relation will usually be clear from context, and in this case we will omit the subscript.
\begin{definition} \label{def:continuity}
Let $(X, \kappa_1), (Y, \kappa_2)$ be digital images. A function $f: (X, \kappa_1) \rightarrow (Y, \kappa_2)$ is $(\kappa_1, \kappa_2)$-continuous iff whenever $x \adj_{\kappa_1} y \in \kappa_1$ then $f(x) \adjeq_{\kappa_2} f(y) \in (Y, \kappa_2)$. 
\end{definition}
For simplicity of notation, we will generally not need to reference the adjacency relation specifically. Thus we typically will denote a digital image simply by $X$, and when the appropriate adjacency relations are clear we simply call a function between digital images ``continuous''. We will refer to digital images as simply ``images'', and all functions discussed will be assumed to be continuous unless otherwise noted.

An image $X$ can be viewed naturally as a simple graph, where the vertices are points of $X$ and an edge connects two vertices $x, x' \in X$ whenever $x\adj x'$. This viewpoint is helpful for our results, and we will use graph-theoretic terminology whenever convenient. Usually when describing specific examples of images we will simply draw the graph rather than describing the image abstractly.

\begin{definition}
An \emph{isomorphism} between two digital images $X$ and $Y$ is a continuous bijection $f:X\to Y$ whose inverse is also continuous. Equivalently, $f:X\to Y$ is an isomorphism of digital images when it is an isomorphism of graphs.
\end{definition}

Isomorphisms of digital images were defined by Boxer in \cite{boxer94}, where he referred to them as \emph{digital homeomorphisms}. 
It turns out that for selfmaps of finite digital images, continuity of the inverse is automatic:
\begin{lem}\label{bijhomeo}
If $X$ is a finite image and $f:X\to X$ is a continuous bijection, then $f$ is an isomorphism.
\end{lem}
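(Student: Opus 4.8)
The plan is to translate the statement into the graph-theoretic language introduced just above and then run a pigeonhole argument on the (finite) edge set. Write $E = \{\{x,y\} : x \adj y\}$ for the edge set of $X$; since $X$ is finite, so is $E$. The goal is to show that $f^{-1}$ is continuous.

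First I would check that $f$ carries edges to edges. If $x \adj y$, continuity gives $f(x) \adjeq f(y)$; since $f$ is injective and $x \ne y$ we have $f(x) \ne f(y)$, so in fact $f(x) \adj f(y)$. Hence $f$ induces a well-defined map $f_* : E \to E$ by $f_*(\{x,y\}) = \{f(x),f(y)\}$. This map is injective: if $\{f(x),f(y)\} = \{f(x'),f(y')\}$ then, applying the set-bijection $f^{-1}$, we get $\{x,y\} = \{x',y'\}$. Note that so far only the bijectivity of $f$ has been used --- to keep $f$ from collapsing an edge to a point and to make $f_*$ injective.

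Now the finiteness hypothesis enters: an injective self-map of the finite set $E$ is automatically surjective, so $f_*$ is onto. Finally I would convert this back into continuity of $f^{-1}$: given $u \adj v$ in $X$, choose $\{x,y\} \in E$ with $\{f(x),f(y)\} = \{u,v\}$; then $\{f^{-1}(u),f^{-1}(v)\} = \{x,y\} \in E$, so $f^{-1}(u) \adj f^{-1}(v)$, and in particular $f^{-1}(u) \adjeq f^{-1}(v)$. Thus $f^{-1}$ is continuous and $f$ is an isomorphism.

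There is no real obstacle here beyond arranging the right bookkeeping; the entire content sits in the pigeonhole step, and correspondingly the lemma fails for infinite $X$. For instance, take $X = \Z$ with $n$ adjacent to $m$ exactly when $\{n,m\} = \{2k, 2k+1\}$ for some integer $k \ge 0$; then the shift $f(n) = n+2$ is a continuous bijection, but $f^{-1}$ is not continuous, since $0 \adj 1$ while $f^{-1}(0) = -2$ and $f^{-1}(1) = -1$ are non-adjacent.
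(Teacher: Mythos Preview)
Your proof is correct and follows essentially the same approach as the paper: both arguments show that the induced self-map on the finite edge set is injective (hence surjective by pigeonhole), with the only cosmetic difference being that the paper works with ordered pairs via $f^2:X\times X\to X\times X$ while you work with unordered pairs via $f_*$. Your infinite counterexample is a nice addition not present in the paper, which instead notes that the lemma fails for continuous bijections between two \emph{different} finite images.
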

\begin{proof}
It suffices to show that the inverse function $f^{-1}:X \to X$ is continuous. Equivalently, we must show that for any $x,x'\in X$, if $f(x) \adj f(x')$ then $x \adj x'$. 

Consider the cartesian product $X^2 = X\times X$, and let $E\subset X^2$ be the ``edge set'' $E = \{ (x,y) \in X^2 \mid x \adj y\}$. Let $f^2:X^2 \to X^2$ be the map $f^2(x,y) = (f(x),f(y))$. Then $f^2$ is a bijection, and continuity of $f$ means that $f^2(E) \subseteq E$. But since $f^2$ is a bijection and $X^2$ is finite this means that $f^2(E)=E$. 

Now if $f(x)\adj f(x')$ then $(f(x),f(x')) \in E$ and so $(x,x')\in E$ since $f^2$ is a bijection on $E$, and thus $x\adj x'$ as desired.
\end{proof}
The authors would like to thank Nathaniel Eldredge for suggesting the above argument. The  lemma fails to be true if $f$ is taken to be a bijection between two different images. For example if $X$ is a disconnected image of 2 points and $Y$ is a connected image of 2 points, then a bijection from $X$ to $Y$ will be continuous but its inverse will not be.

Let $[a, b]_\Z = \{z \in \Z \mid a \leq z \leq b\}$ denote the \emph{digital interval} from $a$ to $b$. The standard adjacency relation for an interval is called $2$-adjacency, indicating that each integer in the interval is adjacent to exactly the number preceding and following it. A $(2,\kappa)$-continuous function $p: [0,k]_\Z \rightarrow (X,\kappa)$ is called a \emph{path} from $p(0)$ to $p(k)$.

\begin{definition}\label{connected}
An image $X$ is \emph{connected} if for every $x, y \in X$ there exists a path $p: [0,k]_\Z \rightarrow X$ from $x$ to $y$. Given $x\in X$, the \emph{component of $x$} is the set of all $y\in X$ such that there is a path from $x$ to $y$. 
\end{definition}

Thus in the digital setting, connected and path-connected are equivalent. Homotopies and homotopy equivalence are defined in the natural way.

\begin{definition}\label{homotopy}
Let $X, Y$ be images and let $f,g: X \rightarrow Y$ be continous functions. The maps $f$ and $g$ are \emph{homotopic} if there exists $k\in \Z$ such that there is a map $H: X \times [0, k]_\Z \rightarrow Y$ with $H(x, 0) = f(x)$ and $ H(x, k) = g(x)$ for all $x\in X$, for any fixed $t\in [0, k]_Z$, the map $H(\cdot, t): X \rightarrow Y$ is continuous, and for any fixed $x\in X$ the map $H(x, \cdot): [0, k]_\Z \rightarrow Y$ is continuous. 
\end{definition}
The map $H$ is called the \emph{homotopy}, and we write $f\simeq g$ when $f$ is homotopic to $g$. As in classical topology, homotopy of maps is an equivalence relation (though the transitivity will typically require changing the length of the digital interval $[0,k]_\Z$). In the case where $f \simeq g$ by some homotopy with $k=1$, we say that $f$ is \emph{homotopic to $g$ in one step}.

We denote the identity map on $X$ as $\id_X$, which is always continuous.

\begin{definition}\label{homotopyequivalence}
The images $X$ and $Y$ are \emph{homotopy equivalent} if there exist two continuous functions $f: X\rightarrow Y$ and $g: Y\rightarrow X$ such that $f\circ g \simeq \id_Y$ and $g\circ f \simeq \id_X$.
\end{definition}

It is shown in \cite{boxer99} that homotopy equivalence is an equivalence relation. 

\begin{definition}
A finite image $X$ is \emph{reducible} when it is homotopy equivalent to an image of fewer points. Otherwise we say $X$ is \emph{irreducible}.
\end{definition}

The following is a useful characterization of reducible images.

\begin{lem}\label{nonsurj}
A finite image $X$ is reducible if and only if $\id_X$ is homotopic to a nonsurjective map.
\end{lem}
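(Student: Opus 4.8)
The statement splits into two implications, and the forward one is essentially free. If $X$ is reducible, choose an image $Y$ with $|Y|<|X|$ together with maps $f\colon X\to Y$ and $g\colon Y\to X$ such that $g\circ f\simeq\id_X$. Then $r:=g\circ f$ is a selfmap of $X$ with $r\simeq\id_X$, and its image lies in $g(Y)$, so $|r(X)|\le|Y|<|X|$; hence $r$ is not surjective. So I would dispose of this direction first.

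For the converse, suppose $\id_X\simeq r$ for some nonsurjective $r\colon X\to X$. The candidate ``smaller'' image is a subimage $A:=r(X)$ equipped with the adjacency inherited from $X$; since $r$ is nonsurjective, $|A|<|X|$. I want continuous maps $f\colon X\to A$ and $g\colon A\to X$ realizing a homotopy equivalence. The obvious try --- take $f=r$ with codomain restricted to $A$, and $g$ the inclusion --- gives $g\circ f=r\simeq\id_X$ immediately, but fails on the other side: $f\circ g=r|_A$ need not be $\id_A$, and there is no reason the given homotopy $\id_X\simeq r$ restricts to a homotopy taking place inside $A$. The fix is to first replace $r$ by an idempotent power of itself. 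Since $X$ is finite, the chain $r(X)\supseteq r^2(X)\supseteq\cdots$ stabilizes, so $r^n(X)=r^{2n}(X)$ for some $n\ge1$; writing $A:=r^n(X)$, the restriction $r^n|_A$ is a self-bijection of the finite set $A$, hence (by Lemma \ref{bijhomeo}) an isomorphism, so some further power $s:=r^{nj}$ satisfies $s|_A=\id_A$. This $s$ is idempotent, has $s(X)=A$, still satisfies $|A|=|r^n(X)|\le|r(X)|<|X|$, and is homotopic to $\id_X$ because $r\simeq\id_X$ implies $r^m\simeq\id_X$ for every $m$. Now $f:=s$ with codomain restricted to $A$, and $g:=$ the inclusion $A\hookrightarrow X$, do the job: $g\circ f=s\simeq\id_X$ and $f\circ g=s|_A=\id_A$, so $X$ is homotopy equivalent to $A$, which has fewer points.

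The only genuinely delicate points are bookkeeping ones that I would state explicitly: that a map into $A$ is continuous exactly when it is continuous as a map into $X$ (so that $f$ and $g$ above are continuous), and that homotopy of selfmaps is compatible with composition, so that $r\simeq\id_X$ propagates to $r^m\simeq\id_X$. This last fact is the crux that makes the idempotent-power trick legitimate, and it is where I would spend most of the care; everything else is routine.
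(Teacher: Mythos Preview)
Your proof is correct. The forward direction (reducible $\Rightarrow$ a nonsurjective map homotopic to $\id_X$) is the same as the paper's.

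For the converse, the paper takes precisely the shorter route you first consider and then reject: with $Y=f(X)$, it uses $f:X\to Y$ and the inclusion $i:Y\to X$ directly, asserting (with the two compositions evidently transposed in print) that $i\circ f=f\simeq\id_X$ and that the other composite equals $\id_Y$. You correctly identify the issue: the first claim is immediate, but $f\circ i=f|_Y$ need not equal $\id_Y$, and there is no reason the homotopy $\id_X\simeq f$ stays inside $Y$ when restricted. Your idempotent-power argument --- stabilize the descending image chain, then invoke Lemma~\ref{bijhomeo} to find a power $s=r^{nj}$ that restricts to the identity on $A=s(X)$ --- patches this cleanly. So your route is longer but fills exactly the gap the paper glosses over; what it buys you is a genuine retraction onto the subimage rather than an assertion of one.
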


\begin{proof}
Let $f:X\rightarrow X$ be a nonsurjective map homotopic to $\id_X$, and let $Y=f(X)$ so we may consider $f$ as a map $f:X\to Y$. If $i:Y\rightarrow X$ is the inclusion mapping, then $i\circ f= \id_Y$ and $f\circ i=f\simeq \id_X$. Thus $X$ is homotopy equivalent to $Y\subsetneq X$, making $X$ reducible.

For the converse implication of the lemma, let $X$ be reducible, so $X$ is homotopy equivalent to an image of fewer points $Z$. Therefore there exists $f:X \rightarrow Z$ and $g:Z \rightarrow X$ such that $g \circ f:X \rightarrow X$ is homotopic to $\id_X$. But we have
\[ \#g( f (X)) \le \#f(X) \le \#Z < \#X, \]
(where \# denotes cardinality), and so $g \circ f$ must be nonsurjective. 
\end{proof}

In fact a stronger version of the above is also true:
\begin{lem}\label{nonsurj1step}
A finite image $X$ is reducible if and only if $\id_X$ is homotopic in one step to a nonsurjective map.
\end{lem}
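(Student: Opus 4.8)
The plan is to strengthen Lemma \ref{nonsurj}. One direction is free: if $\id_X$ is homotopic in one step to a nonsurjective map, then in particular $\id_X$ is homotopic to a nonsurjective map, so $X$ is reducible by Lemma \ref{nonsurj}. So the whole content is the converse: assuming $X$ is reducible, produce a \emph{one-step} homotopy from $\id_X$ to a nonsurjective map.

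First I would invoke Lemma \ref{nonsurj} to obtain a homotopy $H : X \times [0,k]_\Z \to X$ with $H(\cdot,0) = \id_X$ and $H(\cdot,k) = f$ for some nonsurjective $f$. The idea is to walk along this homotopy one time-step at a time and find the first place where surjectivity is lost. Precisely, let $f_t = H(\cdot,t)$, so $f_0 = \id_X$ is surjective and $f_k = f$ is not. Let $t$ be the smallest index such that $f_t$ is nonsurjective; then $t \ge 1$ and $f_{t-1}$ is surjective. Since $X$ is finite, a surjective selfmap is a bijection, so $f_{t-1}$ is a continuous bijection, hence an isomorphism of $X$ by Lemma \ref{bijhomeo}; in particular $f_{t-1}^{-1}$ is continuous. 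Now consider the map $g = f_t \circ f_{t-1}^{-1} : X \to X$. It is continuous (a composite of continuous maps), and it is nonsurjective because $f_t$ is nonsurjective and $f_{t-1}^{-1}$ is onto.

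The remaining point is that $g$ is homotopic to $\id_X$ in one step. This follows because $f_{t-1}$ and $f_t$ are homotopic in one step via the restriction $H(\cdot, s)$ for $s \in [t-1,t]_\Z$ (reindexed to $[0,1]_\Z$): that is, the map $K(x,s) = H(x, t-1+s)$ is a one-step homotopy from $f_{t-1}$ to $f_t$. Precomposing this homotopy with the isomorphism $f_{t-1}^{-1}$ in the $X$-variable, i.e. setting $K'(x,s) = K(f_{t-1}^{-1}(x), s)$, gives a map $K' : X \times [0,1]_\Z \to X$; for each fixed $s$ the slice $K'(\cdot, s) = K(\cdot,s)\circ f_{t-1}^{-1}$ is continuous, and for each fixed $x$ the slice $K'(x,\cdot)$ is continuous since it equals $K(f_{t-1}^{-1}(x),\cdot)$. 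Finally $K'(x,0) = f_{t-1}(f_{t-1}^{-1}(x)) = x$ and $K'(x,1) = f_t(f_{t-1}^{-1}(x)) = g(x)$, so $K'$ is a one-step homotopy from $\id_X$ to the nonsurjective map $g$, as required.

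The only step requiring any care is the reindexing and precomposition that produces $K'$: one must check that precomposing a homotopy by a fixed isomorphism on the source still satisfies both continuity conditions in Definition \ref{homotopy}, which is immediate since composition with a fixed continuous map preserves continuity in each variable separately. I do not anticipate a genuine obstacle here; the essential idea is simply ``find the first time step where surjectivity breaks, and use that the previous map is invertible.''
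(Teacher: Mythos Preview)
Your proof is correct and is essentially the same as the paper's: both find the first time step at which surjectivity fails, invoke Lemma \ref{bijhomeo} to invert the preceding (surjective, hence bijective) stage, and precompose to pull the one-step transition back to the identity. The paper phrases the choice of transition point as a ``without loss of generality'' truncation of the homotopy, but the content is identical.
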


\begin{proof}
If $\id_X$ is homotopic in one step to a nonsurjective map it follows immediately from Lemma $\ref{nonsurj}$ that $X$ is reducible. So we need only show that if $X$ is reducible then $\id_X$ is homotopic in one step to a nonsurjection.

If $X$ is reducible then $\id_X$ is homotopic to a nonsurjective map $f$. Let $H:X \times [0,k] \rightarrow X$ be the homotopy from $\id_X$ to $f$. Without loss of generality assume that $H(x,t)$ is surjective for all $t<k$ and not surjective for $t=k$.
Let $g:X\to X$ be the surjective map $g(x) = H(x,k-1)$, so $g$ is homotopic to $f$ in one step. Since $g$ is surjective and continuous it is a bijection, and thus an isomorphism by Lemma \ref{bijhomeo}.

Consider then the homotopy $H':X \times [0,1] \rightarrow X$ where
\[
H'(x,0)=\id_X \quad  H'(x,1)=f(g^{-1}(x)). 
\]
The second map $H'(x,1)$ is continuous because it is a composition of continuous maps, and the time-continuity of the homotopy $H'$ is satisfied because $g$ is homotopic to $f$ in one step. 
Since $H'(x,0)=\id_X$ and $H'(x,1)$ is nonsurjective (because $f$ is nonsurjective), $\id_X$ is homotopic to a nonsurjective map in one step.
\end{proof}

We can use the above to show that a homotopy equivalence of irreducible images must actually provide an isomorphism between the images.

\begin{thm}\label{homeoequiv}
Let $X$ and $Y$ be finite irreducible images. Then $X$ is homotopy equivalent to $Y$ if and only if $X$ is isomorphic to $Y$.
\end{thm}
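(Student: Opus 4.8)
The plan is to prove the nontrivial direction: if $X$ and $Y$ are finite irreducible images with a homotopy equivalence between them, then that equivalence is actually an isomorphism. (The reverse direction is immediate, since an isomorphism is a homotopy equivalence.) So suppose we have continuous maps $f:X\to Y$ and $g:Y\to X$ with $g\circ f \simeq \id_X$ and $f\circ g \simeq \id_Y$. The first step is to observe that $g\circ f$ is surjective: otherwise $\id_X$ would be homotopic to a nonsurjective map, and Lemma \ref{nonsurj} would make $X$ reducible, contrary to hypothesis. Since $g\circ f$ is a surjective continuous selfmap of a finite image, it is a bijection, hence an isomorphism by Lemma \ref{bijhomeo}. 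Symmetrically, $f\circ g$ is a bijection of $Y$.

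The second step extracts bijectivity of $f$ and $g$ individually from bijectivity of the composites. Since $g\circ f$ is injective, $f$ is injective; since $f\circ g$ is surjective, $f$ is surjective; so $f$ is a bijection, and likewise $g$. In particular $\#X = \#Y$. Now $f$ is a continuous bijection between two (possibly different) images, so Lemma \ref{bijhomeo} does not directly apply — we need an extra argument to see that $f^{-1}$ is continuous, i.e.\ that $f$ is an isomorphism of graphs.

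The third and key step is to show $f^{-1}$ is continuous. Here I would use the composite isomorphism. Since $g\circ f$ is an isomorphism of $X$, its inverse $(g\circ f)^{-1} = f^{-1}\circ g^{-1}$ is continuous; composing on the left with the continuous map $g$ would give $g\circ f^{-1}\circ g^{-1}$, which is not quite $f^{-1}$. Instead, note $f^{-1} = (g\circ f)^{-1}\circ g$, which is a composition of continuous maps, hence continuous. Thus $f$ is a continuous bijection with continuous inverse, i.e.\ an isomorphism $X\cong Y$. (Symmetrically $g^{-1} = (f\circ g)^{-1}\circ f$ is continuous, giving the same conclusion.)

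The main obstacle is the third step: one must resist the temptation to invoke Lemma \ref{bijhomeo} for $f$ alone — as the remark after that lemma warns, continuity of the inverse of a bijection between \emph{different} images can fail — and instead route through the known isomorphism $g\circ f$ of a single finite image $X$ to obtain $f^{-1} = (g\circ f)^{-1}\circ g$ as a composite of continuous maps. Everything else is bookkeeping about injectivity/surjectivity of composites and the repeated application of Lemmas \ref{bijhomeo} and \ref{nonsurj}.
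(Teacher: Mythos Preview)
Your proof is correct and follows essentially the same route as the paper: use irreducibility together with Lemma~\ref{nonsurj} to force $g\circ f$ and $f\circ g$ to be surjective, deduce that $f$ and $g$ are bijections, and conclude they are isomorphisms. The paper is terser at the end --- it squeezes $\#X=\#Y$ from the two surjections and then simply cites Lemma~\ref{bijhomeo} --- which, as you rightly point out, is literally stated only for selfmaps; your explicit formula $f^{-1}=(g\circ f)^{-1}\circ g$ (continuous since $g\circ f$ is a continuous bijection of the single image $X$, hence an isomorphism by Lemma~\ref{bijhomeo}) is a clean way to justify that final step.
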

\begin{proof}
It suffices to assume that $X$ and $Y$ are homotopy equivalent, and show that $X$ and $Y$ are homeomorphic. Let $f:X\to Y$ and $g:Y \to X$ give a homotopy equivalence, so $g\circ f \simeq \id_X$ and $f\circ g \simeq \id_Y$. It will suffice to show that $f$ and $g$ are bijections, in which case they are homeomorphisms by Lemma \ref{bijhomeo}.

Since $X$ is irreducible and $g\circ f \simeq \id_X$, the map $g\circ f$ must be surjective by Lemma \ref{nonsurj}. Since $X$ and $Y$ are finite this means that $\#Y \ge \#X$, where $\#$ denotes cardinality. By the same reasons applied to $f\circ g$ we have $\#X \ge \#Y$, and so $\#X = \#Y$ and $f$ and $g$ are bijections.
%
%
\end{proof}

By Lemma \ref{nonsurj}, reducibility of images can be judged by which selfmaps are homotopic to the identity. In the most restrictive case the identity map is not homotopic to any other map.

\begin{definition} 
We say an image $X$ is \emph{rigid} if the only map homotopic to $\id_X$ is $\id_X$. 
\end{definition}

Note that by Lemma \ref{nonsurj}, any finite rigid image is irreducible, though we will see that the converse is not true. 

\subsection{Loops}
Examining examples of small digital images reveals surprisingly few to be irreducible. In fact, most images seem reducible to a point. The simplest demonstrably irreducible images are cycles of points. 


\begin{definition}
An \emph{$m$-gon}, denoted $C_m$, is a set of $m$ distinct points $\{c_0,c_1,\ldots,c_{m-1}\}$ such that $c_i$ is adjacent to $c_{i-1}$ and $c_{i+1}$ and no other points, where subscripts are read modulo $m$. Such a set is called a ``digital simple closed curve'' in the literature.
\end{definition}

When we refer to points in an $m$-gon, unless otherwise noted we will denote them as $c_i \in C_m$, and we will always read the subscripts modulo $m$.

We will see later (Corollary \ref{Cmirr}) that when $m\ge 5$, the $m$-gon $C_m$ is irreducible but not rigid. When $2 \leq m\le 4$, the image $C_m$ is homotopy equivalent to a point, which we will see as a consequence of Theorem \ref{fourpoints}. These results are also obtained by Boxer in \cite{boxer99}.

When an $m$-gon with $m \geq 5$ appears within a larger image, other edges in the image may cause it to lose its irreducibility. We would like to investigate which loops in an image can be reduced or made equal by homotopies. For example in the image below, the ``outer" $6$-gon can be shrunk onto the $5$-gon with a homotopy. 
\begin{center}
\begin{tikzpicture}

\node[vertex](a) at (0,1) {};
	\node[vertex](b) at (150:1) {};
	\node[vertex](c) at (210:1) {};
	\node[vertex](d) at (270:1) {};
	\node[vertex](e) at (330:1) {};
	\node[vertex](f) at (30:1) {};
	\node[vertex](a') at (0,.5) {};
	\node[vertex](b') at (162:.5) {};
	\node[vertex](c') at (234:.5) {};
	\node[vertex](d') at (306:.5) {};
	\node[vertex](e') at (18:.5){};
	
	\draw (a) -- (b) -- (c) -- (d) -- (e) -- (f) -- (a);
	\draw (a') -- (b') -- (c') -- (d') -- (e') -- (a');
	\draw (a) -- (a'); \draw (b) -- (b'); \draw (c) -- (c'); \draw (d) -- (d'); \draw (e) -- (e');\draw (f) -- (e');

\end{tikzpicture}
\end{center}

The following definitions will formalize our efforts to capture this sort of information. We adapt definitions from Boxer in \cite{boxer99}.

\begin{definition}
In an image $X$, an $m$-loop is a map $p: C_m \rightarrow X$. If $p$ is an injection such that \ $p(c_i) \adj p(c_{i\pm 1})$ in $X$ and there are no other adjacencies between points in the image of $C_m$, then $p$ is a \emph{simple} $m$-loop. The \emph{length} of an $m$-loop is $m$. 
\end{definition} 

It will sometimes be convenient to discuss simple paths rather than loops. A \emph{simple path} in a digital image $X$ is a continuous injection $p: [0, k]_\Z \rightarrow X$ such that $p(i) \adj p(i+1)$ and there are no other adjacencies between the $p(i)$ for all $i\in [0, k-1]_\Z$.

Though we define paths and loops as maps into $X$, we will sometimes indicate them by tuples of points of $X$, so a path $p:[0,k]_\Z \to X$ would be written as $(p(0), p(1), \dots, p(k))$. We may also indicate loops similarly.

\section{A loop-counting invariant}\label{Lminvariantsec}
In this section we define an integer loop-counting invariant for images: $L_m(X)$, which counts the number of equivalence classes of $m$-loops. 

Our desired equivalence requires a slightly different notion than ordinary homotopy of maps. As in the example above, we will need to allow two loops $p:C_m\to X$ and $q:C_n\to X$ to be equivalent in some cases even when $m\neq n$, in which case it is impossible for $p\simeq q$ because the maps $p$ and $q$ have different domains. To skirt this technicality we use the concept of trivial extensions, also adapted from Boxer. Informally speaking, $\bar p$ is a trivial extension of $p$ when $\bar p$ is a path along the same points as $p$ but with some pauses for rest inserted.

\begin{definition}
[Adapted from \cite{boxer99}, Definition 4.6]
Given $C_m = \{c_0, \hdots c_{m-1}\}$, $p: C_m \rightarrow X$ a loop, we say $\bar p$ is a \emph{trivial extension} of $p$ if $\bar p: C_n \rightarrow X$, where $C_n = \{d_0, \hdots d_{n-1}\}, n\geq m$, such that there exists a sequence $t_k, k\in \{0, \hdots m-1\}$ such that $t_0 = 0$, $\bar p(d_{t_k}) = p(c_k)$, if $d_{t_k} < d_i < d_{t_{k+1}}$, then $\bar p(d_i) = p(c_{t_k})$, and $\bar p(d_i) = p(c_{m-1})$ for all $d_i > d_{t_{m-1}}$.
\end{definition}

Now we are ready to define our desired homotopy relation for simple loops, perhaps having different lengths. If $p$ and $q$ are loops in $X$, then we say $p \approx q$ if and only if $\bar p \simeq \bar q$ for some trivial extensions $\bar p, \bar q$ of $p$ and $q$ having the same length.

The appeal to trivial extensions is sometimes necessary even when $p$ and $q$ have the same length to begin with. That is, it is possible for $p \approx q$ but $p \not \simeq q$ even when $p$ and $q$ have the same length. Consider the following:
\begin{center}
\begin{tikzpicture}

\node[vertex](a) at (0,1) {};
	\node[vertex](b) at (150:1) {};
	\node[vertex](c) at (210:1) {};
	\node[vertex](d) at (270:1) {};
	\node[vertex](e) at (330:1) {};
	\node[vertex](f) at (30:1) {};
	\node[vertex](a') at (0,.5) {};
	\node[vertex](b') at (162:.5) {};
	\node[vertex](c') at (234:.5) {};
	\node[vertex](d') at (306:.5) {};
	\node[vertex](e') at (18:.5){};
	\node[vertex](a'') at (0,1.5) {};
	\node[vertex](b'') at (162:1.5) {};
	\node[vertex](c'') at (234:1.5) {};
	\node[vertex](d'') at (306:1.5) {};
	\node[vertex](e'') at (18:1.5) {};
	
	\draw (a) -- (b) -- (c) -- (d) -- (e) -- (f) -- (a);
	\draw (a') -- (b') -- (c') -- (d') -- (e') -- (a');
	\draw (a'') -- (b'') -- (c'') -- (d'') -- (e'') -- (a'');
	\draw (a'') -- (a) -- (a'); \draw (b'') -- (b) -- (b'); \draw (c'') -- (c) -- (c'); \draw (d'') -- (d) -- (d'); \draw (e'') -- (e) -- (e');\draw (e'') -- (f) -- (e');

\end{tikzpicture}
\end{center}
Let $p$ be a simple $5$-loop around the ``inner'' pentagon, and let $q$ be a simple $5$-loop around the ``outer'' pentagon (with the same orientation as $p$). Then $p\approx q$, but the equivalence between $p$ and $q$ requires a $6$-loop as an intermediate step, and so $p \not\simeq q$, since a true homotopy of $p$ to $q$ would require all intermediate steps to have length 5. 

Our proof that $\approx$ is an equivalence relation will require some elementary facts about trivial extensions. We omit the details of the proofs.
\begin{lem}\label{equivlemma}.
\begin{itemize}
\item
Let $p$ be a simple loop and let $\bar p, \hat p$ be two different trivial extensions of $p$ having the same length. Then $\bar p \simeq \hat p$.
\item
Let $p$ and $q$ be simple loops and let $\bar p \simeq \bar q$ be homotopic trivial extensions. Then for any other trivial extensions $\hat p, \hat q$ having equal lengths greater than the length of $\bar p$ and $\bar q$, we have $\hat p \simeq \hat q$. 
\end{itemize}
\end{lem}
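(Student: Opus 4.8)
The plan is to recast trivial extensions as precompositions by ``collapsing maps.'' Call a surjection $\sigma\colon C_n\to C_m$ a \emph{collapsing map} if $\sigma(d_0)=c_0$ and the preimages $\sigma^{-1}(c_0),\dots,\sigma^{-1}(c_{m-1})$ are blocks of consecutive indices occurring in this order (such a $\sigma$ is automatically continuous, since consecutive indices map to equal or adjacent vertices of $C_m$). Unwinding the definition of trivial extension, $\bar p$ is a trivial extension of $p\colon C_m\to X$ of length $n$ exactly when $\bar p=p\circ\sigma$ for some collapsing map $\sigma\colon C_n\to C_m$. Two observations then carry the argument. \textbf{(i)} A composite of collapsing maps is a collapsing map; hence a trivial extension of a trivial extension of $p$ is again a trivial extension of $p$. \textbf{(ii)} If $H\colon C_N\times[0,k]_\Z\to X$ is a homotopy of loops and $\sigma\colon C_M\to C_N$ is a collapsing map, then $H\circ(\sigma\times\id)\colon C_M\times[0,k]_\Z\to X$ is again a homotopy (continuity in each variable is inherited from $H$ and from $\sigma$), so $\bar p\simeq\bar q$ implies $\bar p\circ\sigma\simeq\bar q\circ\sigma$.

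For the first bullet, observe that a collapsing map $\sigma\colon C_n\to C_m$ is determined by the tuple $(e_0,\dots,e_{m-1})$, where $e_k\ge 0$ counts the ``pauses'' $\sigma$ spends on $c_k$ and $e_0+\cdots+e_{m-1}=n-m$. I would first show that transferring one pause between cyclically adjacent vertices — say replacing $(\dots,e_{k-1},e_k,\dots)$ by $(\dots,e_{k-1}+1,e_k-1,\dots)$ when $e_k\ge 1$ — changes $\sigma$ at a single index of $C_n$, moving that index's value to an adjacent vertex of $C_m$, and leaves $\sigma$ inside the class of collapsing maps. Composing with $p$, the trivial extensions $p\circ\sigma$ and $p\circ\sigma'$ then agree at every index but one, where their values are adjacent or equal in $X$ (continuity of $p$ turns the edge of $C_m$ into an adjacency‑or‑equality in $X$); hence the two‑slice assignment sending the $0$‑slice to $p\circ\sigma$ and the $1$‑slice to $p\circ\sigma'$ is a one‑step homotopy. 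Since any two tuples of the same total are joined by a finite chain of such transfers (funnel all pauses onto $c_0$ and redistribute), transitivity of $\simeq$ gives $\bar p\simeq\hat p$.

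For the second bullet, choose any collapsing map $\sigma\colon C_M\to C_N$, for instance the one placing all $M-N$ pauses on a single vertex. By \textbf{(i)} and \textbf{(ii)}, $\bar p\circ\sigma$ and $\bar q\circ\sigma$ are trivial extensions of $p$ and of $q$ of the common length $M$, and $\bar p\circ\sigma\simeq\bar q\circ\sigma$. Now $\hat p$ and $\bar p\circ\sigma$ are two trivial extensions of $p$ of length $M$, so the first bullet gives $\hat p\simeq\bar p\circ\sigma$, and likewise $\hat q\simeq\bar q\circ\sigma$. Chaining, $\hat p\simeq\bar p\circ\sigma\simeq\bar q\circ\sigma\simeq\hat q$.

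The only real work is in the first bullet: one must check that each pause‑transfer stays among collapsing maps, that such transfers connect all pause‑tuples of a given total, and — most importantly — that each transfer is genuinely realized by a one‑step homotopy, which is precisely where continuity of $p$ is used. (Here I read the displayed definition of trivial extension with $p(c_k)$ in place of $p(c_{t_k})$ on a pause block, which is the intended meaning and is what the argument above requires.)
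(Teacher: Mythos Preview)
Your proposal is correct. The paper itself omits the proof entirely (``We omit the details of the proofs''), so there is nothing to compare against; you have supplied what the authors left out. The recasting of trivial extensions as precompositions by collapsing maps $\sigma\colon C_n\to C_m$ is clean and makes both observations \textbf{(i)} and \textbf{(ii)} immediate, and the pause-transfer connectivity argument for the first bullet is exactly the right elementary move.

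One minor remark on precision: your phrase ``cyclically adjacent'' suggests you might also allow transferring a pause between $c_0$ and $c_{m-1}$. In that wrap-around case the induced change to $\sigma$ is \emph{not} at a single index --- shrinking block $0$ shifts every later block boundary left by one --- though each changed value still moves only to an adjacent vertex of $C_m$, so the one-step homotopy would survive anyway. In any event your ``funnel all pauses onto $c_0$'' strategy uses only the non-cyclic transfers $c_k\to c_{k-1}$ with $k\ge 1$, where the single-index claim is literally true, so the argument goes through as written. Your parenthetical correction of the typo in the paper's definition of trivial extension (reading $p(c_k)$ for $p(c_{t_k})$) is also right and worth flagging.
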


\begin{thm}\label{equivrel}
The above relation $\approx$ is an equivalence relation.
\end{thm}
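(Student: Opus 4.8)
The plan is to check reflexivity, symmetry, and transitivity in turn, using Lemma~\ref{equivlemma} for the only nontrivial point. For reflexivity, note that a simple loop $p:C_m\to X$ is a trivial extension of itself (take $n=m$ and $t_k=k$ in the definition), so $\bar p\simeq\bar p$ witnesses $p\approx p$. For symmetry, if $p\approx q$ choose trivial extensions $\bar p,\bar q$ of equal length with $\bar p\simeq\bar q$; since homotopy of maps is symmetric we get $\bar q\simeq\bar p$, hence $q\approx p$.

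The substantive part is transitivity. Suppose $p\approx q$ and $q\approx r$. Then there are trivial extensions $\bar p$ of $p$ and $\bar q_1$ of $q$, both of some common length $n_1$, with $\bar p\simeq\bar q_1$; and trivial extensions $\bar q_2$ of $q$ and $\bar r$ of $r$, both of some common length $n_2$, with $\bar q_2\simeq\bar r$. Choose an integer $N>\max(n_1,n_2)$ and fix trivial extensions $\hat p,\hat q,\hat r$ of $p,q,r$ respectively, each of length exactly $N$; such extensions exist for every length $\ge m$ (insert the appropriate number of ``rest'' steps at the end). Applying the second bullet of Lemma~\ref{equivlemma} to the homotopic pair $\bar p\simeq\bar q_1$ yields $\hat p\simeq\hat q$, and applying it to $\bar q_2\simeq\bar r$ yields $\hat q\simeq\hat r$; the key point is that the same $\hat q$ is legitimate in both invocations, since the lemma permits \emph{any} trivial extensions of the relevant common length. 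Since $\simeq$ is transitive (enlarging the homotopy interval as needed), $\hat p\simeq\hat r$, and because $\hat p$ and $\hat r$ are trivial extensions of $p$ and $r$ of equal length, we conclude $p\approx r$.

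The main obstacle is the bookkeeping in the transitivity argument: one must pass to a single length $N$ exceeding both intermediate lengths $n_1$ and $n_2$ so that Lemma~\ref{equivlemma} can be applied on both sides with one common middle extension $\hat q$, and one should record the two small facts used implicitly — that trivial extensions of arbitrarily large length always exist, and that $\simeq$ is itself transitive. Everything else is immediate.
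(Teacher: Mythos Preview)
Your proof is correct and follows essentially the same route as the paper's: reflexivity and symmetry are immediate, and transitivity is obtained by passing to trivial extensions of a common large length via Lemma~\ref{equivlemma}. The one noteworthy difference is a small streamlining on your part: by choosing a \emph{single} extension $\hat q$ of $q$ at length $N$ and invoking the ``for any other trivial extensions'' clause of the second bullet of Lemma~\ref{equivlemma} on both sides, you avoid ever having two distinct length-$N$ extensions of $q$ to reconcile, and hence never need the first bullet of the lemma. The paper instead produces potentially distinct extensions $\bar q'$ and $\hat q'$ of $q$ (one coming from each side) and then appeals to the first bullet to identify them up to homotopy; your version is marginally cleaner but the underlying idea is identical.
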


\begin{proof}
Reflexivity and symmetry are inherited from the homotopy equivalence relation. 

To prove transitivity suppose $p, q, r$ are loops in an image $X$ such that $p \approx q$ and $q \approx r$. Thus we have trivial extensions $\bar p$ and $\bar q$ with $\bar p \simeq \bar q$ and $\hat q$ and $\hat r$ with $\hat q \simeq \hat r$. By the second statement of the lemma above we can produce further extensions (with greater lengths) $\bar p', \bar q', \hat q', \hat r'$ all having the same length and we will have $\bar p' \simeq \bar q'$ and $\hat q' \simeq \hat r'$. By the first statement of the lemma abovee have $\bar q' \simeq \hat q'$ since they are two different trivial extensions of $q$ having the same length. Thus we have $\bar p' \simeq \bar q' \simeq \hat q' \simeq \hat r'$ and so $\bar p' \simeq \hat r'$, and since these are trivial extensions of $p$ and $r$ we have $p \approx r$.
\end{proof}

We say a simple $m$-loop $p$ is \emph{irreducible} if there is no $n$-loop $q \approx p$ with $n < m$. We are now ready to define our invariant:

\begin{definition}
For any positive integer $m$, let $L_m(X)$ be the number of equivalence classes of simple irreducible $m$-loops with respect to $\approx$.
\end{definition}

The number $L_m(X)$ is a homotopy equivalence invariant:

\begin{thm}\label{Lminvariant}
If $X$ and $Y$ are homotopy equivalent, then $L_m(X)=L_m(Y)$ for all $m\in \Z$. 
\end{thm}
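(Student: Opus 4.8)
The plan is to show that a homotopy equivalence $f : X \to Y$, $g : Y \to X$ induces a bijection between the $\approx$-equivalence classes of simple irreducible $m$-loops of $X$ and those of $Y$. The natural candidate is $p \mapsto f \circ p$, but there are two problems to overcome: (1) if $p : C_m \to X$ is a simple loop, $f \circ p : C_m \to X$ need not be a simple loop — it need not even be injective — so ``$f \circ p$'' is not literally an $m$-loop in the required sense; and (2) even if we make sense of it, $f \circ p$ need not be irreducible as a loop in $Y$. So the map on equivalence classes cannot be defined naively, and the bulk of the work is setting it up correctly.

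First I would record the basic compatibility facts. If $p$ and $q$ are loops (not necessarily simple) in $X$ with $p \approx q$, and $f : X \to Y$ is continuous, then $f \circ p \approx f \circ q$ in $Y$: a homotopy $\bar p \simeq \bar q$ between trivial extensions composes with $f$ to give a homotopy $f \circ \bar p \simeq f \circ \bar q$, and $f \circ \bar p$ is a trivial extension of $f \circ p$ (trivial extensions are preserved by postcomposition, since the ``rest'' structure is intrinsic to the domain reparametrization). Likewise $f \simeq f'$ implies $f \circ p \simeq f' \circ p$. The key structural input is then: every loop $p : C_m \to X$ is $\approx$-equivalent to a simple $n$-loop for a unique $n \le m$, \emph{or} is $\approx$-equivalent to a constant loop (a $1$-loop); this needs to be extracted, essentially by collapsing repeated or ``backtracking'' points via homotopies — the kind of reduction implicit in the paper's earlier discussion that short cycles collapse to a point. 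Granting this, define for a simple irreducible $m$-loop $p$ in $X$ the class $\Phi([p])$ to be the $\approx$-class of the simple irreducible loop obtained by reducing $f \circ p$; this may have length $n \le m$, so $\Phi$ a priori lands in $\bigsqcup_{n \le m} L_n(Y)$. Do the same with $g$ to get $\Psi$ going the other way.

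Next I would show $\Psi \circ \Phi = \mathrm{id}$ on $L_m(X)$, which forces $\Phi$ to be injective and, symmetrically, bijective onto a subset, and then pin down the length. Since $g \circ f \simeq \id_X$, we get $g \circ (f \circ p) \simeq p$, hence $(g \circ f) \circ p \approx p$; because $p$ is already simple and \emph{irreducible} of length $m$, the reduction of $(g\circ f)\circ p$ is again a simple irreducible $m$-loop $\approx p$, so $\Psi(\Phi([p])) = [p]$ and in particular no length was lost: the reduction of $f \circ p$ must itself have length exactly $m$ (if it had length $n < m$, then $p$ would be $\approx$-equivalent to something of length $n$, contradicting irreducibility — here is where irreducibility is essential and where one must be slightly careful to argue the length cannot drop and then come back up). Thus $\Phi$ restricts to a map $L_m(X) \to L_m(Y)$, $\Psi$ to $L_m(Y) \to L_m(X)$, and they are mutually inverse, giving $L_m(X) = L_m(Y)$.

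The main obstacle is the structural lemma that every loop reduces canonically (up to $\approx$) to a simple irreducible loop or a point, together with the claim that this reduction commutes appropriately with postcomposition and respects homotopy — this is exactly the place where the informal ``shrink the loop'' pictures must be made into a clean statement, and it is where an honest proof would spend most of its effort. A secondary subtlety is the bookkeeping that postcomposition sends trivial extensions to trivial extensions and homotopies to homotopies; this is routine but must be stated, since $\approx$ is defined through trivial extensions rather than directly. Once those are in hand, the bijection argument via $\Psi \circ \Phi = \id$ and $\Phi \circ \Psi = \id$ is formal.
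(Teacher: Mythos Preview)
Your core idea---that postcomposition with $f$ preserves $\approx$, and that $g\circ f \simeq \id_X$ lets one recover $p$ from $g\circ(f\circ p)$---is exactly what the paper uses. But the paper packages it far more economically. Rather than building an explicit bijection $\Phi$ and inverse $\Psi$, the paper first proves directly (by contrapositive) that if $p$ is irreducible then $f\circ p$ is irreducible: if $f\circ p \approx q$ with $q$ of length $<m$, then $p \approx g\circ f\circ p \approx g\circ q$, and $g\circ q$ has length $<m$, contradicting irreducibility of $p$. This is precisely your ``length cannot drop'' step, isolated as a lemma. Then the paper assumes $L_m(X)>L_m(Y)$ for contradiction and applies pigeonhole to $[p]\mapsto[f\circ p]$: two inequivalent $p,q$ must satisfy $f\circ p \approx f\circ q$, whence $p \approx g\circ f\circ p \approx g\circ f\circ q \approx q$, a contradiction. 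No structural ``canonical reduction'' lemma is invoked at all.

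You are right to flag that $f\circ p$ need not be \emph{simple}; the paper does not address this. For its pigeonhole step to go through, one needs that the $\approx$-class of an irreducible $m$-loop contains a simple irreducible $m$-loop (so that it is one of the $L_m(Y)$ classes). That is a much narrower claim than your ``every loop reduces canonically to a unique simple irreducible loop'' lemma. So your plan is sound but overbuilt: the paper's irreducibility-preservation-plus-pigeonhole route reaches the conclusion with less machinery, and the only genuinely delicate point---which you identified and the paper elided---is the simplicity of a representative, not the full canonical-reduction statement you set as the main obstacle.
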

\begin{proof}
Let $X$ and $Y$ be homotopy equivalent images, and let $f:X\rightarrow Y$ and $g:Y\rightarrow X$ be the homotopy equivalence. 

First we show that if $p:C_m\to X$ is irreducible then $f\circ p$ is also irreducible. We prove the contrapositive of this statement: assume that $f\circ p$ is reducible in $Y$. Then $f\circ p\approx q$ where $q$ has less than $m$ points. Then $g\circ q$ must have fewer than $m$ points, but $g\circ q\approx g\circ f\circ p\simeq p$ since $g\circ f \simeq \id_X$, and thus $p$ is reducible. 

Now to prove the theorem, by way of contradiction assume without loss of generality that for some $m$ we have $L_m(X)>L_m(Y)$. Then by the pigeonhole principle and the above paragraph, there exists two irreducible $m$-loops, $p$ and $q$ in $X$, such that $p\not\approx q$ but $f\circ p\approx f\circ q$. Because $g$ is continuous, $g\circ f\circ p\approx g\circ f\circ q$. Since $g\circ f\simeq \id_X$, this implies that $p\approx q$, a contradiction. Thus $L_m(X)=L_m(Y)$.
\end{proof}

The following two theorems show that, for  $m\le 4$, the values of $L_m(X)$ are very predictable.

\begin{thm}\label{L1}
For any image, $L_1(X)$ is the number of connected components of $X$.
\end{thm}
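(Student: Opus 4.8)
The plan is to show that a $1$-loop $p:C_1\to X$ is simply a choice of a point $p(c_0)\in X$, so every $1$-loop is automatically simple and irreducible (there is no shorter loop, since $C_1$ is already the shortest possible cycle length in our indexing), and then to prove that two such point-loops are $\approx$-equivalent precisely when their image points lie in the same connected component of $X$. Since the equivalence classes of simple irreducible $1$-loops are then in bijection with the connected components, this gives $L_1(X) = \#\{\text{components of }X\}$.

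First I would unwind the definitions for $m=1$: a $1$-loop is a map $p\colon C_1\to X$, and $C_1$ has a single point $c_0$ with no adjacencies (the condition ``$c_i$ adjacent to $c_{i-1}$ and $c_{i+1}$'' reads modulo $1$, so $c_0$ would be adjacent to itself, which is excluded by antireflexivity; hence $C_1$ is a single isolated point). Continuity is then vacuous, injectivity is automatic, and the ``no other adjacencies'' condition is vacuous, so every $1$-loop is simple; and since no positive integer is less than $1$, every $1$-loop is irreducible. So $L_1(X)$ counts the $\approx$-classes of the maps $p_x\colon C_1\to X$, $c_0\mapsto x$, as $x$ ranges over $X$.

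Next I would characterize $\approx$ on these point-loops. Given $x,y\in X$, suppose $x$ and $y$ lie in the same component, so there is a path $(x=z_0,z_1,\dots,z_k=y)$. I would build a trivial extension $\bar p_x\colon C_{k+1}\to X$ of $p_x$ (just a loop sitting at $x$ for $k+1$ steps — here I should double-check the trivial-extension bookkeeping for a $1$-loop, but morally a trivial extension of a constant loop is a constant loop of any length) and a trivial extension of $p_y$ of the same length, and then exhibit a homotopy $H\colon C_{k+1}\times[0,k]_\Z\to X$ that ``drags'' the constant-at-$x$ loop along the path to the constant-at-$y$ loop; concretely one can take $H(d_i,t)=z_{\min(i,t)}$ or a similar staircase formula, checking spatial and temporal continuity against the path adjacencies and the (nonexistent) adjacencies in $C_{k+1}$ — actually, since $C_{k+1}$ for $k\ge1$ does have adjacencies, I need the images of adjacent points under $H(\cdot,t)$ to be adjacent-or-equal, which the staircase formula arranges since consecutive $z$'s are adjacent. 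Conversely, if $p_x\approx p_y$, then there are trivial extensions $\bar p_x\simeq\bar p_y$ via some homotopy $H$; tracking the image of a single point $d_0$ through the homotopy, $t\mapsto H(d_0,t)$ is continuous on $[0,k]_\Z$, hence a path in $X$ from $x$ to $y$, so $x$ and $y$ are in the same component.

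The main obstacle I anticipate is purely bookkeeping: the definition of trivial extension is written for general $m\ge 1$ with an indexing $t_k$ of where the ``rest stops'' occur, and specializing it cleanly to a constant $1$-loop — and making sure a constant loop of length $1$ admits trivial extensions of every length $n\ge 1$ that are again constant — requires care, as does writing down the homotopy formula so that every pair of $C_n$-adjacent points maps to an adjacent-or-equal pair at every time $t$. None of this is deep, but it is the only place where something could go wrong. Once the two directions of the characterization of $\approx$ are in hand, the count is immediate: $x\mapsto(\text{component of }x)$ descends to a bijection from $\approx$-classes of simple irreducible $1$-loops to components of $X$, giving $L_1(X)=\#\{\text{components}\}$.
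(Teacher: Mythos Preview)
Your approach is essentially the paper's own: both proofs reduce to the observation that for $1$-loops $p_x$ and $p_y$ one has $p_x\approx p_y$ if and only if there is a path in $X$ from $x$ to $y$, whence the $\approx$-classes are exactly the connected components. The paper states this in one sentence; you spell out both directions, which is fine.

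One technical correction: your staircase formula $H(d_i,t)=z_{\min(i,t)}$ does not work as written. In $C_{k+1}$ the points $d_0$ and $d_k$ are adjacent (the wraparound edge), so spatial continuity at time $t$ requires $H(d_0,t)\adjeq H(d_k,t)$, i.e.\ $z_0\adjeq z_t$, which fails for general $t$. Moreover at $t=k$ the formula gives $d_i\mapsto z_i$, not the constant loop at $y$. The fix is much simpler than the staircase: take $H(d_i,t)=z_t$, constant in the spatial variable at each time. Spatial continuity is then trivial, temporal continuity is exactly the path condition $z_t\adjeq z_{t+1}$, and the endpoints are the constant loops $\bar p_x$ and $\bar p_y$ as desired. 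With that adjustment your argument is complete and matches the paper's.
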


\begin{proof}
For 1-loops $p$ and $q$, the relation $p\approx q$ is equivalent to the existence of a path from $p(c_0)$ to $q(c_0)$. Thus the number of equivalence classes equals the number of connected components.
\end{proof}

\begin{thm}\label{L2L3L4}
For any image $X$, we have $L_2(X) = L_3(X) = L_4(X) = 0$.
\end{thm}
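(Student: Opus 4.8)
The plan is to prove the slightly stronger statement that for $m\in\{2,3,4\}$ there are no simple irreducible $m$-loops at all: every loop $p:C_m\to X$ (in particular every simple $m$-loop) is homotopic to the constant loop at $p(c_0)$. Granting this, observe that a constant map $C_n\to X$ is precisely the trivial extension of length $n$ of the $1$-loop sending $c_0\mapsto p(c_0)$; taking $\bar p=p$ (the trivial extension of $p$ to its own length) and $\bar q$ equal to this constant map, we get $\bar p\simeq\bar q$ with $\bar p,\bar q$ of equal length, i.e.\ $p\approx q$ for a $1$-loop $q$. Since $1<m$, this exhibits $p$ as reducible, hence not irreducible, so it contributes nothing to $L_m(X)$; therefore $L_m(X)=0$.

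For $m=2$ and $m=3$ the homotopy to a constant is achieved in a single step. When $m\le 3$ every pair of distinct vertices of $C_m$ is adjacent, so continuity of $p$ forces $p(c_i)\adjeq p(c_0)$ for all $i$. Then $H:C_m\times[0,1]_\Z\to X$ given by $H(c_i,0)=p(c_i)$, $H(c_i,1)=p(c_0)$ is a homotopy: the slice $H(\cdot,0)=p$ is continuous, the slice $H(\cdot,1)$ is constant hence continuous, and each track $H(c_i,\cdot)$ is a path from $p(c_i)$ to the adjacent-or-equal point $p(c_0)$.

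The case $m=4$ is the only real obstacle. Here $c_0$ and $c_2$ are non-adjacent in $C_4$, so $p(c_0)$ and $p(c_2)$ need not be adjacent or equal, and one cannot collapse $p$ to a constant in one step. My fix is to first fold the square onto one of its edges: let $r:C_4\to X$ be the loop recorded by the tuple $(p(c_0),p(c_1),p(c_1),p(c_0))$. Using only the adjacencies $p(c_0)\adjeq p(c_1)$, $p(c_1)\adjeq p(c_2)$, $p(c_3)\adjeq p(c_0)$ that come from continuity of $p$, one checks that $r$ is continuous and that $H(c_i,0)=p(c_i)$, $H(c_i,1)=r(c_i)$ is a one-step homotopy from $p$ to $r$. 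The image of $r$ lies in the single edge $\{p(c_0),p(c_1)\}$, so the argument of the previous paragraph applies to $r$ and collapses it to the constant loop at $p(c_0)$ in one more step; concatenating the two homotopies shows $p$ is homotopic to a constant, as needed.

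I expect the delicate point to be exactly the bookkeeping of the $m=4$ fold — checking continuity of $r$ and of every slice and track of the homotopy — and the accompanying observation that this trick is special to $m\le 4$: in $C_5$ no analogous one-step fold exists, since no two images of non-adjacent vertices are forced to be adjacent, which is consistent with $C_5$ being irreducible (Corollary \ref{Cmirr}).
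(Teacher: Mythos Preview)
Your argument is correct and follows essentially the same approach as the paper: for $m\in\{2,3\}$ collapse to the constant loop at $p(c_0)$ in one step, and for $m=4$ fold the square onto a single edge. The only cosmetic difference is that the paper folds onto the edge $\{p(c_0),p(c_3)\}$ via $q=(p(c_0),p(c_0),p(c_3),p(c_3))$ and stops there, since this $q$ is already a trivial extension of a $2$-loop; you fold onto $\{p(c_0),p(c_1)\}$ and then take one more step to a constant, reaching a trivial extension of a $1$-loop instead---either route establishes reducibility.
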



\begin{proof}
It suffices to show that any $m$-loop is reducible when $m\in \{2,3,4\}$. First we consider $m\in \{2,3\}$. Let $p:C_m \to X$ be a loop with $m\in \{2,3\}$, and let $q:C_m \to X$ be the loop given by $q(c_i) = p(c_0)$ for all $i$. Then $p \simeq q$ in one step because $p(c_i) \adjeq q(c_i) = p(c_0)$ for all $i$, but $q$ is a trivial extension of a 1-loop, so $p$ is reducible.

The case $m=4$ requires a different homotopy: Let $p:C_4 \to X$ be a 4-loop and let $q:C_4 \to X$ be given by $q(c_0) = q(c_1) = p(c_0)$ and $q(c_2) = q(c_3) = p(c_3)$. Then it is routine to check that $q$ is continuous and $p \simeq q$, but $q$ is a trivial extension of a 2-loop, and so $p$ is reducible.
\end{proof}

Throughout the paper we generally restrict our attention to connected images because all of our invariants will satisfy the following additivity on connected components:
\begin{lem}\label{lem:sumofparts}
If $X$ is finite with connected components $X_1,\dots, X_n$, then $L_m(X)=\sum\limits_{i} L_m(X_i)$.
\end{lem}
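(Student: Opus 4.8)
The plan is to show that every simple irreducible $m$-loop $p\colon C_m\to X$ has image entirely contained in a single connected component $X_i$, and that the equivalence relation $\approx$ never relates loops living in different components; the lemma then follows by partitioning the set of equivalence classes of simple irreducible $m$-loops according to which component they lie in.

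First I would observe that the domain $C_m$ is connected (it is a cycle, hence path-connected as a digital image), and that a continuous image of a connected image is connected: if $p\colon C_m\to X$ is continuous then $p(C_m)$ is a connected subset of $X$, since any path in $C_m$ maps to a path in $X$. Therefore $p(C_m)$ is contained in exactly one component $X_i$, and we may regard $p$ as a simple $m$-loop in $X_i$. Conversely any simple $m$-loop in $X_i$ is a simple $m$-loop in $X$, and it is simple in $X$ iff it is simple in $X_i$ since adjacencies among the $p(c_j)$ are the same whether computed in $X$ or in $X_i$ (the components share no edges). So the simple $m$-loops of $X$ are exactly the disjoint union, over $i$, of the simple $m$-loops of $X_i$.

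Next I would check that $\approx$ respects this decomposition, in both directions. If $q$ is an $n$-loop with $q\approx p$, then there are trivial extensions $\bar p,\bar q$ of the same length with $\bar p\simeq\bar q$; a trivial extension has the same image as the original loop (it merely repeats points), so $\bar p$ and $\bar q$ also have images in single components, and the homotopy $H\colon C_N\times[0,k]_\Z\to X$ has connected domain, hence connected image, forcing $q$ to lie in the same component $X_i$ as $p$. This shows an equivalence class of simple irreducible $m$-loops of $X$ is entirely contained in one component. Moreover, for loops $p,q$ in the same component $X_i$, the relation $p\approx q$ computed in $X_i$ is the same as computed in $X$: any homotopy of trivial extensions landing in $X$ actually lands in $X_i$ by the connectedness argument just given, and conversely a homotopy in $X_i$ is a homotopy in $X$. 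The same remark applies to irreducibility: $p$ is irreducible in $X$ iff it is irreducible in $X_i$, since the potential shorter loops $q\approx p$ would also have to live in $X_i$. Putting these together, the set of $\approx$-classes of simple irreducible $m$-loops of $X$ is in bijection with the disjoint union over $i$ of the corresponding sets for each $X_i$, whence $L_m(X)=\sum_i L_m(X_i)$.

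The only mildly delicate point — and the one I would write out most carefully — is the repeated use of ``continuous image of a connected digital image is connected'' applied not just to loops but to the homotopy $H$ and to trivial extensions, together with the observation that edges, adjacencies, homotopies, and trivial extensions are all ``local'' data that cannot be altered by passing between $X_i$ and $X$ because distinct components are adjacency-disjoint. There is no real obstacle here; it is a matter of assembling these elementary facts in the right order. (Finiteness of $X$ is used only to ensure there are finitely many components so the sum is well-defined; the argument is otherwise insensitive to it.)
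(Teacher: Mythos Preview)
Your proof is correct and follows essentially the same approach as the paper: the key step in both is that a homotopy between trivial extensions of loops yields a path (via $t\mapsto H(c_0,t)$) between points of the two loops, so loops in distinct components cannot be $\approx$-equivalent. Your write-up is in fact more thorough than the paper's, which proves only this non-equivalence across components and leaves implicit the points you spell out---that every simple loop lies in a single component, and that simplicity, irreducibility, and $\approx$ agree whether computed in $X_i$ or in $X$.
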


\begin{proof} 
It suffices to show that if $p$ and $q$ are simple irreducible loops whose images lie in different components (without loss of generality, say $X_1$ and $X_2$), then $p\not \approx q$.
Let $p: C_m \rightarrow X_1$ and $q: C_m \rightarrow X_2$ be simple $m$-loops. For a contradiction, suppose that $p \approx q$. Then we have $\bar p \simeq \bar q$ for some trivial extensions $\bar p, \bar q:C_n \to X$ with $n\ge m$. Let $H: C_n \times [0, k]_\Z \rightarrow X$ be the homotopy from $\bar p$ to $\bar q$. Then $H(c_0,\cdot): [0,k]_\Z \to X$ is a continuous path from $\bar p(c_0)$ to $\bar q(c_0)$, which is a contradiction since $\bar p(C_n) = p(C_m) \subseteq X_1$ and $\bar q(C_n) = q(C_m) \subseteq X_2$ and $X_1$ and $X_2$ are different components.
\end{proof}

We now develop some machinery with the ultimate goal of computing the values of $L_i(C_m)$ for all $i$, which are intuitively clear. In the case $m>4$ we have $L_1(C_m) = 1$ and $L_m(C_m) = 2$ (the latter is 2 because there are two loops ``around'' the image having opposite orientations, and these are not homotopic), and all other $L_i(C_m) = 0$. This will be proved in Theorem \ref{LmCm}. The proof is based on strong lemmas concerning certain types of paths which will play a large role for the rest of the paper. 

\begin{definition}
We say that a simple path $p:[0,k]_\Z \to X$ \emph{has no right angles} if for every $i \in \{1,\dots,k-1\}$, the two edges from $p(i-1)$ to $p(i)$ and $p(i)$ to $p(i+1)$ do not form two edges of any 4-loop (simple or otherwise) in $X$. Similarly we say that a simple loop $p:C_m \to X$ has no right angles if no two consecutive (modulo $m$) edges of $p$ form two edges of a 4-loop in $X$.
\end{definition}

\begin{lem}\label{motherlemma}
Let $p:[0,k]_\Z \to X$ be a simple path with no right angles, and let $q$ be another path homotopic to $p$ in one step with $q(1) = p(0)$. Then $q(i) = p(i-1)$ for all $i\in \{1,\dots, k\}$.
\end{lem}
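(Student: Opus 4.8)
The plan is to induct on $i$, proving the statement ``$q(j) = p(j-1)$ for all $j \in \{1,\dots,i\}$'' for $i$ from $1$ up to $k$. The base case $i=1$ is exactly the hypothesis $q(1) = p(0)$. So suppose inductively that $q(j) = p(j-1)$ for all $j \le i$, where $i < k$; I must show $q(i+1) = p(i)$. Let $H : [0,k]_\Z \times [0,1]_\Z \to X$ be the one-step homotopy with $H(\cdot,0) = p$ and $H(\cdot,1) = q$. The two facts I will combine are: (a) time-continuity of $H$ at the point $i+1$ gives $q(i+1) = H(i+1,1) \adjeq H(i+1,0) = p(i+1)$; and (b) space-continuity of $q$ together with the inductive hypothesis gives $q(i+1) \adjeq q(i) = p(i-1)$. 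So $q(i+1)$ is adjacent-or-equal to both $p(i-1)$ and $p(i+1)$, and of course $q(i+1) \adjeq p(i)$ is not yet known but $p(i-1)\adj p(i) \adj p(i+1)$.

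The heart of the argument is to rule out every possibility for $q(i+1)$ except $p(i)$ using the ``no right angles'' hypothesis and simplicity of $p$. Enumerate the cases for the vertex $v := q(i+1)$, knowing $v \adjeq p(i-1)$ and $v \adjeq p(i+1)$:
\begin{itemize}
\item If $v = p(i-1)$: then since also $v \adjeq p(i+1)$, either $p(i-1) = p(i+1)$ (impossible, $p$ simple) or $p(i-1) \adj p(i+1)$; but then $p(i-1), p(i), p(i+1)$ together with the edge $p(i-1)\adj p(i+1)$ would need checking — actually $p(i-1)\adj p(i+1)$ means the edges $p(i-1)p(i)$ and $p(i)p(i+1)$ lie in a $3$-loop, not a $4$-loop, so this needs a slightly different exclusion: here I would instead directly use that $q$ is a simple path, so $q(i+1) = p(i-1) = q(i)$ is impossible since consecutive values of a simple path are distinct.
\item If $v = p(i+1)$: then $q(i+1) = p(i+1)$, and I claim this forces the path $q$ to fail simplicity or forces $p(i-1)\adjeq p(i+1)$ again; more carefully, $q(i) = p(i-1)$ and $q(i+1) = p(i+1)$ are adjacent, so $p(i-1) \adj p(i+1)$, putting edges $p(i-1)p(i)$, $p(i)p(i+1)$ in a $3$-loop. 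I need to also exclude $3$-loops here, so I should double-check whether ``no right angles'' should be read as also excluding this; rereading, the cleanest route is: $v=p(i+1)$ together with $v \adjeq p(i-1)$ and the fact that we also want to apply the argument at the next stage suggests instead ruling this out by noticing $q(i+1)=p(i+1)$ and $q(i) = p(i-1)$ forces a chord, and a chord $p(i-1)\adj p(i+1)$ makes $p(i-1),p(i),p(i+1)$ a triangle; combined with $q(i-1)=p(i-2)$ (if $i \ge 2$) one gets a $4$-loop $p(i-2)p(i-1)p(i+1)p(i)$... this is the delicate bookkeeping.
\item If $v \notin \{p(i-1), p(i), p(i+1)\}$: then $v$ is a fourth vertex adjacent-or-equal to both $p(i-1)$ and $p(i+1)$; since $v \ne p(i-1)$ we have $v \adj p(i-1)$, and similarly $v\adj p(i+1)$. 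Then $p(i-1), p(i), p(i+1), v$ form a $4$-loop (using $p(i-1)\adj p(i), p(i)\adj p(i+1), p(i+1)\adj v, v \adj p(i-1)$), and the two edges $p(i-1)p(i)$ and $p(i)p(i+1)$ of $p$ are consecutive edges of this $4$-loop — contradicting ``no right angles.''
\end{itemize}
So the only surviving case is $v = p(i)$, completing the induction.

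I expect the main obstacle to be the careful case analysis of the ``degenerate'' possibilities $q(i+1) \in \{p(i-1), p(i+1)\}$ and the interaction with the endpoints $i = 1$ and $i+1 = k$ (where there is no edge ``before'' or ``after'' to invoke), as well as pinning down exactly which short cycles the ``no right angles'' hypothesis lets us exclude — the clean exclusions come from genuine $4$-loops, so the triangle/repeated-vertex cases must instead be killed using simplicity of $p$ and of the homotoped path $q$ (which is simple, or at least injective on the relevant indices — I will need to confirm that $q$ inherits enough injectivity, perhaps by arguing that $q$ is itself a simple path, which should follow since $q(i) = p(i-1)$ for the already-established indices and these are distinct). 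If $q$ is not assumed simple, I would instead carry along the inductive conclusion to get distinctness of $q(i) = p(i-1)$ from the candidate values and close the loop that way.
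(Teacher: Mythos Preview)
Your overall architecture matches the paper's proof exactly: induction on $i$, with the two constraints $q(i+1)\adjeq p(i+1)$ (from the one-step homotopy) and $q(i+1)\adjeq q(i)=p(i-1)$ (from continuity of $q$ and the inductive hypothesis), and then the $4$-loop argument to show $p(i)$ is the unique common neighbour. The generic case $v\notin\{p(i-1),p(i),p(i+1)\}$ you handle correctly and identically to the paper.

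The genuine gap is in your two ``degenerate'' cases $v=p(i-1)$ and $v=p(i+1)$. You try to exclude these via simplicity of $q$ (which is \emph{not} assumed --- $q$ is merely a path) or by manufacturing triangles and further $4$-loops, and you correctly sense that this bookkeeping is not closing. The fix is a one-line observation you never invoke: by the paper's definition, a \emph{simple} path has no adjacencies among the $p(j)$ other than the consecutive ones. In particular $p(i-1)\neq p(i+1)$ and $p(i-1)\not\adj p(i+1)$. Hence $v=p(i-1)$ is impossible, since it would force $p(i-1)\adjeq p(i+1)$; symmetrically $v=p(i+1)$ is impossible. This is exactly how the paper dispatches both cases in a single sentence (``Since $p(i)\not\adj p(i-2)$, the point $q(i)$ cannot equal either of $p(i)$ or $p(i-2)$''), after which the $4$-loop/no-right-angles argument finishes the proof. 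Once you use this, all your worries about endpoints, triangles, and injectivity of $q$ evaporate.
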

\begin{proof}
Our proof is by induction on on $i$. The base case $i=1$ is our hypothesis that $q(1) = p(0)$. For the inductive step assume that $q(i-1) = p(i-2)$, and we will show that $q(i) = p(i-1)$.

Since $q$ is continuous we will have $q(i) \adjeq q(i-1) = p(i-2)$, and since $q$ is homotopic to $p$ in one step we have $q(i) \adjeq p(i)$. Since $p(i)\not\adj p(i-2)$, the point $q(i)$ cannot equal either of $p(i)$ or $p(i-2)$. Thus $q(i)$ is adjacent but not equal to both of $p(i)$ and $p(i-2)$. 

We claim in fact that $p(i-1)$ is the only point in $X$ adjacent to both of $p(i)$ and $p(i-2)$. If there were some other such point $x$, then the path $(p(i),p(i-1), p(i-2), x)$ would form a 4-loop, two of whose edges are $p(i)$ to $p(i-1)$ and $p(i-1)$ to $p(i-2)$. This would contradict our hypothesis that $p$ has no right angles. Thus $p(i-1)$ is the only point adjacent to both of $p(i)$ and $p(i-2)$, and so $q(i) = p(i-1)$ as desired.
\end{proof}

Often we will only require the following immediate corollary:
\begin{lem}\label{pathpulling}
Let $p:[0,k]_\Z \to X$ be a path with no right angles and $f:X\to X$ be homotopic to the identity in one step with $f(p(1)) = p(0)$. Then $f(p(k)) = p(k-1)$. 
\end{lem}
\begin{proof}
Let $q:[0,k]_\Z \to X$ be given by $q = f\circ p$. Then $q(1) = f(p(1)) = p(0)$ by hypothesis, and so we may apply Lemma \ref{motherlemma} to obtain $q(i)=p(i-1)$ for all $i$. In particular for $i=k$ we have $q(k) = f(p(k)) = p(k-1)$ as desired.
\end{proof}

We refer to the above lemma and its corrolary informally as a ``path-pulling'' lemma because it states that a right-angle-free path must ``pull'' along its whole length when its second point is moved to its first.

We obtain a result for loops similar to Lemma \ref{motherlemma} if we additionally assume that our loops do not pass through 3-loops.

\begin{thm}\label{looprotation}
Let $p,q:C_m \to X$ be simple $m$-loops in $X$ with no right angles and no edge of $p$ or $q$ part of a 3-loop. Then $p \simeq q$ if and only if there is some $k$ with $p(c_i)=q(c_{i+k})$.
\end{thm}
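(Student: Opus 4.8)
The plan is to prove the ``if'' direction first, which should be the easy half. If $p(c_i) = q(c_{i+k})$ for all $i$, then $q$ is just a ``rotation'' of $p$, and I would construct a direct homotopy from $q$ to $p$ by $|k|$ steps, each step sliding the loop one position around the cycle. For a single such step: consider the map moving $q$ to the loop $q'$ with $q'(c_i) = q(c_{i+1})$ (say). I would verify this is a legitimate one-step homotopy by checking $q(c_i) \adjeq q(c_{i+1})$ (true since $q$ is a simple loop) and that at each fixed time the intermediate map is continuous; here the no-3-loop hypothesis is what guarantees that a partially-rotated loop remains continuous, since adjacencies cannot ``jump'' in an unexpected way. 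Iterating $|k|$ times gives $q \simeq p$.

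For the ``only if'' direction, suppose $p \simeq q$ via a homotopy $H: C_m \times [0,\ell]_\Z \to X$. The key idea is to reduce to the one-step case by induction on $\ell$: it suffices to show that if $p$ is homotopic to $q$ in one step (with both loops simple, right-angle-free, and avoiding 3-loops), then $q$ is a rotation of $p$, and then to check that the rotation-offset is additive along a chain of one-step homotopies (and that each intermediate loop $H(\cdot, t)$ inherits the hypotheses — this needs an argument, since \emph{a priori} $H(\cdot,t)$ need not be simple). This last point is the real subtlety: I would need a lemma that a loop one-step-homotopic to a right-angle-free simple loop avoiding 3-loops is itself a simple loop with the same properties (possibly after discarding the degenerate case where the loop collapses, which the irreducibility of $C_m$, $m \ge 5$, rules out — though for $m \le 4$ the statement is vacuous or trivial since those $C_m$ are reducible to a point; I should perhaps restrict attention to $m \ge 5$, or handle small $m$ separately).

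Granting that reduction, the heart of the matter is the one-step case. Suppose $q$ is homotopic to $p$ in one step, so $q(c_i) \adjeq p(c_i)$ for all $i$. Pick a point, say $c_0$. Since $q(c_0)$ is adjacent-or-equal to $p(c_0)$, and $p$ is a simple loop with no right angles and no edge in a 3-loop, the point $q(c_0)$ must be one of $p(c_{-1}), p(c_0), p(c_1)$ — this is because the only points adjacent-or-equal to $p(c_0)$ that can serve as the image of $c_0$ under a loop one-step-homotopic to $p$ are neighbors of $p(c_0)$ along $p$; anything else would create a short loop or right angle. Once $q(c_0)$ is pinned down to $p(c_k)$ for $k \in \{-1,0,1\}$, I would use a path-pulling argument in the spirit of Lemma~\ref{motherlemma}: the loop $q$, read as a path starting at $c_0$, is homotopic in one step to $p$ read as a path, and $q$'s starting point agrees with $p$'s point at index $k$; applying (a cyclic version of) Lemma~\ref{motherlemma} forces $q(c_i) = p(c_{i+k})$ all the way around. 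The no-right-angles hypothesis on \emph{both} $p$ and $q$, plus no-3-loops, is exactly what makes the rigidity in Lemma~\ref{motherlemma} propagate around the full cycle and close up consistently modulo $m$.

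\textbf{Main obstacle.} I expect the principal difficulty to be the bookkeeping that intermediate loops $H(\cdot,t)$ in a multi-step homotopy remain simple and retain the ``no right angles / no 3-loops'' properties, so that the one-step analysis can be chained; degenerate intermediate loops (images of $C_m$ that are not injective) must be excluded, which should follow from irreducibility considerations for $m \ge 5$ but requires care. The second delicate point is the precise statement and proof of the cyclic analogue of Lemma~\ref{motherlemma} — making sure the inductive pinning-down of $q(c_i)$ is consistent when the index wraps around modulo $m$, i.e. that the forced value $q(c_0) = p(c_{0+k})$ obtained by going all the way around agrees with the value we started with.
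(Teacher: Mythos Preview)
Your approach is the paper's approach, but you are overcomplicating in two places.

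For the ``if'' direction no hypothesis beyond $q$ being a loop is needed. The paper simply takes $H(c_i,t)=q(c_{i+t})$ for $t\in[0,k]_\Z$: each $H(\cdot,t)$ is a cyclic shift of $q$ (hence continuous), and time-continuity is just $q(c_{i+t})\adjeq q(c_{i+t+1})$. There are no ``partially rotated'' intermediates and the no-3-loop hypothesis plays no role here.

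Your ``main obstacle'' is not one. The one-step statement actually proved is: if $p$ is simple with no right angles and no 3-loop edges and $q$ is \emph{any} loop one-step-homotopic to $p$, then $q(c_i)=p(c_{i+\epsilon})$ for a fixed $\epsilon\in\{-1,0,1\}$. The argument (Lemma~\ref{motherlemma} plus the 3-loop contradiction) uses only the hypotheses on $p$. Once this is in hand, every intermediate $H(\cdot,t)$ in a multi-step homotopy is forced to be a rotation of $p$, hence has the \emph{same image} as $p$ and therefore automatically inherits simplicity, absence of right angles, and absence of 3-loop edges. So the chaining is immediate; you do not need a separate lemma guaranteeing niceness of intermediates, nor any appeal to irreducibility of $C_m$.

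For the one-step case the paper does exactly what you sketch. After noting $q(c_1)\in\{p(c_0),p(c_1),p(c_2)\}$ it handles the two ``shifted'' cases by applying Lemma~\ref{motherlemma} to the linearized path $\bar p(i)=p(c_i)$ (respectively $\bar p(i)=p(c_{2-i})$) to get $q(c_i)=p(c_{i\mp 1})$ for $i\in\{1,\dots,m-1\}$; the wrap-around value at $c_0$ is obtained by re-running the same lemma on the shifted interval $[1,m]_\Z$ --- this is exactly your cyclic-closure bookkeeping, and it is straightforward. The remaining case $q(c_1)=p(c_1)$ is where the no-3-loop hypothesis is actually used: letting $k$ be the first index with $q(c_k)\neq p(c_k)$, the triple $(p(c_{k-1}),p(c_k),q(c_k))$ would be a 3-loop containing the edge $p(c_{k-1})p(c_k)$ of $p$, a contradiction.
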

\begin{proof}
If $p(c_i) = q(c_{i+k})$ then $p\simeq q$ by the homotopy $H(x,t) = q(c_{i+t})$ for $t \in \{0, \dots, k\}$, so one implication of the theorem is clear. For the converse, it suffices to show that if $p$ is homotopic to $q$ in one step, then $p(c_i) = q(c_{i+\epsilon})$ for some $\epsilon \in \{-1,0,1\}$ and all $i$. Since $p$ is homotopic to $q$ in one step, we have $q(c_1) \in \{p(c_0), p(c_1), p(c_2)\}$. 

For the case where $q(c_1) = p(c_0)$ we will show that $q(c_i) = p(c_{i-1})$ for all $i$. Consider the paths $\bar p, \bar q: [0,m-1]_\Z \to X$ given by $\bar p(i) = p(c_i)$ and $\bar q(i) = q(c_i)$. These paths have no right angles, are homotopic in 1 step, and $\bar q(1) = \bar p(0)$. Thus by Lemma \ref{motherlemma} we have $\bar q(i) = \bar p(i-1)$ for all $i\in \{1,\dots m-1\}$, and so $q(c_i) = p(c_{i-1})$ for all such $i$. It remains to show that $q(c_0) = p(c_{m-1})$, but this is seen by applying the same argument above to paths with domain on the interval $[1,m]_\Z$. This shows that $q(c_m) = p(c_{m-1})$, which gives the desired equality because $c_m=c_0$.

For the case where $q(c_1) = p(c_2)$, consider the paths $\bar p, \bar q: [0,m-1]_\Z \to X$ given by $\bar p(i) = p(c_{2-i})$ and $\bar q(i) = q(c_{2-i})$. Then $\bar q(1) = q(c_1) = p(c_2) = \bar p(0)$. Lemma \ref{motherlemma} applies to $\bar p$ and $\bar q$, and so by the same argument as the above paragraph we have $q(c_{2-i}) = p(c_{2-i-1})$ for all $i$, which is equivalent to $q(c_i) = p(c_{i+1})$ for all $i$.

Finally we consider the case where $q(c_1) = p(c_1)$. For the sake of a contradiction let $k$ be the first integer with $q(c_k) \neq p(c_k)$. Since $q$ is homotopic to $p$ in one step we have $q(c_k) \adj p(c_k)$ and by continuity of $p$ and $q$ we have $p(c_{k-1})=q(c_{k-1})$ adjacent to both $p(c_k)$ and $q(c_k)$. Thus $p(c_{k-1}), p(c_k), q(c_k)$ form a 3-loop, which contradicts our hypothesis.
\end{proof}

Theorem \ref{looprotation} can be seen as a version of Lemma \ref{motherlemma} for loops, which requires the additional hypothesis that the edges of $p$ and $q$ are not part of any 3-loop. To see that this extra hypothesis is necessary consider the following example:
\begin{center}
\begin{tikzpicture}
	\node[vertex](a) at (0,1) {};
	\node[vertex](b) at (141:1) {};
	\node[vertex](c) at (193:1) {};
	\node[vertex](d1) at (244:1) {};
	\node[vertex](e1) at (296:1) {};
	\node[vertex](d2) at (244:.66) {};
	\node[vertex](e2) at (296:.66) {};
	\node[vertex](f) at (347:1) {};
	\node[vertex](g) at (39:1) {};
	
	\draw (a) -- (b) -- (c) -- (d1) -- (e1) -- (f) -- (g) -- (a);
	\draw (c) -- (d2) -- (e2) -- (f);
	\draw (d1) -- (d2);
	\draw (e1) -- (e2);	
\end{tikzpicture}
\end{center}
The ``inner'' and ``outer'' 7-loops are homotopic and have no right angles, but do not obey the conclusion of Theorem \ref{looprotation}.

From Theorem \ref{looprotation} we obtain immediately:
\begin{cor}\label{Cmirr}
For any $m\ge 5$, the image $C_m$ is irreducible but not rigid. 
\end{cor}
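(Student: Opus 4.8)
The plan is to prove the two assertions separately. Non-rigidity is immediate and requires essentially nothing; the content is in irreducibility, which I would deduce from Lemma~\ref{nonsurj1step} together with the path machinery culminating in Theorem~\ref{looprotation}.

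For non-rigidity, consider the rotation $\rho\colon C_m\to C_m$ given by $\rho(c_i)=c_{i+1}$. It is homotopic to $\id_{C_m}$ in one step via $H(c_i,t)=c_{i+t}$ for $t\in\{0,1\}$: each slice $H(\cdot,t)$ is a rotation of $C_m$, hence continuous, and each track $H(c_i,\cdot)$ traverses the edge from $c_i$ to $c_{i+1}$, hence is a path. Since $m\ge 5$ we have $\rho\neq\id_{C_m}$, so $C_m$ is not rigid. (This half does not actually use Theorem~\ref{looprotation}.)

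For irreducibility, suppose toward a contradiction that $C_m$ is reducible. By Lemma~\ref{nonsurj1step}, $\id_{C_m}$ is homotopic in one step to a nonsurjective map $f\colon C_m\to C_m$; being a selfmap of a finite image, such an $f$ is also non-injective. Because $m\ge 5$, the graph $C_m$ contains no $3$-cycle and no $4$-cycle, so the simple $m$-loop $p=\id_{C_m}$ automatically has no right angles and no edge lying on a $3$-loop, and $q=f$ is a loop homotopic to $p$ in one step. The argument in the proof of Theorem~\ref{looprotation} which shows ``$p\simeq q$ in one step $\Rightarrow$ $p(c_i)=q(c_{i+\epsilon})$ for a single $\epsilon\in\{-1,0,1\}$'' uses the no-right-angle and no-$3$-loop hypotheses only on $p$: the second loop enters only as the not-necessarily-simple path handed to Lemma~\ref{motherlemma}. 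Running that argument with $p=\id_{C_m}$ and $q=f$ gives $f(c_i)=c_{i+\epsilon}$ for all $i$, so $f$ is a rotation of $C_m$ and therefore a bijection, contradicting nonsurjectivity. Hence $C_m$ is irreducible.

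The main obstacle is precisely that Theorem~\ref{looprotation} is stated for two \emph{simple} loops, while the nonsurjective $f$ extracted above cannot be simple; so one must check that the case analysis of that proof indeed only needs the well-behaved hypotheses on one of the two loops. An alternative that sidesteps this is to invoke the path-pulling Lemma~\ref{pathpulling} directly. Since $f\neq\id_{C_m}$, some vertex satisfies $f(c_j)\neq c_j$, forcing $f(c_j)\in\{c_{j-1},c_{j+1}\}$ by one-step homotopy; relabeling the cycle (reversing orientation if necessary) we may assume $f(c_1)=c_0$. Applying Lemma~\ref{motherlemma} to the right-angle-free simple path $(c_0,c_1,\dots,c_{m-2})$ and its image under $f$ yields $f(c_i)=c_{i-1}$ for $1\le i\le m-2$; then, using that $f(c_3)=c_2$ (here $m\ge 5$ is needed so that $3\le m-2$), a second application to the simple path $(c_2,c_3,\dots,c_{m-1},c_0)$ gives $f(c_{m-1})=c_{m-2}$ and $f(c_0)=c_{m-1}$. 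Thus $f(c_i)=c_{i-1}$ for every $i$, so $f$ is a rotation, in particular surjective — the desired contradiction.
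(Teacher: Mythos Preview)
Your proof is correct and follows the same route as the paper: show that any selfmap homotopic to $\id_{C_m}$ must be a rotation (via the path-pulling machinery behind Theorem~\ref{looprotation}), hence surjective, and conclude irreducibility from Lemma~\ref{nonsurj}. You are in fact more careful than the paper on one point: the paper simply invokes Theorem~\ref{looprotation}, glossing over the fact that the candidate nonsurjective $f$ is not a \emph{simple} loop, whereas you explicitly work around this by reducing to a one-step homotopy via Lemma~\ref{nonsurj1step} and then applying Lemma~\ref{motherlemma} directly on two overlapping arcs of $C_m$.
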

\begin{proof}
Selfmaps $f:C_m \to C_m$ can be viewed as $m$-loops in $C_m$. Since $m\ge 5$, simple $m$-loops in $C_m$ will automatically have no right angles and no edges in 3-loops, and thus we may use Theorem \ref{looprotation}. We see that the only maps homotopic to $\id_{C_m}$ have the form $f(c_i) = c_{i+k}$ for various $k$. Since these are all surjective, $C_m$ is irreducible by Lemma \ref{nonsurj}. Since these are not all equal to the identity, $C_m$ is not rigid.
\end{proof}

Experimentation with small images suggests that images of the above type are rare, so it is natural to suspect that $C_m$ for $m\ge 5$ are the only irreducible images which are not rigid. This seems to be false, however: 
\begin{exl}\label{kleingon}
Let $X$ be the following image:
\begin{center}
\begin{tikzpicture}[scale=.75]
	\node[vertex] (a1)   at (0,1) {};
	\node[vertex] (b1)   at (162:1) {}; 
	\node[vertex] (c1) at (234:1) {};
	\node[vertex] (d1) at (306:1)   {};
	\node[vertex] (e1) at (18:1)  {};
	\node[vertex] (a2)   at (0,2) {};
	\node[vertex] (b2)   at (162:2) {}; 
	\node[vertex] (c2) at (234:2) {};
	\node[vertex] (d2) at (306:2)   {};
	\node[vertex] (e2) at (18:2)  {};
		
	\draw (a1) -- (b1) -- (c1) -- (d1) -- (e1) -- (a1);
	\draw (a2) -- (b2) -- (c2) -- (d2) -- (e2) -- (a2);
	\draw (a1) -- (a2);
	\draw (b1) -- (b2);
	\draw (c1) -- (c2);
	\draw (d1) -- (d2);
	\draw (e1) -- (e2);	
	\draw (b1) -- (e2);
	\draw (c1) -- (d2);
	\draw (d1) -- (c2);
	\draw (e1) -- (b2);
\end{tikzpicture}
\end{center}
Let $f:X\to X$ be the map which interchanges the points on the inner pentagon with their corresponding points on the outer pentagon. Then $f\simeq \id_X$ and so $X$ is not rigid. The image $X$ does, however, seem to be irreducible, though we will not prove it.
\end{exl}

Theorem \ref{looprotation} immediately shows that $C_m$ is irreducible. In fact with a bit more work we can show that many more images are irreducible:
\begin{thm}\label{34loopirr}
If $X$ is finite and has no simple 3 or 4-loops and no vertex of degree one then X is irreducible.
\end{thm}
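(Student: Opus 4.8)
The plan is to reduce everything to Lemma~\ref{nonsurj1step}, which says that $X$ is reducible exactly when $\id_X$ is homotopic in one step to a nonsurjective selfmap. So I will take an arbitrary $f\colon X\to X$ with $f\simeq\id_X$ in one step --- equivalently, $f(x)\adjeq x$ for every $x\in X$ --- and prove that $f$ is surjective. As $X$ is finite, it suffices to show that every vertex $v\in X$ lies in the image of $f$.

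Fix $v\in X$. If $f(v)=v$ there is nothing to do, so assume $f(v)=u$ with $u\adj v$ and $u\neq v$. Since $X$ has no vertex of degree one (a degree-zero vertex satisfies $f(v)=v$ automatically, so that case does not occur here), $v$ has a neighbor $w\neq u$. I claim $(u,v,w)$ is a simple path with no right angles. The three points are distinct, and $u\not\adj w$ since otherwise $\{u,v,w\}$ would be a simple $3$-loop; similarly the consecutive edges $uv$ and $vw$ cannot be two edges of a $4$-loop, because a fourth vertex $d\notin\{u,v,w\}$ adjacent to both $u$ and $w$ would make $u,v,w,d$ a simple $4$-loop --- both excluded by hypothesis. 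Then I apply the path-pulling lemma (Lemma~\ref{pathpulling}) to the path $(u,v,w)$, i.e.\ with $k=2$: since $f$ sends the second point of the path to the first ($f(v)=u$), the lemma yields $f(w)=v$. Hence $v\in f(X)$. As $v$ was arbitrary, $f$ is surjective, and therefore $X$ is irreducible. Note this argument uses neither connectedness of $X$ nor anything beyond the three stated hypotheses.

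I expect no serious obstacle: the only real idea is to feed a length-$2$ path into the path-pulling lemma, and the only delicate point is the verification that $(u,v,w)$ is genuinely right-angle-free --- this is precisely where "no simple $3$-loops" and "no simple $4$-loops" are used, while "no vertex of degree one" serves only to produce the extra neighbor $w$ once $f(v)\neq v$. The one convention to be careful about is that the degenerate closed walk $(u,v,w,v)$ is not counted as a right angle, which matches the usage of "no right angles" in the proof of Corollary~\ref{Cmirr}.
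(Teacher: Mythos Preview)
Your argument is correct and substantially simpler than the paper's. The paper proves the theorem via an auxiliary lemma (allowing degree-one vertices provided $f$ fixes them) established by a somewhat intricate induction on $\#X$: one assumes $f(a)=f(b)$, rules out the possibility that $a,b$ lie on a common simple loop via Theorem~\ref{looprotation}, then deletes $b$, passes to the component of $a$, and checks that the induced map still satisfies the inductive hypotheses. Your approach bypasses all of this by applying Lemma~\ref{pathpulling} to a single length-$2$ path at each vertex, which is exactly the right granularity for the stated theorem. Your proof also does not use connectedness, whereas the paper's lemma assumes it; and you correctly isolate where each hypothesis is consumed. What you lose relative to the paper is only the slightly stronger auxiliary lemma covering images that \emph{do} have degree-one vertices.

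One small point to tighten: when you assert that a fourth vertex $d\notin\{u,v,w\}$ adjacent to both $u$ and $w$ makes $(u,v,w,d)$ a \emph{simple} $4$-loop, you implicitly need $d\not\adj v$. If $d\adj v$, the $4$-loop has a chord and is not simple --- but then $\{u,v,d\}$ (or $\{v,w,d\}$) is a simple $3$-loop, which is equally forbidden. So the case split still yields a contradiction; it is just worth saying explicitly. With that adjustment the verification that $(u,v,w)$ has no right angles is complete, and your reading of the ``no right angles'' convention (excluding the degenerate walk $(u,v,w,v)$) is the one the paper uses in practice, as witnessed by the proof of Lemma~\ref{motherlemma}.
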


The theorem above follows immediately from the following more technical lemma:
\begin{lem}
Let $X$ be finite and connected with no simple 3 or 4-loops, and let $f:X\to X$ be a map homotopic to the identity in one step such that $f(x)=x$ whenever $x$ has degree 1. Then $f$ is surjective.
\end{lem}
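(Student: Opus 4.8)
The plan is to prove the contrapositive in spirit: assume $f:X\to X$ is homotopic to the identity in one step, fixes every degree-one vertex, and show it must be surjective. Suppose for contradiction that $f$ is not surjective, so some point $y\in X$ is not in the image of $f$. Since $f\simeq\id_X$ in one step, we have $f(x)\adjeq x$ for every $x$; in particular $f(y)\adj y$ because $f(y)\neq y$. The first move is to set up a ``path-pulling'' argument of the kind isolated in Lemma \ref{pathpulling}: I want to locate a simple path with no right angles starting at $y$, travel along it, and derive a contradiction from how $f$ must act on its far end. The key enabling observation is that in an image with no simple $3$- or $4$-loops, \emph{every} simple path automatically has no right angles (two consecutive edges of a simple path cannot be two edges of a $4$-loop, since that $4$-loop would either be simple or collapse to a shorter cycle forced by an extra adjacency, and a collapsing diagonal would create a simple $3$-loop). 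So the no-right-angles hypothesis of Lemma \ref{motherlemma} is free here.

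Next I would build the path. Starting from $y$, greedily extend a simple path $p = (p(0), p(1), \dots, p(k))$ with $p(0) = y$, choosing $p(1)$ to be any neighbor of $y$, and continuing as long as possible so that the path remains simple (no repeated vertices, no chords). Because $X$ is finite this terminates at some vertex $p(k)$. I claim the terminal vertex $p(k)$ must have degree one. Indeed, if $p(k)$ had a neighbor $w$ other than $p(k-1)$, then since the path is simple and maximal, $w$ must already be one of the earlier $p(j)$; the adjacency of $p(k)$ to $p(j)$ together with the path from $p(j)$ to $p(k)$ would form a simple cycle, which (if short) is a forbidden $3$- or $4$-loop, and (if long) still cannot be extended — so some care is needed here, and this is where I expect the real work to be. The cleanest fix is probably to not merely extend maximally but to choose, among all simple paths starting at $y$ with no chords, one of maximum length, and argue its endpoint has degree one using the absence of $3$- and $4$-loops to rule out the only ways a chord or short cycle could appear. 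Once $p(k)$ has degree one, our hypothesis gives $f(p(k)) = p(k)$.

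Now I apply the path-pulling lemma in the contrapositive direction. We do \emph{not} know $f(p(1)) = p(0)$; rather, we know $f(y)$ is a neighbor of $y$ and $y\notin \operatorname{im} f$. Consider $q = f\circ p$, a path homotopic to $p$ in one step. Since $f(p(0)) = f(y) \neq y = p(0)$ and $f(p(0)) \adj p(0)$, and $p$ has no right angles, I want a version of Lemma \ref{motherlemma} that propagates this ``shift'' forward: if $f(p(0))$ is a neighbor of $p(0)$ not equal to $p(0)$, it should force $f(p(0)) = p(1)$ (the unique neighbor of $p(0)$ along the path — using that $p(0) = y$, if $y$ had another neighbor $p'$ adjacent to $p(1)$ we'd get a $3$-loop, and if $f(p(0))$ were some neighbor of $y$ not adjacent to $p(1)$... this needs the degree-one analysis at $y$, so really I should choose the starting direction so that $y$'s relevant structure is controlled). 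Iterating, $f$ shifts the whole path one step toward $p(0)$, so $f(p(k)) = p(k-1) \neq p(k)$, contradicting $f(p(k)) = p(k)$. The contradiction shows no such $y$ exists, so $f$ is surjective, and Theorem \ref{34loopirr} follows by Lemma \ref{nonsurj} once one notes that a map $f$ witnessing reducibility can be taken homotopic to $\id_X$ in one step (Lemma \ref{nonsurj1step}) and that the degree-one vertices can be assumed fixed by first collapsing the path structure appropriately.

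\textbf{Main obstacle.} The delicate point is the endpoint argument: showing that a suitably maximal chord-free simple path starting at $y$ must terminate at a degree-one vertex, and simultaneously that near $y$ the map $f$ is forced to push into the path rather than off to the side. Both hinge on carefully exploiting the absence of simple $3$- and $4$-loops to eliminate stray adjacencies, and getting the quantifiers right (``choose a longest such path'' versus ``extend greedily'') is where the proof could go wrong if done carelessly.
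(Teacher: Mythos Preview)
Your approach has a genuine gap, and it is exactly the one you flag as the ``main obstacle'': the claim that a maximal simple chord-free path starting at $y$ must terminate at a degree-one vertex is simply false in the presence of long cycles. Take $X = C_5$: every maximal simple path visits all five vertices, and its endpoint has degree $2$, being adjacent to the start of the path. More to the point, the principal application of this lemma is Theorem~\ref{34loopirr}, which concerns images with \emph{no} degree-one vertices at all --- the hypothesis ``$f(x)=x$ whenever $x$ has degree $1$'' is vacuous there, and your strategy of reaching a fixed degree-one endpoint cannot get started. Choosing a longest path rather than a greedily maximal one does not help; in $C_m$ the longest simple path still closes up into the $m$-cycle.

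There is a second, independent problem with the path-pulling step. Lemma~\ref{motherlemma} says that if $f(p(1)) = p(0)$ then $f(p(i)) = p(i-1)$ for all $i$ --- it propagates a \emph{backward} shift. You want the opposite: from $f(p(0)) = p(1)$ (a forward shift at the start) deduce $f(p(k)) = p(k+1)$, or in your phrasing $f(p(k)) = p(k-1)$, which is already inconsistent with $f(p(0)) = p(1)$. The forward version does not follow from the lemma; from $f(p(i-1)) = p(i)$ you get only $f(p(i)) \in N(p(i))$, and there is no mechanism forcing $f(p(i))$ onto the path rather than off to some other neighbor of $p(i)$.

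The paper's proof is structurally quite different: it proceeds by induction on $|X|$. Assuming $f$ is not surjective, one finds $a \neq b$ with $f(a) = f(b)$; the absence of $3$-loops forces $f(a) = f(b) \in \{a,b\}$, say $=b$. Theorem~\ref{looprotation} (loop rotation) shows $a$ and $b$ cannot lie on a common simple loop, so deleting $b$ leaves $a$ in a component $X'$ in which only $a$ was adjacent to $b$. One then modifies $f$ on $X'$ by setting $f'(a)=a$, checks $f'$ is a valid self-map of $X'$ satisfying the hypotheses, and applies the inductive hypothesis to get $f'$ bijective --- which contradicts the fact (via Lemma~\ref{pathpulling} on the path $(b,a,x)$) that every neighbor $x$ of $a$ in $X'$ has $f'(x)=a$. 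The loop-rotation lemma, not a path-to-a-leaf argument, is what handles the cycle case that breaks your approach.
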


\begin{proof}
We prove the lemma by induction on the number of points in $X$. If $X$ is a single point then $f$ is automatically surjective.

For the inductive case assume for the sake of a contradiction that $f$ is not surjective, and so there are points $a,b\in X$ with $f(a)=f(b)$. Since $f$ is homotopic to the identity we have $f(x)\adjeq x$ for all $x\in X$. Thus $f(a) = f(b)$ must be adjacent or equal to both $a$ and $b$. Since our image has no 3-loops, the only points adjacent to both $a$ and $b$ are $a$ and $b$ themselves. Thus $f(a)=f(b) \in \{a,b\}$, and without loss of generality we will assume that $f(a)=f(b)=b$. 

First we show that $a$ and $b$ cannot be part of some simple $m$-loop $p$ with $m\ge 5$. Assuming such a loop $p$ did exist, since $X$ has no 3- or 4-loops we may apply Lemma \ref{looprotation} and conclude that $p$ is is homotopic only to its ``rotations'' $r_k(i) = p(i+k)$. Since $f\simeq \id_X$ we will have $f\circ p \simeq p$, and so $f\circ p$ is a rotation of $p$. But this is impossible since $f(a)=f(b)$ and $a$ and $b$ are points of $p$. Thus, below we may assume that $a$ and $b$ are not part of the same simple $m$-loop for any $m\ge 3$. (By hypothesis, our image has no simple 3- or 4-loops at all.)

Let $X'$ be the component of $a$ in $X - \{b\}$. The above paragraph means that no point of $X'$ is adjacent (in $X$) to $b$ other than $a$. If there were some other such point $c$, we would have a path from $a$ to $c$ in $X'$ (therefore disjoint from $b$), which together with the adjacencies $a\adj b \adj c$ would make a loop including $a$ and $b$, which is not possible. This in particular means that the degree of every point $x \neq a$ is the same whether we consider the degree in $X$ or the degree in $X'$. 

The previous paragraph also means that $f(x)=b$ is impossible for any $x\in X'$ with $x\neq a$, since $x\adj f(x) = b$ and $a$ is the only point of $X'$ adjacent to $b$.

Let $f':X'\to X$ be given by $f'(a)=a$ and $f'(x)=f(x)$ for all $x \neq a$. We will argue that $f'$ is in fact continuous and that $f'(X') \subseteq X'$. First we show the inclusion. Take some $x\in X'$, and let us assume for the sake of a contradiction that $f'(x)\not\in X'$. We may assume that $x\neq a$ since $f'(a)\in X'$, and so $f'(x)=f(x) \adjeq x$. 
Thus $x$ is in $X'$, while the adjacent point $f'(x)\not \in X'$. Since $X'$ is a component in $X-\{b\}$, this is possible only when $f'(x)=b$, which we have already said is impossible for any $x\neq a$. 

Now we show that $f'$ is continuous: Take two points $x,y\in X'$ with $x\adj y$, and we will show that $f'(x)\adjeq f'(y)$. If neither $x$ nor $y$ equals $a$ this is clear since $f'(x)=f(x)$ and $f'(y)=f(y)$ and $f$ is continuous. The desired adjacency is also clear when $x=y=a$. It remains to consider when one of the points is $a$. In this case we must show that if $x\adj a$, then $f'(x) \adjeq f'(a)$, which is to say $f(x) \adjeq a$. In fact it must be that $f(x)=a$, by Lemma \ref{pathpulling} applied to the path $(b,a,x)$. 

We have shown that $f':X'\to X'$ is continuous. Observe that $X'$ has no 3- or 4-loops and $f'$ fixes any points of degree 1 in $X'$, since $f'(a)=a$ and if $x\neq a$ has degree 1 in $X'$ then $x$ has degree 1 in $X$ and thus $f'(x)=f(x)=x$. Since $X'$ has fewer points than $X$ we apply induction and conclude that $f'$ is surjective. Since $X'$ is finite $f'$ is also injective.

But we have already seen that $f'(x)=a$ for any point $x \in X'$ with $x \adj a$. Thus $f$ cannot be injective, which is a contradiction, except in the special case when there are no points $x\in X'$ adjacent to $a$. Since $X'$ is the component of $X-\{b\}$ containing $a$ this special case only occurs when $a$ has degree 1 in $X$. But $f(a)=b$ and this would contradict the assumption that $f$ fixes all degree 1 points.
\end{proof}

Now we are ready to give our computation of the $L$ invariant for $m$-gons.

\begin{thm}\label{LmCm}
For $m \ge 5$ we have $L_1(C_m) = 1$ and $L_m(C_m) = 2$, and $L_i(C_m) = 0$ for all other $i$. 
\end{thm}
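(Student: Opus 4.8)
The plan is to settle every $i\neq m$ by a short direct argument and then build a ``winding number'' invariant for loops in $C_m$ to handle the case $i=m$. First the easy cases: by Theorem~\ref{L1}, $L_1(C_m)$ equals the number of components of $C_m$, namely $1$, and by Theorem~\ref{L2L3L4} we have $L_2(C_m)=L_3(C_m)=L_4(C_m)=0$. For $i\geq 5$ with $i\neq m$ I claim there are no simple $i$-loops in $C_m$ at all: if $i>m$ there is no injection $C_i\to C_m$, and if $5\leq i<m$ the image of a simple $i$-loop would be a proper connected subset of $C_m$ carrying an $i$-cycle, which is impossible since removing any single vertex of $C_m$ leaves a path, so every proper connected subset of $C_m$ is a subpath and contains no cycle. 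Hence $L_i(C_m)=0$ for all $i\notin\{1,m\}$.

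For $L_m(C_m)$, observe that a simple $m$-loop $p:C_m\to C_m$ is an injection of a finite set into itself, hence a bijection, hence a graph automorphism; so the simple $m$-loops are exactly the $2m$ elements of the dihedral group, the ``rotations'' $c_i\mapsto c_{i+k}$ and the ``reflections'' $c_i\mapsto c_{k-i}$. Consecutive rotations, and consecutive reflections, are homotopic in one step by the obvious rotating homotopy, so all rotations are mutually $\approx$-equivalent and all reflections are mutually $\approx$-equivalent; thus there are at most two $\approx$-classes of simple $m$-loops, and it remains only to show (a) that rotations are not $\approx$-equivalent to reflections and (b) that every simple $m$-loop is irreducible.

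Both (a) and (b) I would extract from a winding number $w(\ell)\in\Z$ assigned to an arbitrary loop $\ell:C_n\to C_m$: since $m\geq 3$, each step $\ell(c_j)\to\ell(c_{j+1})$ differs by a \emph{unique} element of $\{-1,0,1\}$ modulo $m$; the sum of these integer lifts around the loop is a multiple of $m$ (as $\ell$ returns to its start), and $w(\ell)$ is that quotient. Then I would check: (i) $w$ is unaffected by trivial extensions, since an inserted pause is a step of $0$; and (ii) $w$ is unaffected by a one-step homotopy $H:C_n\times[0,1]_\Z\to C_m$, by lifting $H$ itself to a map into $\Z$ --- the only obstruction is the holonomy around the unit cells of the grid $C_n\times[0,1]_\Z$, but each such cell maps to a loop of length at most $4$ in $C_m$, which for $m\geq 5$ has image in a proper arc and hence winding $0$, so there is no holonomy and $w$ is constant along the homotopy. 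Hence $w$ is constant on $\approx$-classes. Finally, any loop whose image lies in a proper arc of $C_m$ has $w=0$ (the inclusion of a proper arc lifts to $\Z$), so in particular every $n$-loop with $n<m$ has $w=0$; since a rotation has $w=+1$ and a reflection has $w=-1$, this yields (b) (no simple $m$-loop is $\approx$-equivalent to any shorter loop) and (a) ($+1\neq-1$), whence $L_m(C_m)=2$.

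The step I expect to be the main obstacle is (ii): building the lift of a homotopy carefully enough for the holonomy argument to be rigorous --- one must verify that the lift, assembled cell by cell over the grid, is globally well defined. This is exactly where the hypothesis $m\geq 5$ is needed, since in $C_3$ or $C_4$ a loop of length at most $4$ can wind all the way around, the homotopy lift may fail to exist, and the conclusion genuinely breaks down ($C_2,C_3,C_4$ each being reducible to a point). The remaining points --- that rotations of a fixed loop are mutually homotopic, and that proper connected subsets of $C_m$ are paths --- are routine.
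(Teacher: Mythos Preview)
Your argument is correct and takes a genuinely different route from the paper's. You both dispatch $i\neq m$ the same way; for $L_m(C_m)=2$ the paper shows every simple $m$-loop is $\simeq$ to one of $q_1(c_i)=c_i$ or $q_2(c_i)=c_{m-i}$ and then separates these via Theorem~\ref{looprotation} (which in turn rests on the path-pulling Lemma~\ref{motherlemma}). Your winding number is more self-contained and, being an $\approx$-invariant on the nose, handles both irreducibility and the separation of the two classes in one stroke --- whereas the paper's appeal to Theorem~\ref{looprotation} literally yields only $q_1\not\simeq q_2$ and leaves irreducibility and $q_1\not\approx q_2$ implicit. On the other hand, Lemmas~\ref{motherlemma} and~\ref{pathpulling} and Theorem~\ref{looprotation} are reusable structural tools that drive much of the rest of the paper, so that route amortizes. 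As for your ``main obstacle'': you do not need a global lift of $H$ at all (which would indeed be obstructed by the big cycle $C_n\times\{0\}$). Writing $\delta_t(j)\in\{-1,0,1\}$ for the step $H(c_j,t)\to H(c_{j+1},t)$ and $\epsilon(j)\in\{-1,0,1\}$ for the step $H(c_j,0)\to H(c_j,1)$, the cell identity $\delta_0(j)+\epsilon(j+1)-\delta_1(j)-\epsilon(j)\equiv 0\pmod m$ forces equality since the left side lies in $[-4,4]$ and $m\geq 5$; summing over $j$ the $\epsilon$'s telescope and give $\sum_j\delta_0(j)=\sum_j\delta_1(j)$, so $w$ is preserved by any one-step homotopy with no lifting issue to worry about.
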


\begin{proof}
The fact that $L_1(C_m) = 1$ is by Theorem \ref{L1}, and $L_2(C_m) = 0$ by Theorem \ref{L2L3L4}. Also $L_i(C_m) = 0$ for all $i \not\in\{1,2,m\}$ because $C_m$ has no simple $i$-loops for such $i$. It then remains to show $L_m(C_m) = 2$.

Let $q_1,q_2: C_m \to C_m$ be simple $m$-loops given by $q_1(c_i)=c_i$ for all $i$, and $q_2(c_i) = c_{m-i}$. We will show that all simple $m$-loops in $C_m$ are homotopic to either $q_1$ or $q_2$.

Let $p:C_m \to C_m$ be any simple $m$-loop. By rotating the loop if necessary, our loop $p$ is homotopic to a loop $p'$ with $p'(c_0) = c_0$. By continuity we have $p'(c_1) \adjeq c_0$, but since $p'$ is a bijection (because $p$ is simple) and $p'(c_0) = c_0$ we have $p'(c_1) \neq c_0$ and so $p'(c_1) \in \{c_{-1}, c_1\}$. We will see below that both of these values for $p'(c_1)$ are permissible, and they determine completely the other values of $p'$.

In the first case where $p'(c_1) = c_1$, by continuity we have $p'(c_2) \adjeq c_1$ but $p'$ is a bijection and $p'(c_0)=c_0$ and $p'(c_1)=c_1$, so $p'(c_2)$ cannot equal $c_0$ or $c_1$, so $p'(c_2) = c_2$. By the same arguments it is easy to see that $p'(c_i)=c_i$ for all $i$, and thus $p' = q_1$.

In the second case where $p'(c_1)=c_{-1}$, it follows from the same reasons as above that $p'(c_i) = c_{-i}$ for all $i$. In other words, $p'(c_i) = c_{m-i}$ for all $i$, and thus $p' = q_2$.

In either case, $p$ is homotopic to one of $q_1$ or $q_2$. Thus there are at most two homotopy classes for simple $m$-loops in $C_m$, and so $L_m(C_m) \le 2$. The loops $q_1$ and $q_2$ are in fact not homotopic by Lemma \ref{looprotation} because $q_2(c_i)=c_{m-i}$ cannot be written as $q_1(c_{i+k})= c_{i+k}$ for any $k$. Thus $L_m(C_m) = 2$. 
\end{proof}

We conclude our discussion of $L_m$ with two examples. First we compute all $L_m$ numbers for a specific image that is not an $m$-gon. 
\begin{exl}\label{screwexl}
Consider the following image:
\begin{center}
\begin{tikzpicture}
	\node[vertex](a) at (0,1) {};
	\node[vertex](b) at (150:1) {};
	\node[vertex](c) at (210:1) {};
	\node[vertex](d) at (270:1) {};
	\node[vertex](e) at (330:1) {};
	\node[vertex](f) at (30:1) {};
	\node[vertex](g) at (0,0) {};
	\draw (a) -- (b) -- (c) -- (d) -- (e) -- (f) -- (a);
	\draw (a) -- (g) -- (d);
\end{tikzpicture}
\end{center}
This image of 7 points appears in our catalog of irreducible images, where it has been labeled $7_1$. By Theorem \ref{L1} we have $L_1(7_1)=1$, and $L_2(7_1) = 0$ by Theorem \ref{L2L3L4}. By Theorem \ref{looprotation} we have $L_5(7_1)=4$, since there are four simple irreducible 5-loops, and none of these are rotations of one another. Finally $L_6(7_1)=2$ because the path traversed by the outer six points is irreducible and not homotopic to its reverse. All other numbers $L_m(7_1)$ are zero since there are no simple $m$-loops for $m\not\in \{1,2,5,6\}$.
\end{exl}

Given the example above and the proof of Theorem \ref{looprotation} it would seem that the numbers $L_m(X)$ for $m>4$ should always be even, since every irreducible simple $m$-loop $p(c_i) = c_{j_i}$ will be distinct from its ``reverse'' loop $p'(c_i) = c_{m-j_i}$. The image from Example \ref{kleingon} shows that an irreducible simple $m$-loop can in fact be homotopic to its own reverse. In this sense the image is a digital analogue of the Klein bottle.

\begin{exl}
Let $X$ be the image in Example \ref{kleingon}. 
Let $p$ be the simple $5$-loop which traverses the inner pentagon. This $p$ can be pushed radially ``outwards'', and is homotopic to the simple $5$-loop which traverses the outer pentagon. Now we can push $p$ back inwards along the longer edges in the picture, and we obtain the reverse of our original $p$. Thus $p$ is homotopic to its reverse. It seems (though we will not prove it) that $L_5(X) = 1$.
\end{exl}

We conclude the section by noting that there seems to be considerable opportunity for improvement in defining loop-counting invariants for digital images. In particular the fact that $L_3(X)=L_4(X)=0$ for all $X$ means that our invariant cannot detect 3- or 4-loops, even in cases where those loops persist under homotopy equivalence. Consider for example the following space:

\begin{center}
\begin{tikzpicture}
	\node[vertex] (a)   at (.5,0) {};
	\node[vertex] (b)   at (72:.5) {}; 
	\node[vertex] (c) at (144:.5) {};
	\node[vertex] (d) at (216:.5)   {};
	\node[vertex] (e) at (288:.5)  {};
		
	\begin{scope}[shift={(1,0)}] 
		\node[vertex] (t) at (0,.5) {};
		\node[vertex] (s) at (0,-.5) {};
	\end{scope}
	
	\begin{scope}[shift={(2,0)}] 
		\node[vertex] (a1)   at (-.5,0) {};
		\node[vertex] (b1)   at (252:.5) {};
		\node[vertex] (c1) at (322:.5) {};
		\node[vertex] (d1) at (36:.5)   {};
		\node[vertex] (e1) at (108:.5)  {};
	\end{scope}

	\draw (a) -- (b) -- (c) -- (d) -- (e) -- (a);
	\draw (a1) -- (b1) -- (c1) -- (d1) -- (e1) -- (a1);
	\draw (t) -- (a) -- (s) -- (a1) -- (t);
\end{tikzpicture} 
\end{center}

It seems, though we will not prove it, that all images homotopy equivalent to the above will have a 4-loop. The 4-loop is itself reducible (because all 4-loops are reducible), but it seems that it cannot be reduced by a map of the whole image. It would be desirable to develop an ``ambient'' loop-counting invariant which would be nonzero for the above image.

\section{A catalog of small digital images up to homotopy equivalence}\label{catalog}
\begin{figure}
\renewcommand{\arraystretch}{7}
\begin{center}
\begin{tabular}{rccc}
$n=1$: &
\begin{minipage}{.60\textwidth}
\begin{center}
\begin{tikzpicture}
	\node[vertex] ()   at (0,0) {};
\end{tikzpicture}
\end{center}
\end{minipage}%
\\ 
$n=5$: &
\begin{minipage}{.60\textwidth}
\begin{center}
\begin{tikzpicture}
	\node[vertex] (a)   at (0,1) {};
	\node[vertex] (b)   at (162:1) {}; 
	\node[vertex] (c) at (234:1) {};
	\node[vertex] (d) at (306:1)   {};
	\node[vertex] (e) at (18:1)  {};
		
	\draw (a) -- (b) -- (c) -- (d) -- (e) -- (a);
\end{tikzpicture}
\end{center}
\end{minipage}%
\\ 
$n=6$: &
\begin{minipage}{.60\textwidth}
\begin{center}
\begin{tikzpicture}
	\node[vertex](a) at (0,1) {};
	\node[vertex](b) at (150:1) {};
	\node[vertex](c) at (210:1) {};
	\node[vertex](d) at (270:1) {};
	\node[vertex](e) at (330:1) {};
	\node[vertex](f) at (30:1) {};
	\draw (a) -- (b) -- (c) -- (d) -- (e) -- (f) -- (a);
\end{tikzpicture}
\end{center}
\end{minipage}
\\ 
$n=7$: &
\begin{minipage}{.20\textwidth}
\begin{tikzpicture}
	\node[vertex](a) at (0,1) {};
	\node[vertex](b) at (141:1) {};
	\node[vertex](c) at (193:1) {};
	\node[vertex](d) at (244:1) {};
	\node[vertex](e) at (296:1) {};
	\node[vertex](f) at (347:1) {};
	\node[vertex](g) at (39:1) {};
	\draw (a) -- (b) -- (c) -- (d) -- (e) -- (f) -- (g) -- (a);
\end{tikzpicture}
\end{minipage}%
\begin{minipage}{.20\textwidth}
\begin{tikzpicture}
	\node[vertex](a) at (0,1) {};
	\node[vertex](b) at (150:1) {};
	\node[vertex](c) at (210:1) {};
	\node[vertex](d) at (270:1) {};
	\node[vertex](e) at (330:1) {};
	\node[vertex](f) at (30:1) {};
	\node[vertex](g) at (0,0) {};
	\draw (a) -- (b) -- (c) -- (d) -- (e) -- (f) -- (a);
	\draw (a) -- (g) -- (d);
\end{tikzpicture}
\end{minipage}%
\begin{minipage}{.20\textwidth}
\begin{tikzpicture}
	\node[vertex] (a)   at (0,-1) {};
	\node[vertex] (b)   at (-162:1) {}; 
	\node[vertex] (c) at (-234:1) {};
	\node[vertex] (d) at (-306:1)   {};
	\node[vertex] (e) at (-18:1)  {};
	\node[vertex] (f) at (0,-.33) {};
	\node[vertex] (g) at (0,.33) {};
		
	\draw (a) -- (b) -- (c) -- (d) -- (e) -- (a);
	\draw (d) -- (g) -- (c);
	\draw (g) -- (f) -- (a);
\end{tikzpicture}
\end{minipage}%
\end{tabular}
\caption{Catalog of all connected digital images up to homotopy equivalence on $n$ points for $n\le 7$\label{catalogfig}}
\end{center}
\end{figure}
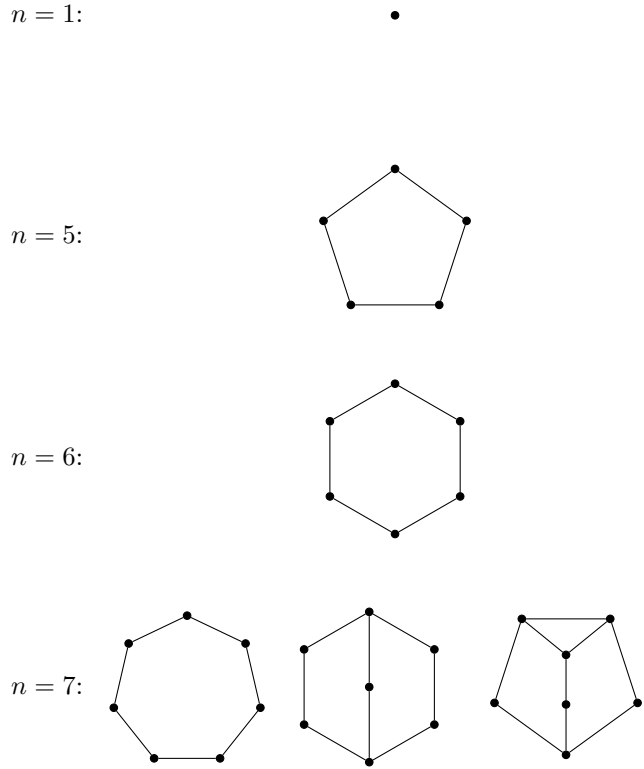

In this section, we produce an exhaustive list of all digital images of seven points or fewer up to homotopy equivalence. 
Our results are shown in Figure \ref{catalogfig}. We show that all images in the figure are of different homotopy types, and that no other images can appear in such a chart.

We begin with some easily checked criteria which will show that an image is reducible.
\begin{lem}\label{simplexreduction} 
If there exists a complete subgraph $S \subseteq X$ so that all vertices of $X$ are adjacent to points of $S$, then $X$ is homotopy equivalent to a point.
\end{lem}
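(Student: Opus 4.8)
\emph{Proof proposal.} The plan is to collapse $X$ onto the clique $S$ by a single-step homotopy, and then collapse $S$ onto one of its vertices by another single-step homotopy; splicing these together exhibits $\id_X$ as homotopic to a constant map, which is exactly what is needed to conclude that $X$ is homotopy equivalent to a point.

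First I would build a ``retraction'' $r:X\to S$. For each $x\in X$ choose a point $r(x)\in S$ with $x\adjeq r(x)$, which is possible by hypothesis, and for $x\in S$ make the choice $r(x)=x$. This $r$ is automatically continuous: if $x\adj y$ then $r(x),r(y)\in S$ and, $S$ being a complete subgraph, $r(x)\adjeq r(y)$. Moreover $r\simeq\id_X$ in one step via $H(x,0)=x$, $H(x,1)=r(x)$: the time-slices $H(\cdot,0)=\id_X$ and $H(\cdot,1)=r$ are continuous, and for fixed $x$ the point-track $(x,r(x))$ is continuous because $x\adjeq r(x)$.

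Next I would contract $S$. Fix $s_0\in S$. The constant map $c\colon S\to S$, $c(s)=s_0$, satisfies $c\simeq\id_S$ in one step, since for each $s\in S$ the path $(s,s_0)$ is continuous by completeness of $S$. Gluing the two homotopies, $\id_X$ is homotopic, over $[0,2]_\Z$, to the constant map $X\to\{s_0\}$: explicitly $H(x,0)=x$, $H(x,1)=r(x)$, $H(x,2)=s_0$, where the point-track $(x,r(x),s_0)$ is continuous because $x\adjeq r(x)$ and $r(x)\adjeq s_0$. Letting $i\colon\{s_0\}\hookrightarrow X$ and $k\colon X\to\{s_0\}$ be the inclusion and the constant map, we get $k\circ i=\id_{\{s_0\}}$ and $i\circ k\simeq\id_X$, so $X$ is homotopy equivalent to the one-point image $\{s_0\}$.

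There is essentially no hard step here; the only points requiring care are that $r$ be well defined (an arbitrary choice function works) and that the axioms of Definition \ref{homotopy} be checked on the nose — spatial continuity of each time-slice and temporal continuity of each point-track — all of which reduce to the single observation that any two points of the clique $S$ are adjacent or equal. If one prefers, the first homotopy together with Lemma \ref{nonsurj} already shows $X$ is reducible to $S$ whenever $S\subsetneq X$, and one then finishes by contracting the complete image $S$; but the direct two-step homotopy above avoids even that case distinction.
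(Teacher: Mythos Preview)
Your proof is correct and follows essentially the same approach as the paper: both construct the two-step homotopy $H(x,0)=x$, $H(x,1)\in S$ (an adjacent point in the clique), $H(x,2)=s_0$, and conclude via the constant map and inclusion that $X$ is homotopy equivalent to a point. Your write-up is simply more explicit about verifying continuity of the intermediate map $r$ and of the homotopy axioms, which the paper leaves implicit.
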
 

\begin{proof}
Let $f:X \rightarrow \{ s \}$ for some $s \in S$ be a constant map and $g:\{ s \} \rightarrow X$ be the inclusion map. Then $f \circ g: \{ s \} \rightarrow \{ s \}$ is the identity map and $g \circ f: X \rightarrow X$ is the constant map to $s$. Consider the homotopy $H: X \times [0,2]_\Z \rightarrow X$ defined by $H(x,0)=x$, the map $H(x,1)$ takes each point to an adjacent point in $S$, which must exist by hypothesis, and $H(x,2)=s$. This shows that $g \circ f \simeq \id_{X}$ and therefore $X$ is homotopy equivalent to $\{s\}$.
\end{proof} 

For $x\in X$, define $N(x)\subseteq X$ to be the set of points $x$ and all points adjacent to $x$. In other words $N(x)= \{ y \mid y \adjeq x \}$.

\begin{lem}\label{pointreduction}
If there exists $x,y \in X$ so that $N(x) \subseteq N(y)$, then $X$ is reducible. In particular $X$ is homotopy equivalent to $X-\{ x \}$.
\end{lem}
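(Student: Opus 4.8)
The plan is to exhibit an explicit homotopy equivalence between $X$ and $X - \{x\}$ by "retracting" $x$ onto $y$. The natural candidate is the map $r: X \to X - \{x\}$ given by $r(x) = y$ and $r(z) = z$ for all $z \neq x$, together with the inclusion $i: X - \{x\} \to X$. First I would check that $r$ is continuous: the only adjacencies that need attention involve $x$, so suppose $z \adj x$ in $X$; then $z \in N(x) \subseteq N(y)$, hence $z \adjeq y = r(x)$, and since $z \neq x$ we have $r(z) = z \adjeq r(x)$, as required. (The adjacency $x \adjeq y$ needed here is exactly the case $z = x$: $x \in N(x) \subseteq N(y)$ gives $x \adjeq y$.)

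Next, $r \circ i = \id_{X - \{x\}}$ on the nose, since $i$ never outputs $x$ and $r$ fixes everything except $x$. It remains to show $i \circ r \simeq \id_X$. The map $i \circ r : X \to X$ is precisely the selfmap sending $x \mapsto y$ and fixing all other points. I would define $H: X \times [0,1]_\Z \to X$ by $H(z,0) = z$ for all $z$ and $H(z,1) = (i\circ r)(z)$, i.e. a one-step homotopy. The space-continuity at $t = 1$ is just continuity of $i \circ r$, which follows from continuity of $r$ established above. The time-continuity (for fixed $z$, the map $t \mapsto H(z,t)$ is a path $[0,1]_\Z \to X$) requires $H(z,0) \adjeq H(z,1)$: for $z \neq x$ both values equal $z$, and for $z = x$ we need $x \adjeq y$, which we already have.

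The main obstacle — and it is a mild one — is verifying the continuity of $r$ (equivalently of $i \circ r$) cleanly, since that is the one place the hypothesis $N(x) \subseteq N(y)$ does real work; everything else is bookkeeping. One subtlety worth stating explicitly is the degenerate possibility that $X = N(x)$ has $x$ adjacent to $y$ only, or even that collapsing $x$ onto $y$ could in principle disconnect nothing but must be checked not to create problems — but in fact no such issue arises, because $r$ is defined on all of $X$ and its continuity has been checked directly without reference to connectivity. So the argument is complete once the three facts $r$ continuous, $r \circ i = \id$, and $i \circ r \simeq \id_X$ in one step are assembled, giving the homotopy equivalence $X \simeq X - \{x\}$ and in particular reducibility of $X$.
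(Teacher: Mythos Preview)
Your proposal is correct and follows essentially the same approach as the paper: define the retraction $r$ sending $x\mapsto y$ and fixing everything else, pair it with the inclusion, observe $r\circ i = \id_{X-\{x\}}$, and verify $i\circ r \simeq \id_X$ by a one-step homotopy. In fact you supply more detail than the paper does, explicitly checking continuity of $r$ and the time-continuity of $H$ using the hypothesis $N(x)\subseteq N(y)$, which the paper leaves to the reader.
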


\begin{proof} 
Let $f:X \rightarrow X- \{x \}$ map $x$ to $y$ and all other points to themselves, and let $g:X - \{ x \} \rightarrow X$ be the inclusion map. Then $f \circ g: X-\{ x \} \rightarrow X-\{ x \}$ is the identity map and $g \circ f: X \rightarrow X$ maps $x$ to $y$ and all other points to themselves. The homotopy $H:X \times [0,1] \rightarrow X$ defined $H(y,0)=y$ and $H(y,1)=g \circ f(y)$ shows that $g \circ f \simeq \id_{X}$. Thus $X$ is homotopy equivalent to $X-\{ x \}$.
\end{proof}

We obtain several immediate corollaries to the above:
\begin{cor}\label{simplexneighborhood}
If there is any $x\in X$ such that $N(x)$ is a nontrivial complete subgraph of $X$, then $X$ is homotopy equivalent to $X - \{x\}$.
\end{cor}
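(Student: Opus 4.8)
The plan is to reduce this immediately to Lemma \ref{pointreduction}, which already does all the homotopy-theoretic work: it suffices to exhibit a point $y \in X$ with $y \neq x$ such that $N(x) \subseteq N(y)$. The only hypothesis available is that $N(x)$ is a nontrivial complete subgraph, so the first step is to unpack this: ``nontrivial'' means $N(x)$ has more than one point, hence $x$ has at least one neighbor, and we simply take $y$ to be any point with $y \adj x$. Then $y \in N(x)$ and $y \neq x$ (adjacency is antireflexive), so $y$ is a legitimate candidate.

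Next I would verify the inclusion $N(x) \subseteq N(y)$ directly. Let $z \in N(x)$, so $z \adjeq x$; we must show $z \adjeq y$. There are three cases. If $z = y$, then $z \in N(y)$ trivially. If $z = x$, then $z = x \adj y$, so $z \in N(y)$. Finally, if $z$ is distinct from both $x$ and $y$, then $z \adj x$ (it can't equal $x$), so both $z$ and $y$ lie in $N(x)$; since $N(x)$ is a complete subgraph and $z \neq y$, we get $z \adj y$, hence $z \in N(y)$. This exhausts the cases, so $N(x) \subseteq N(y)$.

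With the inclusion in hand, Lemma \ref{pointreduction} applies verbatim and yields that $X$ is homotopy equivalent to $X - \{x\}$, which is exactly the claim.

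\textbf{Anticipated main obstacle.} There is essentially no obstacle here — the content is entirely in Lemma \ref{pointreduction}. The only point requiring any care is the bookkeeping around the word ``nontrivial'': one must make sure $x$ genuinely has a neighbor so that the chosen $y$ exists and is different from $x$, and one must handle the degenerate subcases $z = x$ and $z = y$ when checking $N(x)\subseteq N(y)$. Once those are dispatched, the corollary is immediate.
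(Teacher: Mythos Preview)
Your proposal is correct and follows exactly the same approach as the paper's proof: pick any $y \adj x$ (which exists by nontriviality of $N(x)$), observe that completeness of $N(x)$ forces $N(x) \subseteq N(y)$, and invoke Lemma~\ref{pointreduction}. The paper's version is simply terser, asserting the inclusion without the case analysis you spell out.
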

\begin{proof}
If $N(x)$ is a complete subgraph then automatically we have $N(x) \subseteq N(y)$ for any $y \adj x$ and so $X$ is reducible to $X-\{x\}$ by Lemma \ref{pointreduction}.
\end{proof}

If $x\in X$ is a vertex of degree 1, then $N(x) = \{x,y\}$ for some $y\in X$, and so $N(x)$ is a complete subgraph. Thus we have:
\begin{cor}\label{degreeone}
If $x\in X$ is a vertex of degree 1, then $X$ is homotopy equivalent to $X-\{x\}$.
\end{cor}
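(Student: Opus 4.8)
The plan is to deduce this directly from Corollary~\ref{simplexneighborhood}, since the substantive work has already been done. First I would identify the neighborhood $N(x)$ explicitly: because $x$ has degree $1$, it has exactly one neighbor $y$, so $N(x) = \{x,y\}$. This two-element set is a complete subgraph of $X$: the only pair of distinct points it contains is $\{x,y\}$, and $x \adj y$ by hypothesis. It is moreover \emph{nontrivial} in the sense of Corollary~\ref{simplexneighborhood}, since it has more than one vertex. Applying Corollary~\ref{simplexneighborhood} to this $x$ then immediately gives that $X$ is homotopy equivalent to $X - \{x\}$, which is the desired conclusion.

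Alternatively, one could skip Corollary~\ref{simplexneighborhood} and appeal to Lemma~\ref{pointreduction} directly: with $N(x) = \{x,y\}$ we have $y \in N(y)$ trivially and $x \in N(y)$ since $x \adj y$, so $N(x) \subseteq N(y)$, and the lemma yields $X \simeq X - \{x\}$ with $x$ mapped to $y$.

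There is essentially no obstacle here; the result is a one-line corollary. The only points requiring any care are the elementary observations that a degree-one vertex has exactly one neighbor (so that $N(x)$ is automatically a complete subgraph on two vertices) and that "nontrivial" is satisfied because $|N(x)| = 2 > 1$. I would keep the written proof to a single sentence invoking Corollary~\ref{simplexneighborhood}.
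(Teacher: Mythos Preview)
Your proposal is correct and matches the paper's own proof essentially verbatim: the paper observes that a degree-one vertex $x$ has $N(x)=\{x,y\}$ for its unique neighbor $y$, notes this is a complete subgraph, and invokes Corollary~\ref{simplexneighborhood}. Your alternative via Lemma~\ref{pointreduction} is also valid but not needed.
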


Since nontrivial trees always include vertices of degree 1, and the above construction can remove them one at a time, we obtain:

\begin{cor}
If $X$ is a tree, then $X$ is homotopy equivalent to a point.
\end{cor}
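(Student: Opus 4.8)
The statement to prove is the corollary that if $X$ is a tree, then $X$ is homotopy equivalent to a point. The plan is to induct on the number of vertices of $X$, using Corollary~\ref{degreeone} as the engine that removes a single degree-one vertex at a time while preserving the homotopy type. First I would dispose of the base case: if $X$ has a single point, then $X$ is already a point and there is nothing to prove (a one-point image is trivially homotopy equivalent to itself).

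For the inductive step, suppose $X$ is a tree with $n \ge 2$ vertices and that the result holds for all trees with fewer vertices. The key structural fact I would invoke is that every finite tree with at least two vertices has a vertex of degree one (a leaf); this is the standard graph-theoretic observation that a finite acyclic connected graph has at least two leaves, which follows, for instance, by taking an endpoint of a longest path. Let $x \in X$ be such a leaf. By Corollary~\ref{degreeone}, $X$ is homotopy equivalent to $X - \{x\}$. Now $X - \{x\}$ is again a tree: deleting a leaf from a tree leaves a connected acyclic graph, hence a tree, and it has $n-1 < n$ vertices. By the inductive hypothesis, $X - \{x\}$ is homotopy equivalent to a point. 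Since homotopy equivalence is transitive (as noted after Definition~\ref{homotopyequivalence}, citing \cite{boxer99}), $X$ is homotopy equivalent to a point, completing the induction.

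I do not anticipate a serious obstacle here; the corollary is essentially immediate once Corollary~\ref{degreeone} is in hand. The only points requiring a moment of care are the two graph-theoretic facts that make the induction go through: that a finite tree on $\ge 2$ vertices has a leaf, and that removing a leaf from a tree yields another tree. Both are classical and can be stated without proof, or dispatched in a single sentence each. A secondary bookkeeping point is to make sure the notion of "tree'' being used — a connected image whose underlying simple graph is acyclic, i.e., contains no $C_m$ for any $m \ge 3$ — is the one intended; under that reading the argument above is complete. If one instead wanted a non-inductive phrasing, one could simply observe that iterating the leaf-removal of Corollary~\ref{degreeone} finitely many times (since $X$ is finite) terminates at a single point, with each step a homotopy equivalence; this is the informal description already given in the text ("the above construction can remove them one at a time''), and either phrasing suffices.
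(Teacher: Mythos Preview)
Your proposal is correct and follows exactly the approach the paper indicates: the paper's entire justification is the single sentence preceding the corollary (``Since nontrivial trees always include vertices of degree 1, and the above construction can remove them one at a time, we obtain:''), and your write-up is simply a careful formalization of that sentence via induction on the number of vertices using Corollary~\ref{degreeone}. There is nothing to add or change.
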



Our next theorem is a generalization of Lemma \ref{pointreduction}, which collapses one point into another adjacent point. In the following, we see that an entire path can be collapsed into another path if the proper adjacencies are present.

\begin{thm}\label{pathreduction}
If there exist injective paths $p,q: [0,k]_\Z \to X$, such that $p(i) \adjeq q(i)$  and  $N(p(0)) \subseteq N(q(0))\cup \{p(1)\}$ and $N(p(k)) \subseteq N(q(k)) \cup \{p(k-1)\}$ and  $N(p(i)) \subset N(q(i)) \cup \{ p(i-1), p(i+1) \}$ for all $i\in\{1,2,\dots, k-1\}$, then $X$ is homotopy equivalent to $X - (p([0,k])-q([0,k]))$.
\end{thm}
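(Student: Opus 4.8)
The plan is to build the homotopy equivalence explicitly, collapsing the path $p$ onto the path $q$. Write $D = p([0,k])-q([0,k])$ for the set of points to be deleted, let $Y = X - D$, let $g: Y\to X$ be the inclusion, and define $f: X\to Y$ by $f(p(i)) = q(i)$ for each $i\in[0,k]_\Z$ and $f(x) = x$ for $x\notin p([0,k])$. Since $p$ is injective the two clauses are consistent, and $f$ does land in $Y$ because no point of $q([0,k])$ lies in $D$. Granting for the moment that $f$ is continuous, both composites are homotopic to the identity in one step: for $g\circ f\simeq\id_X$ take $H(\cdot,0)=\id_X$ and $H(\cdot,1)=g\circ f$, where the only thing to check is $x\adjeq (g\circ f)(x)$, an equality off $p([0,k])$ and precisely the hypothesis $p(i)\adjeq q(i)$ on $p([0,k])$; and for $f\circ g\simeq\id_Y$ observe that $f\circ g = f|_Y$ and rerun the same argument inside $Y$, where the only points moved are those $p(i)$ that also lie on $q$ and again $p(i)\adjeq q(i)$ holds within $Y$. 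So the whole theorem reduces to continuity of $f$.

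To prove $f$ continuous it suffices to check $f(x)\adjeq f(x')$ for every edge $x\adj x'$ of $X$, and the engine of the argument is the following consequence of the neighborhood hypotheses: if $x\adj p(i)$ and $x$ is not one of the (at most two) path-neighbors $p(i-1),p(i+1)$ of $p(i)$, then from $x\in N(p(i))\subseteq N(q(i))\cup\{p(i-1),p(i+1)\}$ we conclude $x\adjeq q(i)$ — the endpoint hypotheses being the special cases $i=0,k$. Now take an edge $x\adj x'$. If neither endpoint lies on $p([0,k])$, then $f$ fixes both. If exactly one does, say $x=p(i)$, then $x'$ is certainly not $p(i\pm1)$, so the observation gives $f(x')=x'\adjeq q(i)=f(x)$. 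If $x=p(i)$ and $x'=p(i+1)$ for some $i$, then $f(x)=q(i)\adjeq q(i+1)=f(x')$ because $q$ is a path. When $p$ is a simple path these exhaust all cases and the proof is complete.

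The remaining case, and the step I expect to be the main obstacle, is an edge $p(i)\adj p(j)$ of $X$ with $j\ge i+2$, a ``chord'' of the merely injective path $p$, for which one must supply $q(i)\adjeq q(j)$. The natural approach is to propagate the chord of $p$ through the neighborhood containments to a chord of $q$: apply the observation at parameter $i$ (noting $p(j)$ is not a path-neighbor of $p(i)$) to get $p(j)\adjeq q(i)$; if $p(j)=q(i)$ then $p(j)\adjeq q(j)$ finishes it, and otherwise apply the observation at parameter $j$ with the point $q(i)$ to get $q(i)\adjeq q(j)$. The delicate point is that this last application needs $q(i)\ne p(j-1),p(j+1)$, and if $q(i)$ does coincide with a path-neighbor of $p(j)$ one must work harder, peeling the case off using injectivity of $p$ and $q$ together with the hypotheses at the adjacent parameters. (This paragraph is vacuous when $p$ is a simple path.)
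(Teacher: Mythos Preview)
Your approach is exactly the paper's: set up the inclusion and the ``collapse $p$ onto $q$'' map and verify that the two composites are each homotopic to the identity in one step. (The paper's write-up actually has the roles of its $f$ and $g$ transposed and claims one composite is literally the identity, which need not hold when $p([0,k])$ and $q([0,k])$ overlap; your treatment of that composite as $f|_Y\simeq\id_Y$ is the correct formulation.)

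Where you go beyond the paper is in the continuity of the collapsing map. The paper simply asserts that ``continuity of $H$ follows by the assumptions of the theorem'' and moves on. You break the verification into cases and correctly isolate the one nontrivial case: a chord $p(i)\adj p(j)$ with $|i-j|\ge 2$, where one must produce $q(i)\adjeq q(j)$. Your two-step push (first through $N(p(i))$, then through $N(p(j))$) handles all but the residual situation $q(i)\in\{p(j-1),p(j+1)\}$; by the symmetric argument the only genuinely open subcase is when simultaneously $q(j)\in\{p(i-1),p(i+1)\}$, and there the hypotheses $p(i)\adjeq q(i)$ and injectivity of $p$ manufacture shorter chords of $p$, suggesting the descent you allude to. You have not closed this, but neither does the paper.

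In practice this gap is harmless for the paper's purposes: every invocation of the theorem in the Appendix uses paths of length at most $3$ that are visibly simple in their ambient image, so your completed argument for simple $p$ already covers all the applications. If you want a clean statement with a fully rigorous proof, the simplest fix is to add ``simple'' to the hypothesis on $p$; nothing in the paper needs more.
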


\begin{proof} 
Let $W = p([0,k]_\Z)$ and $Z = q([0,k]_\Z)$. 
Let $f:X \rightarrow X-(W-Z)$ be the inclusion map (or the identity if $W-Z$ is empty). Let $g:X - (W-Z) \rightarrow X$ map $p(i)$ to $q(i)$ for all $i$ and all other points to themselves. Then $f \circ g: X-(W-Z) \rightarrow X-(W-Z)$ is the identity map and $g \circ f: X \rightarrow X$ maps $p(i)$ to $q(i)$ for all $i$ and all other points to themselves. Let $H:X \times [0,1] \rightarrow X$ be defined by $H(x,0)=x$ and $H(x,1)=f \circ g(x)$. Continuity of $H$ follows by the assumptions of the theorem, and so $f \circ g \simeq \id_{X}$. Thus $X$ is homotopy equivalent to $X-(W-Z)$.
\end{proof}

We have one final reduction theorem which we will use in developing our catalog of images:

\begin{thm}\label{chordal}
If $X$ has no simple $m$-loop for any $m\ge 4$, then $X$ is homotopy equivalent to a point.
\end{thm}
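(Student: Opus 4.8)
The plan is to use induction on the number of points of $X$, exploiting the fact that a connected image with no simple $m$-loop for $m \ge 4$ is very close to being a "chordal graph" in the graph-theoretic sense. First I would handle the trivial base case of one point. For the inductive step, the strategy is to find a single point that can be deleted via one of the reduction lemmas already established, reducing to a smaller image of the same type (no simple $m$-loops for $m\ge 4$), which is homotopy equivalent to a point by induction; then transitivity of homotopy equivalence finishes the job.

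The key structural input I would extract is: \emph{every connected image $X$ with no simple $m$-loop for $m\ge 4$ has a vertex $x$ whose neighborhood $N(x)$ is a complete subgraph} (a so-called simplicial vertex). This is the digital analogue of the classical fact that chordal graphs have simplicial vertices. Concretely, one can argue as follows. If $X$ has no simple $3$-loop either, then $X$ has no cycles at all, so $X$ is a tree and the result is immediate from the corollary that trees are homotopy equivalent to a point; so assume $X$ contains a simple $3$-loop (triangle). Take a maximal complete subgraph $S$ of $X$ with $\#S \ge 2$. If every vertex of $X$ is adjacent to a point of $S$, we are done immediately by Lemma~\ref{simplexreduction}. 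Otherwise, pick a vertex $x$ in $S$ adjacent to a vertex $w\notin S$ with $w$ not adjacent to all of $S$ (such configurations are what we must rule out or exploit); using the absence of simple $4$-loops, any two neighbors of $x$ that are not adjacent would, together with a path avoiding $x$, create a simple loop of length $\ge 4$, which forces strong restrictions. The cleanest route is: among all vertices, choose $x$ maximizing $\#N(x)$, or run the standard chordal-graph simplicial-vertex argument (pick a vertex, look at a connected component of its non-neighbors, and descend), to produce a vertex whose closed neighborhood is complete.

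Once such an $x$ is found, Corollary~\ref{simplexneighborhood} gives that $X$ is homotopy equivalent to $X - \{x\}$. The deletion of a simplicial vertex cannot create a new simple $m$-loop (deleting points never adds loops), so $X - \{x\}$ still has no simple $m$-loop for $m\ge 4$; it may however become disconnected. In that case apply the inductive hypothesis to each connected component — each is homotopy equivalent to a point — but wait: that only shows $X-\{x\}$ is homotopy equivalent to a disjoint union of points, which need not be a point. To avoid this I would instead ensure the deleted vertex does not disconnect the image, i.e. choose $x$ to be a simplicial vertex that is not a cut vertex. In a chordal graph one can always find a simplicial vertex inside a single "leaf" block (a block that contains at most one cut vertex), and such a vertex is not a cut vertex of the whole graph; this keeps $X-\{x\}$ connected, and induction applies directly to give $X-\{x\}\simeq \{pt\}$, hence $X\simeq\{pt\}$.

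The main obstacle will be the existence of a non-cut simplicial vertex: proving that a connected digital image with no simple $m$-loop for $m\ge 4$ has a simplicial vertex lying in a leaf block. The simplicial-vertex existence itself is the heart of chordal graph theory and must be reproved in this language (digital images as simple graphs, where "simple $m$-loop" for $m\ge 4$ corresponds exactly to a chordless cycle of length $\ge 4$); the "leaf block" refinement is then a routine block-decomposition argument. I expect the write-up to lean on a careful statement that the absence of simple $m$-loops ($m\ge 4$) is precisely chordality, after which standard techniques apply, with the reductions supplied by Lemma~\ref{simplexreduction}, Lemma~\ref{pointreduction}, and Corollary~\ref{simplexneighborhood}.
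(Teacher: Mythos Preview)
Your approach is essentially the paper's: induct on $\#X$, recognize that the hypothesis makes $X$ chordal, extract a simplicial vertex $x$, and apply Corollary~\ref{simplexneighborhood} to reduce to $X-\{x\}$. The paper simply cites the perfect elimination ordering characterization of chordal graphs \cite{dirac61,gross65} rather than re-deriving the existence of a simplicial vertex by hand. Your connectivity worry is unnecessary: a simplicial vertex $x$ is never a cut vertex, since any path $\dots u - x - v \dots$ through $x$ has $u,v$ both in the clique $N(x)$, hence $u\adj v$ and the path shortcuts to $\dots u - v \dots$; so $X-\{x\}$ stays connected automatically and the leaf-block refinement can be dropped.
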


\begin{proof}
Our proof is by induction on $n$, the number of vertices of $X$. If $n=1$ then there is nothing to show. 

Viewing $X$ as a graph, our condition that there are no simple $m$-loops of degree $4$ means that $X$ is chordal \cite{diestel10}. A fundamental theorem in the study of chordal graphs is that a graph is chordal if and only if it admits a perfect elimination ordering \cite{dirac61, gross65}. A perfect elimination ordering is an ordering of the vertices $X = \{x_1, \dots, x_n\}$ such that for any $i$, the vertex $x_i$ together with any of its neighbors $x_j$ with $j>i$ form a complete subgraph of $X$.

Assuming our image $X$ has a perfect elimination ordering in this way, consider in particular the first point $x_1$. The condition above means that $x_1$, together with all its neighbors, forms a complete subgraph. Thus $N(x_1)$ is a complete subgraph and so $X$ is homotopy equivalent to $X-\{x_1\}$ by Corollary \ref{simplexneighborhood}. By induction $X-\{x_1\}$ is homotopy equivalent to a point, and thus by transitivity $X$ is reducible to a point.
\end{proof}

The statement of the above theorem can be strengthened in an obvious way which seems to be true; however, we have been unable to prove it, so we state it as a conjecture:

\begin{conj}
If $X$ has no simple $m$-loop for any $m\ge 5$, then $X$ is homotopy equivalent to a point.
\end{conj}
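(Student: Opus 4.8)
The plan is to mimic the proof of Theorem \ref{chordal} as closely as possible, replacing the appeal to chordality and perfect elimination orderings with an appeal to a structural fact about graphs with no simple $m$-loop for $m\ge 5$, i.e. graphs whose only induced cycles are triangles. Call such a graph \emph{near-chordal}. The key observation is that Theorem \ref{chordal} really only needed one thing from the elimination ordering: that $X$ contains a vertex $x$ with $N(x)$ a complete subgraph, so that Corollary \ref{simplexneighborhood} applies and we can delete $x$. So the heart of the matter is to prove: \emph{every finite near-chordal graph $X$ on at least $2$ vertices contains a vertex $x$ such that $N(x)$ induces a complete subgraph, and moreover $X-\{x\}$ is again near-chordal.} Given such a vertex, Corollary \ref{simplexneighborhood} reduces $X$ to $X-\{x\}$, and induction on the number of vertices (base case a single point, as in Theorem \ref{chordal}) finishes the argument. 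The stability of near-chordality under vertex deletion is automatic, since deleting a vertex cannot create a new induced cycle.

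The main step is therefore the combinatorial lemma about finding a ``simplicial-neighborhood'' vertex. First I would try to reduce to the $2$-connected case: if $X$ has a cut vertex, it decomposes into blocks, and by induction on the block structure one of the leaf blocks supplies the needed vertex (a vertex of a leaf block other than its cut vertex). If $X$ itself is $2$-connected and near-chordal, I would argue directly. Here is the idea: take any vertex $v$ and let $G$ be the subgraph induced on $N(v)\setminus\{v\}$. Near-chordality forces strong structure on $G$; in a genuinely chordal graph $G$ would itself be chordal, but here the absence of \emph{induced} cycles of length $\ge 5$ means $G$ has no induced cycle of length $\ge 5$ either (since an induced $\ell$-cycle in $G$ is an induced $\ell$-cycle in $X$), and I want to leverage a $4$-cycle–or–triangle dichotomy inside $N(v)$ together with $2$-connectivity. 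The cleanest route may actually be: among all vertices, pick one whose closed neighborhood $N(x)$ is inclusion-minimal; then use near-chordality to show $N(x)$ must be complete, deriving a contradiction from any non-adjacent pair $y,z\in N(x)$ by chasing a shortest $y$–$z$ path avoiding $x$ (which exists by $2$-connectivity) to produce an induced cycle of length $\ge 5$, unless that path has length exactly $2$ or $3$, and in those small cases using minimality of $N(x)$ to move to a strictly smaller closed neighborhood.

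The step I expect to be the real obstacle is exactly this last one: controlling the short $y$–$z$ paths (lengths $2$ and $3$) that near-chordality \emph{does} permit, since triangles and $4$-cycles are allowed, and these are precisely the configurations that block a naive induction. This is presumably why the authors could only manage Theorem \ref{chordal} (no $m$-loop for $m\ge4$, i.e. honestly chordal, where $4$-cycles are also forbidden and the perfect-elimination-ordering machinery is available off the shelf) and had to leave the $m\ge5$ version open. A possible way through is to not insist on a simplicial vertex in the strict graph-theoretic sense but only on the weaker conclusion of Corollary \ref{simplexneighborhood} or of Lemma \ref{pointreduction}, i.e.\ to find \emph{some} pair $x,y$ with $N(x)\subseteq N(y)$; allowing $N(x)$ to be a nontrivial complete graph or allowing a domination relation gives more room, and one might hope that in a near-chordal graph some such dominated vertex always exists. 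Proving that existence statement — or finding the counterexample that shows the conjecture is actually false — is the crux, and I do not see a short argument; it likely requires a careful case analysis of the local structure around a vertex of minimum degree or around a shortest induced $4$-cycle.
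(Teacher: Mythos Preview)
The statement you are trying to prove is listed in the paper as a \emph{conjecture}; the authors explicitly say they were unable to prove it, so there is no paper proof to compare against. Your write-up is honest about where the real difficulty lies, but the specific inductive scheme you propose cannot work as stated, and there is a small concrete graph that shows why.

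Take $X=K_{3,3}$, the complete bipartite graph on parts $\{a,b,c\}$ and $\{d,e,f\}$. Every induced cycle in $K_{3,3}$ has length $4$ (there are no odd cycles, and any $6$-cycle has chords), so $K_{3,3}$ is near-chordal in your sense: it has no simple $m$-loop for $m\ge 5$. But for every vertex $x$, the open neighbourhood is an independent set of size $3$, so $N(x)$ is never a complete subgraph and Corollary~\ref{simplexneighborhood} never applies. Moreover no vertex is dominated in the sense of Lemma~\ref{pointreduction}: for $x,y$ in the same part, $x\notin N(y)$, and for $x,y$ in opposite parts, the other two vertices of $x$'s part lie in $N(x)\setminus N(y)$. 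So both branches of your ``find a deletable vertex'' lemma fail simultaneously on $K_{3,3}$, and the induction cannot get started.

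This does not refute the conjecture: $K_{3,3}$ is still homotopy equivalent to a point, for instance by Lemma~\ref{simplexreduction} with $S$ a single edge, or by the rotation-style homotopy used for $C_4$. What it shows is that any proof must sometimes move several vertices at once rather than peel off one simplicial or dominated vertex; the reduction tools you are restricting yourself to (Corollary~\ref{simplexneighborhood} and Lemma~\ref{pointreduction}) are provably insufficient, which is consistent with the authors' own experience in Theorems~\ref{sixpoints} and~\ref{sevenpoints}, where the computer search left over images reducible only by ad hoc homotopies.
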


We are now ready to fully characterize irreducible images on 7 or fewer points. The argument is simplest for images of 4 points or fewer.

\begin{thm}\label{fourpoints}
Let $X$ be a connected image of $4$ points or fewer. Then $X$ is homotopy equivalent to a point.
\end{thm}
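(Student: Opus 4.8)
The plan is to reduce the problem to checking a short finite list of cases, using the reduction lemmas already established. First I would observe that a connected image on at most $4$ points is, as a graph, one of a small number of possibilities: on $1$ point it is trivial; on $2$ points it is a single edge; on $3$ points it is either a path $P_3$ or a triangle $C_3$; on $4$ points it is one of the connected graphs on four vertices (a path $P_4$, a star $K_{1,3}$, a triangle with a pendant edge, a $4$-cycle $C_4$, a $4$-cycle with one chord, or $K_4$). So the entire theorem amounts to showing each of these finitely many images is homotopy equivalent to a point.

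Next I would dispatch these cases using the corollaries already proved. Any tree (the $1$-point image, the edge, $P_3$, $P_4$, $K_{1,3}$) is homotopy equivalent to a point by the corollary on trees (repeatedly removing degree-$1$ vertices via Corollary \ref{degreeone}). For the images containing a triangle --- $C_3$, the triangle-with-pendant, $K_4$, and the $4$-cycle-with-a-chord --- I would note that there is a vertex $x$ whose neighborhood $N(x)$ is a complete subgraph: in $K_4$ every vertex works; in $C_3$ every vertex works; in the triangle-with-pendant the pendant vertex has complete neighborhood (or just use Corollary \ref{degreeone} on it); in the $4$-cycle-with-chord, each of the two endpoints of the chord has a triangle for its neighborhood. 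Hence Corollary \ref{simplexneighborhood} (or directly Lemma \ref{simplexreduction}) lets us delete a point, and we finish by induction on the number of points, since the smaller image is again connected on $\le 4$ points. The only remaining case is $C_4$, the pure $4$-cycle with no chords; but $C_4$ is an image with no simple $m$-loop for any $m\ge 5$ --- in fact no simple $m$-loop for any $m \ne 4$ --- and more simply it is chordal, so Theorem \ref{chordal} applies directly and gives $C_4 \simeq$ point.

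The one step that requires the most care is making sure the case analysis of connected graphs on $4$ vertices is genuinely complete and that each case really does admit a vertex we can delete (keeping connectedness). This is the ``main obstacle,'' though it is a mild one: it is just the standard enumeration of the $6$ connected graphs on four labeled-up-to-isomorphism vertices, and in every case except $C_4$ there is a clique-neighborhood vertex, while $C_4$ is handled separately by chordality. An alternative streamlined write-up would be to avoid the enumeration altogether: observe that if the connected image $X$ on $\le 4$ points contains no triangle and is not $C_4$ then it has at most $4$ vertices and at most $3$ edges (a bipartite triangle-free connected graph on $4$ vertices with $4$ edges must be $C_4$), hence is a tree, so $X$ is a tree, a graph with a clique neighborhood, or $C_4$; the first two collapse by the corollaries and induction, and $C_4$ collapses by Theorem \ref{chordal}. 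Either way, the proof is essentially an application of the already-developed reduction machinery plus a finite check.
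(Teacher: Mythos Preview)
Your enumeration handles every case correctly except the one you single out as ``the only remaining case'': the pure $4$-cycle $C_4$. You claim $C_4$ is chordal and invoke Theorem~\ref{chordal}, but $C_4$ is precisely the prototypical \emph{non}-chordal graph --- it is itself a simple $4$-loop with no chord. Theorem~\ref{chordal} requires $X$ to have no simple $m$-loop for any $m\ge 4$, and $C_4$ fails this hypothesis at $m=4$. (You even note that $C_4$ has a simple $4$-loop in the same sentence, so the appeal to chordality is inconsistent.) The unproved Conjecture following Theorem~\ref{chordal} would cover $C_4$, but you cannot use it. None of your other tools apply either: in $C_4$ no vertex has a complete neighborhood (opposite neighbors are non-adjacent), and no $N(x)\subseteq N(y)$ inclusion holds, so Corollary~\ref{simplexneighborhood} and Lemma~\ref{pointreduction} both fail.

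The paper's proof handles $C_4$ with Lemma~\ref{simplexreduction}: take $S$ to be any edge $\{c_i,c_{i+1}\}$; this is a complete subgraph, and each of the four vertices of $C_4$ is adjacent to one of its endpoints, so $C_4$ collapses to a point. More globally, the paper avoids your enumeration entirely by a two-case split: if $X$ has no simple $4$-loop then (having $\le 4$ points) it has no simple $m$-loop for $m\ge 4$ and Theorem~\ref{chordal} applies; if $X$ does have a simple $4$-loop then $X=C_4$ and Lemma~\ref{simplexreduction} finishes it. Your approach is fine in spirit, but you need to patch the $C_4$ case with this lemma rather than Theorem~\ref{chordal}.
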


\begin{proof}
It suffices to show that all connected images of 2, 3, or 4 points are reducible. If $X$ has no simple 4-loops, then $X$ is reducible to a point by Lemma \ref{chordal}. In the case when $X$ has a simple 4-loop, since $X$ has at most 4 points we have $X=C_4$. But each edge in $C_4$ forms a complete subgraph of 2 vertices which satisfies Lemma \ref{simplexreduction}, and so $X$ is homotopy equivalent to a point.
\end{proof}


\begin{thm}\label{fivepoints}
Let $X$ be a connected image of $5$ points. Then $X$ is homotopy equivalent to a point or to $C_5$.
\end{thm}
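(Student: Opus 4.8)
The plan is to classify connected images on $5$ points by a case analysis on the largest simple loop in $X$. If $X$ has no simple $m$-loop for any $m \ge 4$, then $X$ is homotopy equivalent to a point by Theorem \ref{chordal}, and we are done. So we may assume $X$ contains a simple $m$-loop with $m \in \{4, 5\}$, since $X$ has only $5$ points.

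First I would handle the case where $X$ contains a simple $4$-loop on vertices, say, $\{a,b,c,d\}$ arranged in a cycle $a \adj b \adj c \adj d \adj a$. Let $e$ be the fifth point. I would use the reduction tools from Section \ref{catalog} to collapse $X$ to fewer points: the $4$-loop itself is reducible because any of its edges forms a complete subgraph $S$ of two vertices, so if every vertex is adjacent to an edge of the $4$-loop we can try Lemma \ref{simplexreduction}, but more carefully I would look for a vertex $x$ whose neighborhood $N(x)$ is contained in $N(y)$ for some other $y$, or is a complete subgraph, and apply Lemma \ref{pointreduction} or Corollary \ref{simplexneighborhood}. The key sub-observation is that within the $4$-loop, if any chord is present (say $a \adj c$), then $N(b)$ and $N(d)$ become easier to dominate; and if no chord is present, then $b$ and $d$ are nonadjacent with $N(b) \cap N(d) \supseteq \{a,c\}$, so collapsing $b$ onto $d$ via Lemma \ref{pointreduction} works provided $N(b) \subseteq N(d)$, which one checks by examining how $e$ attaches. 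Running through the few sub-cases of how $e$ connects to $\{a,b,c,d\}$ (and noting any degree-$1$ vertex is removed by Corollary \ref{degreeone}), each reduces to a connected image on $4$ points, hence to a point by Theorem \ref{fourpoints}.

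Next I would handle the case where $X$ has a simple $5$-loop but no simple $4$-loop. Then $X$ consists of the $5$-cycle $c_0 \adj c_1 \adj \dots \adj c_4 \adj c_0$ together with possibly some extra edges (chords). If there are no chords, $X = C_5$ and we are in the second alternative of the theorem. If there is a chord, say $c_0 \adj c_2$, then $(c_0, c_1, c_2)$ is a $3$-loop; now $N(c_1) = \{c_0, c_1, c_2\}$ is a complete subgraph (it has no further vertices since $X$ has $5$ points and $c_1$ can only be adjacent to $c_0, c_2$ without creating a $4$-loop — one must check $c_1 \adj c_3$ or $c_1\adj c_4$ would create a simple $4$-loop), so by Corollary \ref{simplexneighborhood}, $X$ is homotopy equivalent to $X - \{c_1\}$, a connected image on $4$ points, hence a point. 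The bookkeeping here is just verifying that any chord forces a small complete neighborhood given the severe size constraint.

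The main obstacle I anticipate is the case analysis in the $4$-loop case: one must be systematic about the position of the fifth vertex $e$ and about which chords of the $4$-loop are present, and carefully verify the neighborhood-containment hypotheses of Lemma \ref{pointreduction} or Theorem \ref{pathreduction} in each configuration rather than hand-waving. A clean way to organize this is to first dispose of any degree-$1$ vertex (Corollary \ref{degreeone}) and any vertex with complete neighborhood (Corollary \ref{simplexneighborhood}), then observe that in the few remaining configurations on $5$ points some $N(x) \subseteq N(y)$ always holds; since $5$ is small, this is finite but needs care. Once every case collapses the image to $4$ or fewer points, Theorem \ref{fourpoints} finishes the argument, and together with the $C_5$ possibility this gives exactly the two homotopy types claimed.
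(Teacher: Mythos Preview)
Your overall case decomposition matches the paper's: use Theorem \ref{chordal} when there is no simple $m$-loop for $m\ge 4$, then split on whether there is a simple $5$-loop or a simple $4$-loop. Two remarks on the details.

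First, in your simple-$5$-loop case you allow ``extra chords,'' but by the paper's definition a \emph{simple} $m$-loop has no other adjacencies among the points in its image. Since a simple $5$-loop in a $5$-point image uses all five vertices, $X$ must equal $C_5$ on the nose; your chord sub-case is vacuous as stated. (The graphs you had in mind---$5$-cycles with chords---have no simple $5$-loop at all and are already handled by your first case via Theorem \ref{chordal} or by your simple-$4$-loop case.)

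Second, your simple-$4$-loop case can be done in one line, which is what the paper does. With a simple $4$-loop $c_0,c_1,c_2,c_3$ and the fifth point $z$ adjacent (by connectedness) to, say, $c_0$, take $S=\{c_0,c_1\}$. This is a complete subgraph of two points, and every vertex of $X$ is adjacent to a point of $S$: namely $c_3$ and $z$ to $c_0$, and $c_2$ to $c_1$. Lemma \ref{simplexreduction} then collapses $X$ to a point immediately, with no further case analysis on chords or on how $z$ attaches. Your route via Lemma \ref{pointreduction} and Corollaries \ref{simplexneighborhood} and \ref{degreeone} would also succeed after the finitely many checks you describe, but it is considerably more work than needed---you actually already gestured at the short argument in your parenthetical about ``every vertex is adjacent to an edge of the $4$-loop,'' and that is exactly the observation that finishes it.
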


\begin{proof}
We will show that all connected images of $5$ points are reducible except for $C_5$. The proof is essentially the same as for Theorem \ref{fourpoints}. By Lemma \ref{chordal}, if $X$ has no simple 4-, or 5-loops then $X$ is reducible. If $X$ has a simple 5-loop, then $X = C_5$ and there is nothing to show.

It remains to consider the case where $X$ has a simple 4-loop, say of points $c_0,c_1,c_2,c_3 \in X$. Let $z$ be the point of $X$ which is not part of the 4-loop, and without loss of generality assume that $z$ is adjacent to $c_0\in X$ (since $X$ is connected, $z$ must be adjacent to something). Then $\{ c_0,c_1 \}$ is a complete subgraph of 2 points that satisfies Lemma \ref{simplexreduction} so $X$ is homotopy equivalent to a point.
\end{proof}

It should be possible to prove the next few theorems in the style of our proofs of Theorems \ref{fourpoints} and \ref{fivepoints}, but treating all possible configurations for an image of 6 or more points becomes cumbersome. Instead we use a computer search to narrow down the possible candidates for $X$. (Similar computer searches could also be used to immediately prove Theorems \ref{fourpoints} and \ref{fivepoints}.) This is done easily with standard computer algebra systems to remove from consideration any images which are reducible by Lemmas \ref{simplexreduction} and \ref{pointreduction}. We have used the open source software Sage, and our source code is available for testing at the last author's website.\footnote{\url{http://faculty.fairfield.edu/cstaecker}}


\begin{thm}\label{sixpoints}
Let $X$ be an irreducible image of $6$ points. Then $X$ is homotopy equivalent to $C_6$.
\end{thm}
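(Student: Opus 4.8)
The plan is to reduce the problem to a finite computer search, much as the paper suggests, but to structure that search so that the only surviving candidate is $C_6$. First I would invoke the reduction lemmas already available: by Lemma~\ref{chordal}, if $X$ has no simple $m$-loop for $m\ge 4$ then $X$ is homotopy equivalent to a point, hence reducible; so we may assume $X$ contains a simple $m$-loop for some $m\in\{4,5,6\}$. The three cases are handled in increasing order of loop length. If $X$ contains a simple $6$-loop, then since $\#X=6$ that loop uses all the points and $X$ is obtained from $C_6$ by possibly adding chords. I would then argue that any added chord creates a vertex $x$ whose neighborhood $N(x)$ is contained in $N(y)$ for some adjacent $y$ (or makes some $N(x)$ complete), so $X$ is reducible by Lemma~\ref{pointreduction} or Corollary~\ref{simplexneighborhood}; the only chordless possibility is $C_6$ itself. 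This is where the computer search does the bookkeeping: enumerate all graphs on $6$ vertices containing a Hamiltonian cycle, discard those reducible by Lemmas~\ref{simplexreduction} and~\ref{pointreduction}, and check that $C_6$ is the unique survivor.

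For the case where the longest simple loop has length $5$, let the loop be on $c_0,\dots,c_4$ and let $z$ be the sixth point. Since $X$ is connected, $z$ is adjacent to at least one $c_i$; I would show that in every configuration either $N(z)$ is complete (forcing reducibility to $X-\{z\}$, which is a connected $5$-point image, hence by Theorem~\ref{fivepoints} reducible to a point or to $C_5$ — and in the latter subcase one checks the extra edges still permit a further reduction), or some $c_i$ has $N(c_i)$ contained in the neighborhood of a neighbor. The key observation is that if $z$ is adjacent to two of the $c_i$, then to avoid creating a simple loop of length $\le 4$ among $z$ and the $c_i$'s, those two must be at cyclic distance $2$, say $c_0$ and $c_2$; but then $(c_0,c_1,c_2,z)$ is a simple $4$-loop and $\{c_0,c_1\}$ is a complete subgraph meeting every vertex's neighborhood — wait, one must be careful here, so instead I would let the computer enumerate all $6$-vertex graphs whose longest induced cycle has length $5$ and verify none survive the two reduction lemmas. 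Finally, the case where the longest simple loop has length exactly $4$: here $X$ is chordal after we account for that, or rather $X$ has a simple $4$-loop but no simple $5$- or $6$-loop; I would again defer to the enumeration, but the conceptual reason for reducibility is that a $4$-loop always contains an edge $\{c_0,c_1\}$ forming a complete subgraph, and if that subgraph is dominating then Lemma~\ref{simplexreduction} applies — the search confirms that whatever extra vertices and edges are present, some point is always removable.

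The main obstacle is not any single deep step but the verification that the computer enumeration is exhaustive and correctly implements the reduction criteria: one must confirm that Lemmas~\ref{simplexreduction} and~\ref{pointreduction} suffice to eliminate every reducible $6$-vertex image, i.e.\ that there is no irreducible-looking image on $6$ points other than $C_6$ which is nonetheless reducible by some subtler homotopy not captured by those two lemmas. In principle Lemma~\ref{nonsurj1step} tells us reducibility is witnessed by a one-step homotopy to a nonsurjection, and one can enumerate all such homotopies directly rather than relying only on the two lemmas; I would use that as the definitive check in the Sage code. Given that safeguard, the proof reduces to: (i) the chordal case via Lemma~\ref{chordal}; (ii) a finite check over the (few hundred) connected $6$-vertex graphs, eliminating all but $C_6$; and (iii) noting $C_6$ is irreducible, which follows from Theorem~\ref{34loopirr} since $C_6$ has no simple $3$- or $4$-loops and no degree-one vertex. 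I expect step~(ii)'s case analysis, if one insists on doing it by hand, to be the tedious part, which is precisely why the paper opts for the computer search.
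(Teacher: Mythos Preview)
Your approach is essentially the paper's: a computer enumeration of connected $6$-vertex graphs, pruned by Lemmas~\ref{simplexreduction} and~\ref{pointreduction}, with any survivors handled individually. The paper confirms precisely the obstacle you flagged as ``the main obstacle'': those two lemmas do \emph{not} suffice on their own. After applying them to all $112$ connected graphs on six vertices, exactly two survive---$C_6$ and one further image, namely $C_5$ together with a sixth vertex $x_4'$ adjacent to two nonadjacent pentagon vertices $x_0$ and $x_3$. The paper disposes of this stray image by hand, exhibiting the one-step homotopy from $\id_X$ to the nonsurjection that rotates the pentagon by one step and sends $x_4'\mapsto x_0$. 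Your proposed safeguard (enumerating one-step homotopies to nonsurjections via Lemma~\ref{nonsurj1step}) would detect this reduction automatically, so your plan goes through; the only thing you could not have known in advance is that there is exactly one such exceptional image and what it looks like. Your hand case-analysis for the length-$5$ and length-$4$ loop cases is, as you yourself note, incomplete and not needed once the machine check is in place.
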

\begin{proof}
We begin with all connected graphs on 6 vertices (up to isomorphism, there are 112 of them). The computer search reveals that all of these are reducible by Lemmas \ref{simplexreduction} and \ref{pointreduction} except for the following two images:
\begin{center}
\begin{tikzpicture}
	\node[vertex](a) at (0,1) {};
	\node[vertex](b) at (150:1) {};
	\node[vertex](c) at (210:1) {};
	\node[vertex](d) at (270:1) {};
	\node[vertex](e) at (330:1) {};
	\node[vertex](f) at (30:1) {};
	\draw (a) -- (b) -- (c) -- (d) -- (e) -- (f) -- (a);
	\node at (270:1.5){$C_6$};

\begin{scope}[shift={(4,0)}]

	\node[vertex] (y0)   at (0,1)  [label=above :$x_0$] {};
	\node[vertex] (y1)   at (162:1) [label=left :$x_1$] {}; 
	\node[vertex] (y2) at (234:1) [label=below :$x_2$] {};
	\node[vertex] (y3) at (306:1)  [label=below :$x_3$] {};
	\node[vertex] (y4) at (18:1) [label=right :$x_4$] {};
	\node[vertex] (y4p) at (0,0) [label=left :$x_4'$] {};
		
	\draw (y0) -- (y1) -- (y2) -- (y3) -- (y4) -- (y0);
	\draw (y3) -- (y4p) -- (y0);
	\node at (270:1.5){$X$};	
\end{scope}

\end{tikzpicture}
\end{center}
The image above labeled $X$ is reducible. Consider the homotopy given by $H(x,0) = x$ for all $x$, and $H(x_i,1) = x_{i+1}$ (subscripts read modulo $5$) and $H(x'_4,1) = x_0$. This gives a homotopy of $\id_X$ to a nonsurjective map, and so $X$ is reducible by Lemma \ref{nonsurj}.
\end{proof}

The same computer search will help to classify images of 7 points, but we have more special cases to check by hand.

\begin{thm}\label{sevenpoints}
Up to homotopy equivalence, there are 3 irreducible images of 7 points.
\end{thm}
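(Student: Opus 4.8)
The plan is to follow the template of the proof of Theorem~\ref{sixpoints}: use a computer search to cut the classification down to a short list of candidates, eliminate the reducible candidates by hand with explicit homotopies, and then verify that the three remaining images are irreducible and of pairwise distinct homotopy types. As in the earlier theorems of this section, ``image of $7$ points'' is understood to mean ``connected image of $7$ points.''

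First I would enumerate all connected graphs on $7$ vertices (there are $853$ of them up to isomorphism) and run the same Sage search as in the $n=6$ case, discarding every image that is reducible by Lemma~\ref{simplexreduction} or Lemma~\ref{pointreduction}. This leaves a manageable list of survivors, among which are the three images drawn for $n=7$ in Figure~\ref{catalogfig}: the $7$-gon $C_7$, the image $7_1$ of Example~\ref{screwexl}, and a third image $X_3$ whose vertices I will call $a,b,c,d,e,f,g$, consisting of the $5$-gon $a,b,c,d,e$ together with the triangle $c,d,g$ and the path $a,f,g$. For each surviving image other than these three, I would exhibit---exactly as for the image labeled $X$ in the proof of Theorem~\ref{sixpoints}---a self-map $f$ with $f(x)\adjeq x$ for all $x$ whose image omits a point (typically a ``rotation'' of some loop of the image, with the remaining vertices fixed), so that Lemma~\ref{nonsurj} (or Lemma~\ref{nonsurj1step}) shows the image reducible. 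This step is routine but laborious, since each survivor must be checked individually.

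It then remains to establish irreducibility of the three surviving images and to see that they are of distinct homotopy types. The image $C_7$ is irreducible by Corollary~\ref{Cmirr}, and $7_1$ has no simple $3$- or $4$-loop and no vertex of degree $1$, hence is irreducible by Theorem~\ref{34loopirr}. The three images have pairwise distinct degree sequences, namely $(2,2,2,2,2,2,2)$, $(3,3,2,2,2,2,2)$ and $(3,3,3,3,2,2,2)$, so they are pairwise non-isomorphic as graphs; since they are irreducible they cannot be homotopy equivalent to any image of fewer points, so Theorem~\ref{homeoequiv} shows that no two of them are homotopy equivalent, and that none duplicates an image from the $n\le 6$ portion of the catalog. (Alternatively, one can separate the three by computing their $L_m$ invariants as in Example~\ref{screwexl}.) Together with Theorems~\ref{fourpoints}, \ref{fivepoints} and~\ref{sixpoints}, this also completes Figure~\ref{catalogfig}.

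The main obstacle is the irreducibility of $X_3$. Since $X_3$ contains the triangle $c,d,g$, neither Theorem~\ref{34loopirr} nor the loop-rotation result Theorem~\ref{looprotation} applies, so instead I would prove directly that $X_3$ is rigid, which is stronger than irreducibility. By Lemma~\ref{nonsurj1step} it suffices to show that every continuous $f:X_3\to X_3$ with $f(x)\adjeq x$ for all $x$ must equal $\id_{X_3}$, and I would do this by a short case analysis on the value $f(a)\in\{a,b,e,f\}$. In the three cases $f(a)\in\{b,e,f\}$, the path-pulling Lemma~\ref{pathpulling} applied to simple paths out of $a$---together with direct use of the continuity relation $f(y)\adjeq f(x)$ for $y\adj x$---forces the values of $f$ all the way around $X_3$ and produces a contradiction; in the remaining case $f(a)=a$, the same kind of propagation forces $f=\id_{X_3}$. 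The bookkeeping is cut roughly in half by the graph automorphism $\sigma=(b\,e)(c\,d)$ of $X_3$, which conjugates the case $f(a)=b$ to the case $f(a)=e$. As a check, the rigidity of $X_3$ (and, more generally, which surviving images are reducible) can also be confirmed directly by computer, by testing all self-maps $g$ with $g(x)\adjeq x$.
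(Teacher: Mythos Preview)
Your proposal is correct and follows the same architecture as the paper: a computer search over the $853$ connected graphs on seven vertices using Lemmas~\ref{simplexreduction} and~\ref{pointreduction} leaves fifteen candidates; twelve of these are reduced by hand (the paper splits them between Lemma~\ref{nonsurj} and Theorem~\ref{pathreduction}, but as you note each can be handled by an explicit nonsurjective map homotopic to the identity in one step); irreducibility of $C_7$ and $7_1$ comes from Corollary~\ref{Cmirr} and Theorem~\ref{34loopirr}; and the three survivors are distinguished via Theorem~\ref{homeoequiv}, exactly as you do with degree sequences.

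The one substantive difference is the rigidity of $7_2$. Rather than your direct case analysis on $f(a)\in\{a,b,e,f\}$ with path-pulling, the paper formulates a general ``lasso'' criterion (Theorem~\ref{lassothm}) and verifies in Example~\ref{mantisexl} that $7_2$ satisfies its hypotheses. Your argument is sound and shorter for this single image; the paper's investment in Theorem~\ref{lassothm} pays off because the same criterion is reused to establish rigidity of the four $8$-point images $8_1,\dots,8_4$ in Theorem~\ref{eightpoints}. One minor quibble: the reduction to one-step homotopies when proving rigidity is not a consequence of Lemma~\ref{nonsurj1step} (which is about reducibility), but simply of the fact that any homotopy factors through one-step homotopies.
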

\begin{proof}
We begin with the same computer search used in the proof of Theorem \ref{sixpoints}, this time beginning with all connected graphs of 7 vertices, of which there are 853 up to isomorphism. Eliminating those which can be reduced by Lemmas \ref{simplexreduction} and \ref{pointreduction}, we obtain 15 graphs which we must distinguish by hand. Of these 15, all are reducible except for 3 of them. Details of the reductions for the 12 reducible graphs have been included as Theorem \ref{appendixthm} in the Appendix.

What remains are the following three images of 7 points which do not seem to be reducible. 
\begin{center}
\begin{minipage}{.20\textwidth}
\begin{tikzpicture}
	\node[vertex](a) at (0,1) {};
	\node[vertex](b) at (141:1) {};
	\node[vertex](c) at (193:1) {};
	\node[vertex](d) at (244:1) {};
	\node[vertex](e) at (296:1) {};
	\node[vertex](f) at (347:1) {};
	\node[vertex](g) at (39:1) {};
	\draw (a) -- (b) -- (c) -- (d) -- (e) -- (f) -- (g) -- (a);
	\node at (270:1.5){$C_7$};
\end{tikzpicture}
\end{minipage}
\begin{minipage}{.20\textwidth}
\begin{tikzpicture}
	\node[vertex](a) at (0,1) {};
	\node[vertex](b) at (150:1) {};
	\node[vertex](c) at (210:1) {};
	\node[vertex](d) at (270:1) {};
	\node[vertex](e) at (330:1) {};
	\node[vertex](f) at (30:1) {};
	\node[vertex](g) at (0,0) {};
	\draw (a) -- (b) -- (c) -- (d) -- (e) -- (f) -- (a);
	\draw (a) -- (g) -- (d);
	\node at (270:1.5){$7_1$};
\end{tikzpicture}
\end{minipage}%
\begin{minipage}{.20\textwidth}
\begin{tikzpicture}
	\node[vertex] (a)   at (0,-1) {};
	\node[vertex] (b)   at (-162:1) {}; 
	\node[vertex] (c) at (-234:1) {};
	\node[vertex] (d) at (-306:1)   {};
	\node[vertex] (e) at (-18:1)  {};
	\node[vertex] (f) at (0,-.33) {};
	\node[vertex] (g) at (0,.33) {};
		
	\draw (a) -- (b) -- (c) -- (d) -- (e) -- (a);
	\draw (d) -- (g) -- (c);
	\draw (g) -- (f) -- (a);
	\node at (0,-1.5){$7_2$};
\end{tikzpicture}
\end{minipage}%
\\ 
\end{center}
In fact each of these images are indeed irreducible: $C_7$ and $7_1$ by Lemma \ref{34loopirr} and $7_2$ will be shown to be rigid in Example \ref{mantisexl}. They are not isomorphic, and thus are not homotopy equivalent to one another by Lemma \ref{homeoequiv}.
\end{proof}

We conclude the paper by presenting some irreducible images of 8 points, though we do not claim this is the complete list. Our partial list consists of $C_8$, together with four rigid images. 
We begin with a criterion which can be used to show that an image is rigid.

\begin{definition}
A \emph{lasso} in $X$ is a simple loop $p:C_m\to X$ and a simple path $r:[0,k]_\Z \to X$ with $r(k) = p(c_0)$ such that $k>0$ and $m\ge 5$ and neither of $p(c_1)$ or $p(c_{m-1})$ are adjacent to $r(k-1)$. 

We will say such a lasso \emph{has no right angles} when each of $p$ and $r$ have no right angles, and when no right angle is formed where $r$ meets $p$. That is, the final edge of $r$, together with either of the edges in $p$ meeting $p(c_0)$, do not form two edges of any 4-loop in $X$.
\end{definition} 

Note that above we require $p$ and $r$ to be simple in themselves and at the the point $r(k)$, but points of $r$ and $p$ may be adjacent or even equal to one another away from $r(k)$.

\begin{thm}\label{lassothm}
Let $X$ be an image in which, for any two adjacent points $x \adj x'\in X$, there is a lasso with no right angles having path $r:[0,k]_\Z \to X$ with $r(1)=x$ and $r(0)=x'$. Then $X$ is rigid.
\end{thm}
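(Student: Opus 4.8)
The plan is to show that any map $f:X\to X$ homotopic to the identity must be the identity, by reducing to the case where $f$ is homotopic to $\id_X$ in one step and then using the lasso structure together with the path-pulling lemmas. First I would observe that it suffices to treat the one-step case: if $f\simeq\id_X$ via a homotopy of length $k$, then each intermediate stage $H(\cdot,t)$ is homotopic to the identity in one step, so if every one-step-homotopic selfmap is the identity, then by induction on $t$ the whole homotopy is constant and $f=\id_X$. So assume $f:X\to X$ is homotopic to $\id_X$ in one step, giving $f(x)\adjeq x$ for all $x$.

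Next I would fix an arbitrary point $v\in X$ and aim to show $f(v)=v$. Since $f(v)\adjeq v$, either $f(v)=v$ (done) or $f(v)=x'$ for some $x'\adj v$; set $x=v$, so we have adjacent points $x\adj x'$ with $f(x)=x'$. By hypothesis there is a lasso with no right angles whose path $r:[0,k]_\Z\to X$ satisfies $r(0)=x'$ and $r(1)=x$, terminating at $r(k)=p(c_0)$ where $p:C_m\to X$ is a simple loop with $m\ge 5$. The key step is to walk the path-pulling lemma (Lemma \ref{pathpulling}) along $r$: since $f$ is homotopic to the identity in one step, $r$ has no right angles, and $f(r(1)) = f(x) = x' = r(0)$, Lemma \ref{pathpulling} gives $f(r(k)) = r(k-1)$. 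In other words, $f$ moves the attaching point $p(c_0)$ of the loop to $r(k-1)$, which by the lasso condition is \emph{not} adjacent to either $p(c_1)$ or $p(c_{m-1})$, and (by the no-right-angle condition at the junction) is not the unique common neighbor of any relevant pair either.

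Now I would derive a contradiction from the behavior of $f$ on the loop $p$. Consider $f\circ p:C_m\to X$; since $f\simeq\id_X$ we have $f\circ p\simeq p$, and $p$ has no right angles. The hard part — and the place I expect the real work to be — is extracting a contradiction from the fact that $f(p(c_0))=r(k-1)$ is ``off'' the loop in a way incompatible with $f\circ p$ being homotopic to $p$: one wants to argue that a loop with no right angles, under a one-step homotopy that displaces its basepoint to a non-adjacent point, cannot stay within the homotopy class of $p$. I would handle this by applying the rotation-type analysis underlying Lemma \ref{motherlemma}/Theorem \ref{looprotation} locally near $c_0$: continuity of $f\circ p$ forces $f(p(c_1))$ and $f(p(c_{m-1}))$ to be adjacent-or-equal to $f(p(c_0))=r(k-1)$, while the path-pulling conclusion along $r$ and the no-right-angle hypotheses pin down $f(p(c_1))$ and $f(p(c_{m-1}))$ to values adjacent to $p(c_0)$; combining these with the lasso condition that $r(k-1)\not\adj p(c_1), p(c_{m-1})$ produces either a forbidden short loop or a direct contradiction with $m\ge 5$. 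Since $v$ was arbitrary, $f=\id_X$, so $X$ is rigid. The main obstacle is making the loop argument in the last paragraph airtight — in particular ruling out the degenerate possibilities where $f$ collapses part of $p$ — and I would expect to lean heavily on the hypothesis $m\ge 5$ and on the no-right-angle condition at the junction of $r$ and $p$ to close those cases.
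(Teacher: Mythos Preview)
Your setup is exactly the paper's: reduce to a one-step homotopy, assume $f(x)\neq x$ for some $x$, invoke the lasso with $r(1)=x$ and $r(0)=f(x)$, and apply Lemma~\ref{pathpulling} along $r$ to conclude $f(p(c_0))=r(k-1)$. Up to this point there is nothing to add.

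The gap is the loop step, which you correctly identify as the crux but do not actually close. Invoking the ``rotation-type analysis'' of Theorem~\ref{looprotation} is a dead end here: that theorem requires that no edge of the loop lie in a $3$-loop, a hypothesis the lasso does not provide. Your alternative sketch --- combining $f(p(c_1))\adjeq r(k-1)$ with ``$f(p(c_1))$ adjacent to $p(c_0)$'' to force a short loop --- does not go through as stated, since a one-step homotopy only gives $f(p(c_1))\adjeq p(c_1)$, not adjacency to $p(c_0)$.

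The paper's resolution is cleaner and uses nothing beyond Lemma~\ref{pathpulling} again. Consider the two-step path $q=(r(k-1),\,p(c_0),\,p(c_1))$. The lasso's non-adjacency condition $r(k-1)\not\adj p(c_1)$ makes $q$ simple, and the no-right-angle condition at the junction makes $q$ right-angle-free; since $f(p(c_0))=r(k-1)$, path-pulling gives $f(p(c_1))=p(c_0)$. Symmetrically $f(p(c_{m-1}))=p(c_0)$. Now pick any $j$ with $1<j<m-1$ --- this is exactly where $m\ge 5$ is used --- and apply Lemma~\ref{pathpulling} to the arc $(p(c_0),p(c_1),\dots,p(c_j))$ to obtain $f(p(c_j))=p(c_{j-1})$, and to the opposite arc $(p(c_0),p(c_{m-1}),\dots,p(c_j))$ to obtain $f(p(c_j))=p(c_{j+1})$. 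These contradict each other, and the proof is complete. So the missing idea is not a loop-rotation theorem or a short-loop count, but simply two more applications of the same path-pulling lemma, first through the junction and then around both arcs.
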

\begin{proof}
It suffices to show that the only map homotopic to $\id_X$ in one step is $\id_X$.
Let $f:X\to X$ be homotopic to the identity in one step, and choose some $x\in X$. For the sake of a contradiction assume that $f(x)\neq x$.

Since $x \adj f(x)$, there is a lasso given by $p:C_m\to X$ and $r:[0,k]_Z \to X$ with $r(1) = x$ and $r(0) = f(x)$. By Lemma \ref{pathpulling} applied to $r$ we have $f(r(k)) = r(k-1)$, and so $f(p(c_0)) = r(k-1)$. Let $q:[0,2]_\Z \to X$ be the path $(r(k-1), r(k)=p(c_0), p(c_1))$. Since our lasso has no right angles, $q$ has no right angles and thus Lemma \ref{pathpulling} again gives $f(p(c_1)) = p(c_0)$. Similarly applying Lemma \ref{pathpulling} to the two-step path from $r(k-1)$ to $p(c_{m-1})$ gives $f(p(c_{m-1})) = p(c_0)$. 

Now choose some $y= p(c_j)$ with $1 < j < m-1$ (here we use the fact that $m\ge 5$), and consider the arc of $p$ from $p(c_0)$ to $y$ through $p(c_1)$. Again we apply Lemma \ref{pathpulling} and obtain $f(y) = p(c_{j-1})$. But we can consider the other arc of $p$ from $p(c_0)$ to $y$ through $p(c_{m-1})$ and this time Lemma \ref{pathpulling} gives $f(y) = p(c_{j+1})$, which is a contradiction. 
\end{proof}

\begin{exl}\label{mantisexl}
We will show that $7_2$, the image of 7 points from Theorem \ref{sevenpoints}, is rigid. It will be convenient for this argument to draw and label $7_2$ as follows:

\begin{center}
\begin{tikzpicture}[scale=0.75, every node/.style={transform shape}]
\GraphInit[vstyle=Normal]
%
%
\Vertex[L=\hbox{$0$},x=1.7cm,y=1cm]{v0}
\Vertex[L=\hbox{$1$},x=-1.7cm,y=1cm]{v1}
\Vertex[L=\hbox{$2$},x=0cm,y=-2cm]{v2}
\Vertex[L=\hbox{$3$},x=.85cm,y=.5cm]{v3}
\Vertex[L=\hbox{$4$},x=-.85cm,y=.5cm]{v4}
\Vertex[L=\hbox{$5$},x=0cm,y=-1cm]{v5}
\Vertex[L=\hbox{$6$},x=0.0cm,y=0cm]{v6}
\Edge[](v0)(v1)
\Edge[](v0)(v2)
\Edge[](v1)(v4)
\Edge[](v0)(v3)
\Edge[](v3)(v6)
\Edge[](v4)(v6)
\Edge[](v5)(v6)
\Edge[](v1)(v2)
\Edge[](v5)(v2)
\end{tikzpicture}
\end{center}

We need only verify that the condition of Theorem \ref{lassothm} holds. By symmetry, we need only consider cases where the point $x$ is one of 1, 4, or 6.

For $x=1$ by symmetry we consider only $x'=4$ and $x'=0$. For $x'=4$ we have the lasso given by the path $(1,4,6)$ and loop $(6,3,0,2,5,6)$. For $x'=0$ we have the lasso given by path $(0,1,4,6)$ and the same loop.

For $x=4$ we need to consider $x'=1$ and $x'=6$. For $x'=6$ we use path $(4,6)$ and loop $(6,3,0,2,5,6)$. For $x'=1$ we use path $(4,1,2)$ and the loop $(2,5,6,3,0,2)$.

For $x=6$ we need to consider only $x'=4$. In this case we use path $(6,4,1,2)$ and loop $(2,5,6,3,0,2)$.

Having checked all pairs $x,x' \in X$, we conclude that $7_2$ is rigid.
\end{exl}

As shown above, for many images it is routine to verify rigidity using Theorem \ref{lassothm}. For example the same arguments will work if any additional points were inserted into $7_2$ along the radial paths from 0, 1, and 2 to 6. Also, many similar images constructed with a 4-loop or any $m$-loop around the outside (instead of the triangle in $7_2$) will be rigid. Similar arguments will show that any wedge sum of loops or union of loops joined along some edges will be rigid, provided that the loops are big enough.

In particular we can construct several examples of rigid images on 8 points.
\begin{thm}\label{eightpoints}
Up to homotopy equivalence, there are at least 5 irreducible images of 8 points.
\end{thm}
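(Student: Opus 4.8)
The plan is to exhibit five pairwise non-isomorphic irreducible images on 8 points; by Theorem \ref{homeoequiv}, non-isomorphic irreducible images are automatically of distinct homotopy types, so producing five such images suffices. The natural first candidate is $C_8$, which is irreducible by Corollary \ref{Cmirr} (or by Theorem \ref{34loopirr}, since $C_8$ has no simple $3$- or $4$-loops and no vertex of degree one). For the remaining four I would look for rigid images, since rigidity implies irreducibility, and rigidity can be verified mechanically via the lasso criterion of Theorem \ref{lassothm}. Good candidates come from the remark following Example \ref{mantisexl}: take the image $7_2$ and insert an eighth point along one of the three radial paths (from $0$, $1$, or $2$ to $6$); take a version of $7_2$ with the central triangle replaced by a $4$-loop or a longer loop so that the total count is $8$; take a wedge of two loops joined along an edge (for instance two pentagons sharing a single edge, which has $8$ vertices); and take a theta-like union of loops along an edge. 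One must of course check that the chosen four graphs are pairwise non-isomorphic and non-isomorphic to $C_8$ — this is a finite check, easily done by comparing degree sequences and cycle structure.

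The key steps, in order, are: (1) list the five specific images explicitly as graphs, giving vertices, edges, and a picture (in the style of the figures earlier in the paper); (2) verify $C_8$ is irreducible by citing Corollary \ref{Cmirr}; (3) for each of the other four, verify the hypothesis of Theorem \ref{lassothm}, namely that for every adjacent pair $x\adj x'$ there is a lasso with no right angles whose path $r$ has $r(1)=x$ and $r(0)=x'$ — exploiting symmetry to cut down the number of pairs to check, exactly as in Example \ref{mantisexl}; (4) conclude each is rigid, hence irreducible; (5) check the five images are pairwise non-isomorphic, so by Theorem \ref{homeoequiv} they are of distinct homotopy types; (6) conclude there are at least five irreducible images on $8$ points.

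The main obstacle is step (3): for each candidate image one must genuinely produce, for every adjacency class of edges, an explicit lasso with no right angles. The ``no right angles'' condition is the delicate part, since it requires that no two consecutive edges of the path $r$, no two consecutive edges of the loop $p$, and no junction edge-pair where $r$ meets $p$, lie in a common $4$-loop of the ambient image. This forces the loops used in the lassos to be long enough (the hypothesis $m\ge5$) and the images to have few short cycles; it is precisely why the remark after Example \ref{mantisexl} insists the loops be ``big enough.'' So the real content is choosing the four non-$C_8$ images carefully enough that such lassos exist, and then carrying out the (finite, but somewhat tedious) case analysis. As in Example \ref{mantisexl}, symmetry of the chosen images will be essential to keep this case analysis short; I would pick images with as much symmetry as possible for that reason. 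The remaining steps are routine: irreducibility follows formally from rigidity, distinctness of homotopy types follows formally from Theorem \ref{homeoequiv}, and checking non-isomorphism is an elementary graph-theoretic comparison.
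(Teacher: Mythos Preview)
Your plan is correct and mirrors the paper's own proof almost exactly: the paper exhibits $C_8$ together with four explicit images $8_1,8_2,8_3,8_4$ (which are precisely of the kinds you describe --- e.g.\ $8_2$ is two pentagons sharing an edge, and $8_3$ is $7_2$ with a radial edge subdivided), cites Corollary~\ref{Cmirr} for $C_8$, asserts rigidity of the other four via Theorem~\ref{lassothm} (omitting the case analysis, just as you anticipate it to be routine but tedious), and then invokes Theorem~\ref{homeoequiv} after checking non-isomorphism. There is nothing to correct; you have identified the argument.
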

\begin{proof}
We will show that the following 5 images are irreducible and not homotopy equivalent:
\begin{center}
\begin{minipage}{.19\textwidth}
\begin{tikzpicture}
	\node[vertex](a) at (0,1) {};
	\node[vertex](b) at (135:1) {};
	\node[vertex](c) at (180:1) {};
	\node[vertex](d) at (225:1) {};
	\node[vertex](e) at (270:1) {};
	\node[vertex](f) at (315:1) {};
	\node[vertex](g) at (1,0) {};
	\node[vertex](h) at (45:1) {};
	\draw (a) -- (b) -- (c) -- (d) -- (e) -- (f) -- (g) -- (h) -- (a);
	\node at (270:1.5){$C_8$};
\end{tikzpicture}
\end{minipage}
\begin{minipage}{.19\textwidth}
\begin{tikzpicture}
	\node[vertex](a) at (0,1) {};
	\node[vertex](b) at (150:1) {};
	\node[vertex](b1) at (-.86602,0) {};  
	\node[vertex](c) at (210:1) {};
	\node[vertex](d) at (270:1) {};
	\node[vertex](e) at (330:1) {};
	\node[vertex](f) at (30:1) {};
	\node[vertex](g) at (0,0) {};
	\draw (a) -- (b) -- (b1) -- (c) -- (d) -- (e) -- (f) -- (a);
	\draw (a) -- (g) -- (d);
	\node at (270:1.5){$8_1$};
\end{tikzpicture}
\end{minipage}%
\begin{minipage}{.19\textwidth}
\begin{tikzpicture}
	\node[vertex](a) at (0,1) {};
	\node[vertex](b) at (150:1) {};
	\node[vertex](b1) at (-.86602,0) {};  
	\node[vertex](c) at (210:1) {};
	\node[vertex](d) at (270:1) {};
	\node[vertex](e) at (330:1) {};
	\node[vertex](h) at (.86602,0) {};
	\node[vertex](f) at (30:1) {};
	\draw (a) -- (b) -- (b1) -- (c) -- (d) -- (e) -- (h) -- (f) -- (a);
	\draw (a) -- (d);
	\node at (270:1.5){$8_2$};
\end{tikzpicture}
\end{minipage}%
\begin{minipage}{.19\textwidth}
\begin{tikzpicture}
	\node[vertex] (a)   at (0,-1) {};
	\node[vertex] (b)   at (-162:1) {}; 
	\node[vertex] (c) at (-234:1) {};
	\node[vertex] (d) at (-306:1)   {};
	\node[vertex] (e) at (-18:1)  {};
	\node[vertex] (f) at (0,-.54) {};
	\node[vertex] (f1) at (0, -.1) {};
	\node[vertex] (g) at (0,.33) {};
		
	\draw (a) -- (b) -- (c) -- (d) -- (e) -- (a);
	\draw (d) -- (g) -- (c);
	\draw (g) -- (f) -- (f1) -- (a);
	\node at (0,-1.5){$8_3$};	
\end{tikzpicture}
\end{minipage}%
\begin{minipage}{.19\textwidth}
\begin{tikzpicture}
	\node[vertex] (a)   at (0,-1) {};
	\node[vertex] (b)   at (-162:1) {}; 
	\node[vertex] (b1) at (-126:1) {};
	\node[vertex] (c) at (-234:1) {};
	\node[vertex] (d) at (-306:1)   {};
	\node[vertex] (e) at (-18:1)  {};
	\node[vertex] (e1) at (-50:1) {};
	\node[vertex] (g) at (0,.33) {};
		
	\draw (a) -- (b1) -- (b) -- (c) -- (d) -- (e) -- (e1) -- (a);
	\draw (d) -- (g) -- (c);
	\draw (g) -- (a);
	\node at (0,-1.5){$8_4$};
\end{tikzpicture}
\end{minipage}%
\end{center}

$C_8$ is irreducible by Corollary \ref{Cmirr}. Each of the other images above is rigid (and thus irreducible), which can be demonstrated using Theorem \ref{lassothm} as in Example \ref{mantisexl}. We omit the details.

It remains only to show that these images are not homotopy equivalent. Since they are irreducible, by Theorem \ref{homeoequiv} it suffices to show that they are not isomorphic, and this is easily verified. For example $8_1$ has exactly two points of degree 3 which are adjacent to a common point. None of the other graphs have this configuration, so $8_1$ is not isomorphic to the others. Again we omit the details for the others.
\end{proof}

We will not compute exactly the number of irreducible images of 8 points up to homotopy equivalence, but we believe this number to be  greater than 5. A computer search similar to that used in Theorems \ref{sixpoints} and \ref{sevenpoints} shows that the number of irreducible images of 8 points up to homotopy equivalence is less than or equal to 106. The same search for images of 9 points shows the number to be less than or equal to 2132.

\section{Pointed homotopy equivalence and a question of Boxer}\label{question}
The image $X$ in the proof of Theorem \ref{sixpoints} provides an answer to a question of Boxer in \cite{boxer05}. For a pointed image $(X,x_0)$, and pointed maps $f,g:X \to X$ (with $f(x_0) = x_0$ and $g(x_0) = x_0$), a \emph{pointed homotopy} from $f$ to $g$ is a homotopy $H$ from $f$ to $g$ satisfying $H(x_0, t) = x_0$ for all $t$. Pointed images $(X,x_0)$ and $(Y,y_0)$ are \emph{pointed homotopy equivalent} if there is a homotopy equivalence $f:X\to Y$ and $g:Y \to X$ with $f\circ g \simeq \id_Y$ and $g\circ f \simeq \id_X$ by pointed homotopies. 

Boxer asks: if $X$ and $Y$ are homotopy equivalent, must they be pointed homotopy equivalent for any choice of base points $x_0$ and $y_0$? The question arises naturally when studying the digital fundamental group, where the natural category of discourse is pointed images with pointed homotopies. 

The answer to Boxer's question is ``no''. Consider the image called $X$ in the proof of Theorem \ref{sixpoints}:
\begin{center}
\begin{tikzpicture}

	\node[vertex] (y0)   at (0,1)  [label=above :$x_0$] {};
	\node[vertex] (y1)   at (162:1) [label=left :$x_1$] {}; 
	\node[vertex] (y2) at (234:1) [label=below :$x_2$] {};
	\node[vertex] (y3) at (306:1)  [label=below :$x_3$] {};
	\node[vertex] (y4) at (18:1) [label=right :$x_4$] {};
	\node[vertex] (y4p) at (0,0) [label=left :$x_4'$] {};
		
	\draw (y0) -- (y1) -- (y2) -- (y3) -- (y4) -- (y0);
	\draw (y3) -- (y4p) -- (y0);
	\node at (270:1.5){$X$};	
\end{tikzpicture}
\end{center}
In the proof of Theorem \ref{sixpoints} we show that $X$ is homotopy equivalent to $C_5$. We will show that this equivalence cannot be realized by pointed homotopies. 

We will view $X$ as the pointed image $(X,x_0)$. It will suffice to show that no function is pointed homotopic to $\id_X$ in one step other than $\id_X$ itself. Let $f$ be a map with $f\simeq \id_X$ by a pointed homotopy in one step, and we will show that $f(x)=x$ for all $x\in X$ by path-pulling arguments using Lemma \ref{pathpulling}. 

The right angles in $X$ at $x_4$ and $x'_4$ mean that we cannot apply Lemma \ref{pathpulling} indiscriminately, but a slight variation of the lemma will hold even for certain paths through $x_4$ and $x'_4$. For example if we consider the path $(x_2,x_3,x_4,x_0)$ and assume that $f(x_3) = x_2$, we can conclude by the proof of Lemma \ref{pathpulling} that $f(x_0) \in \{x_4,x'_4\}$.

Now we show that $f$ must be the identity. Since $f\simeq \id_X$ by a pointed homotopy with base point $x_0$ we will have $f(x_0) = x_0$. We will show that $f$ is the identity on each of the other points individually, starting with $x_3$. 

Now consider $x_3$: Since $f\simeq \id_X$ in one step, $f(x_3) \in \{x_2,x_3,x_4, x'_4\}$. If $f(x_3) = x_4$ then Lemma \ref{pathpulling} applied to the path $(x_4,x_3,x_2,x_1,x_0)$ gives $f(x_0) = x_1$ which is a contradiction since $f(x_0)=x_0$. Similarly $f(x_3) \neq x'_4$. If $f(x_3) = x_2$ then our variation of Lemma \ref{pathpulling} applied to $(x_2, x_3, x_4, x_0)$ gives $f(x_0) \in \{x_4, x'_4\}$ which is again a contradiction. Thus we conclude that $f(x_3) = x_3$. 

Now consider $x_4$: Since $f$ is homotopic to the identity in one step, $f(x_4) \in \{x_0,x_4,x_3\}$. As above, if $f(x_4) = x_3$ then $f(x_0) \in \{x_4, x'_4\}$ which a contradiction. Similarly if $f(x_4) = x_0$ then $f(x_3) \in \{x_4, x'_4\}$ which contradicts the fact above that $f(x_3) = x_3$. Thus we have $f(x_4) = x_4$. For exactly the same reasons we have $f(x'_4)=x'_4$. 

It remains to show that $f(x_1) = x_1$ and $f(x_2) = x_2$. For $x_1$ we have $f(x_1) \in \{x_0, x_1, x_2\}$. 
If $f(x_1) = x_2$ then Lemma \ref{pathpulling} applied to $(x_2, x_1, x_0)$ gives $f(x_0) = x_1$ which is a contradiction. If $f(x_1) = x_0$ then Lemma \ref{pathpulling} applied to the path $(x_0, x_1, x_2, x_3)$ gives $f(x_3) = x_2$. But we have already shown that $f(x_3) = x_3$ and so we conclude that $f(x_1) = x_1$. Similar arguments show that $f(x_2) =x_2$, and thus $f = \id_X$. 

We have shown there are no maps pointed homotopic to $\id_X$ other than $\id_X$ (one could say that $X$ is ``pointed rigid''), and so the pointed image $(X,x_0)$ is not pointed homotopy equivalent to $(C_5,c)$ for any $c\in C_5$, even though $X$ is homotopy equivalent to $C_5$ when we do not require pointed homotopies. Similar arguments will show that $(X,x)$ is not pointed homotopy equivalent to $C_5$ regardless of which point $x\in X$ is chosen as the base point.

\section{Appendix: Details for the proof of Theorem \ref{sevenpoints}}
As described in the proof of Theorem \ref{sevenpoints}, a computer search of all connected graphs on 7 vertices reveals 15 images which cannot be reduced by Lemmas \ref{simplexreduction} or \ref{pointreduction}. These 15 images, $X_1, \dots, X_{15}$ are displayed in Figure \ref{sevenspacesfig}. In this section we will show that all of these images are reducible except for the first three, which are respectively $C_7$, $7_1$, and $7_2$ from the proof of Theorem \ref{sevenpoints}.

\def\sevenscale{.65}

\begin{figure}
\input{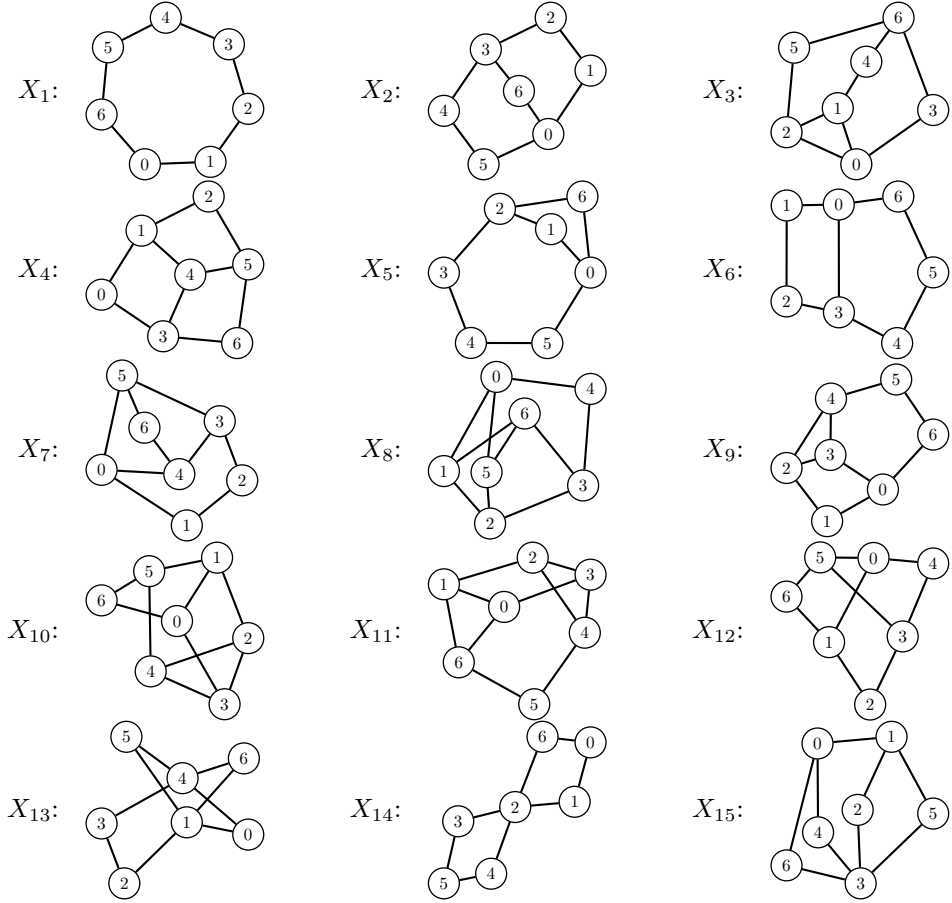}
\begin{tabular}{rlrlrl}
$X_{1}$: &
\begin{minipage}{.25\textwidth}
\sevenspacea
\end{minipage}
&
$X_{2}$: &
\begin{minipage}{.25\textwidth}
\sevenspaceb
\end{minipage}
&
$X_{3}$: &
\begin{minipage}{.25\textwidth}
\sevenspacec
\end{minipage}
\\
$X_{4}$: &
\begin{minipage}{.25\textwidth}
\sevenspaced
\end{minipage}
&
$X_{5}$: &
\begin{minipage}{.25\textwidth}
\sevenspacee
\end{minipage}
&
$X_{6}$: &
\begin{minipage}{.25\textwidth}
\sevenspacef
\end{minipage}
\\
$X_{7}$: &
\begin{minipage}{.25\textwidth}
\sevenspaceg
\end{minipage}
&
$X_{8}$: &
\begin{minipage}{.25\textwidth}
\sevenspaceh
\end{minipage}
&
$X_{9}$: &
\begin{minipage}{.25\textwidth}
\sevenspacei
\end{minipage}
\\
$X_{10}$: &
\begin{minipage}{.25\textwidth}
\sevenspacej
\end{minipage}
&
$X_{11}$: &
\begin{minipage}{.25\textwidth}
\sevenspacek
\end{minipage}
&
$X_{12}$: &
\begin{minipage}{.25\textwidth}
\sevenspacel
\end{minipage}
\\
$X_{13}$: &
\begin{minipage}{.25\textwidth}
\sevenspacem
\end{minipage}
&
$X_{14}$: &
\begin{minipage}{.25\textwidth}
\sevenspacen
\end{minipage}
&
$X_{15}$: &
\begin{minipage}{.25\textwidth}
\sevenspaceo
\end{minipage}
\end{tabular}
\caption{All images of 7 vertices which have no reduction using Lemmas \ref{simplexreduction} or \ref{pointreduction}\label{sevenspacesfig}}
\end{figure}

\begin{thm}\label{appendixthm}
For the images in Figure \ref{sevenspacesfig}, $X_i$ is reducible for all $i>3$.
\end{thm}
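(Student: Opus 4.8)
The plan is to exhibit, for each of the twelve images $X_4,\dots,X_{15}$, a single explicit homotopy in one step from $\id_{X_i}$ to a nonsurjective selfmap, whence reducibility follows immediately from Lemma \ref{nonsurj} (or from Lemma \ref{nonsurj1step}). In each case the homotopy $H(\cdot,0)=\id$ and $H(\cdot,1)=f$ is described simply by specifying where $f$ sends each vertex; the only things to verify are that $f$ is continuous and that $f(v)\adjeq v$ for every vertex $v$ (the latter being exactly the time-continuity condition for a one-step homotopy). Concretely, for each graph one hunts for a vertex $x$ and a neighbor $y$ with $N(x)\subseteq N(y)$, or more generally a ``foldable'' configuration: a vertex or short path that can be pushed onto an adjacent vertex or path without breaking any adjacency. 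In several of the listed graphs one can read off directly that Theorem \ref{pathreduction} applies — there is an injective path $p$ that can be collapsed onto a parallel injective path $q$ — and in the remaining cases a small ad hoc collapse works, much as in the treatment of the image called $X$ in the proof of Theorem \ref{sixpoints}.

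The key steps, carried out image by image, are: (1) identify in $X_i$ a candidate reduction — typically a degree-$2$ or degree-$3$ vertex sitting inside the neighborhood of an adjacent vertex, or a $4$- or $5$-loop with a ``chord-like'' vertex that lets part of it fold inward; (2) write down the corresponding selfmap $f$ explicitly on all seven vertices; (3) check $f(v)\adjeq v$ for each $v$ and check continuity edge-by-edge (this is a finite verification on at most $\binom{7}{2}$ pairs, and in practice only the handful of edges incident to moved vertices need attention); (4) observe $f$ is not surjective and invoke Lemma \ref{nonsurj}. Since the graphs were produced by the computer search precisely as those surviving Lemmas \ref{simplexreduction} and \ref{pointreduction}, none of them reduces by the most naive criterion $N(x)\subseteq N(y)$ alone, so each reduction will use either the path-collapsing Theorem \ref{pathreduction} or a genuinely two-dimensional fold; this is what makes the twelve verifications non-automatic but still routine.

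The main obstacle is not any single hard idea but the bookkeeping: one must be careful that the collapsing map is globally continuous, not just locally sensible, because a fold that looks harmless near one vertex can create a forbidden ``jump'' across an edge on the far side of the image (the failure mode is exactly a pair $v\adj w$ with $f(v)\not\adjeq f(w)$). So the real work is, for each $X_i$, choosing the fold so that the moved points and their images stay within a common ``thickened edge,'' and then confirming no other edge is violated. I expect the trickiest cases to be the images with several degree-$3$ vertices and an embedded $5$-loop (e.g.\ $X_{13}$, $X_{14}$, $X_{15}$ in Figure \ref{sevenspacesfig}), where the obvious fold of the $5$-loop interacts with the extra edges; there one may need to compose two one-step homotopies, first straightening an offending chord via Theorem \ref{pathreduction} and then collapsing, appealing to Lemma \ref{nonsurj} rather than Lemma \ref{nonsurj1step} for the final conclusion.
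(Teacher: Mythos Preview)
Your proposal is correct and matches the paper's approach exactly: a case-by-case analysis using either Theorem~\ref{pathreduction} (for $X_4$, $X_6$, $X_9$, $X_{10}$, $X_{11}$, $X_{14}$) or an explicit nonsurjective map homotopic to $\id$ in one step via Lemma~\ref{nonsurj} (for $X_5$, $X_7$, $X_8$, $X_{12}$, $X_{13}$, $X_{15}$). Your worry about needing to compose two one-step homotopies in the ``trickiest'' cases turns out to be unnecessary --- every one of the twelve images admits a single-step reduction, so Lemma~\ref{nonsurj1step} already suffices throughout.
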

\begin{proof}
We require different arguments for each image, but each uses Lemma \ref{pathreduction} or Lemma \ref{nonsurj}.
\begin{itemize}
\item[$i=4:$]
$X_4$ is reducible by Lemma \ref{pathreduction} with paths $p = (0,1,2)$ and $q = (3,4,5)$.
\item[$i=5:$]
$X_5$ is reducible by Lemma \ref{nonsurj}, since the following map is nonsurjective and homotopic to the identity:
\[
f(x)= \begin{cases}
    x+1 \pmod 6 & \text{if } x\in \{0,1,2,3,4,5\}, \\
    2 & \text{if } x=6.
\end{cases}
\]
\item[$i=6:$]
$X_6$ is reducible by Lemma \ref{pathreduction} with paths $p = (1,2)$ and $q = (0,3)$.
\item[$i=7:$]
$X_7$ is reducible by Lemma \ref{nonsurj}, since the following map is nonsurjective and homotopic to the identity:
\[
f(x)= \begin{cases}
    x+1 \pmod 5 & \text{if } x\in \{0,1,2,3,4\}, \\  
    0 & \text{if } x=5
    \\  
    4 & \text{if } x=6.
\end{cases}
\]
\item[$i=8:$]
$X_8$ is reducible by Lemma \ref{nonsurj}, since the following map is nonsurjective and homotopic to the identity:
\[ 
f(x)= \begin{cases}
    x+1 \pmod 5 & \text{if } x \in \{ 0,1,2,3,4\}, \\
    2 & \text{if } x=5,\\
    3 & \text{if } x=6.
\end{cases}
\]
\item[$i=9:$]
$X_9$ is reducible by Lemma \ref{pathreduction} with paths $p = (1,2)$ and $q=(0,3)$.
\item[$i=10:$]
$X_{10}$ is reducible by Lemma \ref{pathreduction} with paths $p = (3,0,6)$ and $q = (2,1,5)$.
\item[$i=11:$]
$X_{11}$ is reducible by Lemma \ref{pathreduction} with paths $p = (0,3)$ and $q=(1,2)$.
\item[$i=12:$]
$X_{12}$ is reducible by Lemma \ref{nonsurj}, since the following map is nonsurjective and homotopic to the identity:
\[
f(x)= \begin{cases}
    x+1 \pmod 5 & \text{if } x \in \{0,1,2,3,4\}, \\
    0 & \text{if } x=5, \\
    1 & \text{if } x=2.
\end{cases}
\]
\item[$i=13:$]
$X_{13}$ is reducible by Lemma \ref{nonsurj}, since the following map is nonsurjective and homotopic to the identity:
\[
f(x)= \begin{cases}
    x+1 \pmod 5 & \text{if } x\in \{0,1,2,3,4\}, \\
    1 & \text{if } x \in \{5,6\}.
\end{cases}
\]
\item[$i=14:$]
$X_{14}$ is reducible by Lemma \ref{pathreduction} with paths $p = (0,1)$ and $q = (6,2)$.
\item[$i=15:$]
$X_{15}$ is reducible by Lemma \ref{nonsurj}, since the following map is nonsurjective and homotopic to the identity:
\[
f(x) = \begin{cases}
x+1 \pmod 5 & \text{if } x \in \{ 0,1,2,3,4\}, \\
3 & \text{if } x = 5, \\
0 & \text{if } x = 6. 
\end{cases}
\]
\end{itemize}
\end{proof}

\bibliographystyle{plain}

\begin{thebibliography}{99}
\bibitem{boxer94} Boxer, L., Digitally continuous functions. Pattern Recognition Letters 15 (1994), 833-839. 

\bibitem{boxer99} Boxer, L., A classical construction for the digital fundamental group. Journal of Mathematical Imaging and Vision 10 (1999) 51-62.

\bibitem{boxer05} Boxer, L., Properties of digital homotopy. Journal of Mathematical Imaging and Vision 22 (2005), 19--26.

\bibitem{boxer11} Boxer, Laurence; Karaca, Ismet; \"Oztel, Ahmet
Topological invariants in digital images. (English summary) J. Math. Sci. Adv. Appl. 11 (2011), no. 2, 109--140. 

\bibitem{bykov99} Bykov, Alexander I.; Zerkalov, Leonid G.; Pineda, Mario A. Rodriguez. Index of a point of 3-D digital binary image and algorithm for computing its Euler characteristic. Pattern Recognition 32 (1999) 845-850.

\bibitem{chen01} Chen, B., Yau, S-T., and Yeh, Y-N. Graph homotopy and Graham homotopy. Discrete Math., 241 (2001),153--170,

\bibitem{diestel10} Diestel, R. ``Graph Theory''. Springer Verlag. 2010.

\bibitem{dirac61} Dirac, G. A. On rigid circuit graphs. Abhandlungen aus dem Mathematischen Seminar der Universit\"at Hamburg 25 (1961) 71--76. 

\bibitem{ege14} Ege, Ozgur; Karaca, Ismet; Erden Ege, Meltem
Relative homology groups in digital images.
Appl. Math. Inf. Sci. 8 (2014), no. 5, 2337--2345. 

\bibitem{gray71} Gray, Stephen B. Local properties of binary images in two dimensions. IEEE Transactions on computers, vol C-20 (1971) no 5, 551--561.

\bibitem{gross65} Fulkerson, D. R.; Gross, O. A. Incidence matrices and interval graphs. Pacific Journal of Math. 15 (1965) 835--855.


\bibitem{knill12} Josellis, Frank; Knill, Oliver. The Lusternik-Schnirelmann theorem for graphs. \url{http://arxiv.org/abs/1211.0750v2}.

\bibitem{khalimsky90} Khalimsky, Efim; Kopperman, Ralph; Meyer, Paul R. Computer graphics and connected topologies on finite ordered sets. Topology Appl. 36 (1990), no. 1, 1--17. 

\bibitem{rosenfield73} Rosenfield, A. Arcs and Curves in Digital Pictures. Journal of the ACM 20.1 (1973): 81--87. Web.

\bibitem{slapal13} \u{S}lapal, Josef. Topological structuring of the digital plane. 
Discrete Math. Theor. Comput. Sci. 15 (2013), no. 2, 165--176. 
\end{thebibliography}

\end{document}